\documentclass[reqno]{amsart}
\usepackage{geometry}
\usepackage{hyperref}
\usepackage{xcolor}
\usepackage{tikz}
\usetikzlibrary{decorations.pathreplacing}
\usepackage[utf8]{inputenc}
\usepackage[english]{babel}
\usepackage{upgreek}
\usepackage{tipa}
\usepackage{lastpage}
\usepackage{float}
\usepackage[shortlabels]{enumitem}
\usepackage{subcaption}
\usepackage{multirow}
\usepackage{graphicx} 
\graphicspath{ {c:/Users/Owner/Desktop} }
\geometry{a4paper,total={170mm,257mm},left=25mm,right=25mm,top=30mm,bottom=30mm,}
\usepackage{amsmath}
\usepackage{amssymb}
\usepackage{dsfont}
\usepackage{amsthm}
\usepackage{thmtools}
\usepackage{thm-restate}
\allowdisplaybreaks
\theoremstyle{plain}
\newtheorem{thm}{Theorem}[section]
\newtheorem{lem}[thm]{Lemma}
\newtheorem{cor}[thm]{Corollary}

\theoremstyle{definition}

\newtheorem{example}[thm]{Example}
\newtheoremstyle{iremark}
  {\topsep}   
  {\topsep}   
  {\upshape}  
  {0pt}       
  {\itshape}  
  {.}         
  {5pt plus 1pt minus 1pt} 
  {\thmname{#1}\thmnumber{ \itshape#2}\thmnote{ (#3)}} 
\theoremstyle{iremark}
\newtheorem{remark}[thm]{Remark}
\usepackage{enumitem}

\makeatletter
\newcommand*{\defeq}{\mathrel{\rlap{%
                     \raisebox{0.3ex}{$\m@th\cdot$}}%
                     \raisebox{-0.3ex}{$\m@th\cdot$}}%
                    \, =}
\makeatother
\newcommand*{\comma}{,}
\newcommand*{\equal}{=}

\title{Second Class Particle Behaviour in Blocking ASEP}

\author{Daniel Adams} 
\address[Daniel Adams]{The Innovation Game}

\author{M\'arton Bal\'azs}
\address[M\'arton Bal\'azs]{School of Mathematics, University of Bristol, Woodland Road, Bristol, BS8 1UG, UK}
\email{m.balazs@bristol.ac.uk}

\author{Jessica Jay} 
\address[Jessica Jay] {School of Mathematical Sciences, Lancaster University, LA1 4YW, UK.}
\email{j.jay@lancaster.ac.uk}

\date{}

\begin{document}

\begin{abstract}
We consider any fixed $d\in\mathbb{Z}_{>0}$ number of second class particles in the asymmetric simple exclusion process (ASEP), constructed via a basic coupling of two ASEPs. We give the joint distribution of the positions of the second class particles and also the probability of there being a second class particle at a given site, under the natural blocking measure for ASEP. In order to find these distributions we use results about the number of particles in half-infinite and finite site ranges of ASEP. Our investigations also lead to probabilistic proofs of well-known combinatorial identities; the Durfee rectangles identity, Euler's identity, and the $q$-Binomial Theorem.
\end{abstract}

\maketitle
\section{Introduction}\label{intro}
The Asymmetric Simple Exclusion Process (ASEP) is a classic nearest-neighbour interacting particle system on $\mathbb{Z}$, introduced in 1970 by Spitzer \cite{spitzer}. In 1976, Liggett \cite{ligg_coupling} showed that ASEP has reversible stationary measures known as blocking measures. Blocking measures concentrate on a countable set of states where informally speaking an ASEP state can be split into blocks, an infinite block of empty sites to the left, an infinite block of full sites to the right and some block in the middle of empty and full sites. The infinite block of full sites  to the right means that any given particle can only drift so far to the right, i.e.\ it's motion is blocked, hence the name blocking measure. The simplest form of blocking measures are of product structure, however these are not ergodic as they possess a conserved quantity. Conditioning on this quantity gives rise to ergodic blocking measures.

These measures were further investigated for a family of interacting particle systems by Bal\'azs and Bowen in \cite{blocking}; here we return to ASEP only and extend the work of \cite{blocking} by considering the distribution of second class particles. Whilst second class particles have been well studied under non-reversible scenarios, the blocking case has been investigated much less. 

 Second class particles are natural coupling objects that arise from attractive interacting particle systems, like ASEP. The coupled particle systems are referred to as a multi-species particle system. Stationary measures of such systems have drawn lots of interest, for examples in the translation invariant case see Angel \cite{angel_2_species}, Ferrari and Martin \cite{ferr_mart_multi_species} and also, in terms of last passage percolation, Fan and Sepp\"al\"ainen \cite{fan_sepp_joint_bus}. In \cite{belitski_schutz_n_species_asep}, Belitsky and Sch\"utz constructed stationary measures for the $n$-species priority ASEP on a finite integer lattice, they also studied the hydrodynamic limit for this process. 
\par Informally, a two-species ASEP can be described in the following way. Both species of particles attempt to evolve as the single species ASEP would, i.e.\ obeying the exclusion rule within their own species. The two species can interact as follows, a first class particle may jump into the space of a second and hence swap position with the second class particle. However if a second class particle attempts to jump into the space of a first class particle the jump is blocked.

In this paper we give explicit formulas for the distribution of a fixed number of second class particles under both the product and ergodic blocking measures for ASEP. Along the way we identify the distribution for the number of first class particles in finite and semi-infinite intervals in both the product and ergodic blocking scenarios. The well known particle-hole symmetry of ASEP takes an interesting form in this context which we also formulate.  

A.\ Bufetov and K.\ Chen, \cite{bufetov_chen}, about the same time as our manuscript, achieved distributional results on second and higher class particles via arguments involving the Mallows measure. Instead, our second class particle arguments rely on an interpretation of the two species ASEP as a single species ASEP with a second ASEP on the particle labels using the first as a dynamic background. It turns out that both these ASEPs are in their stationary blocking distributions, which allows us to perform calculations. Both for the distribution of the number of particles and positions of second class particles, we use probabilistic arguments that lead to various recursions. Finding the solutions of these gives rise to our proofs. 

Blocking measures have a special algebraic structure which allowed Bal\'azs and Bowen \cite{blocking} to derive a probabilistic proof of the Jacobi triple product identity. In that paper the authors considered ASEP and its equivalent description in terms of the Asymmetric Zero-Range Process. Comparing the blocking measures for each process led to their result. In 2022, this method was generalised by Bal\'azs, Fretwell and Jay \cite{MDJ}, where new identities of combinatorial significance were proved by considering blocking measures of more general particle systems. Other combinatorial identities were found by Jay and Lees \cite{jay_lees_ising}, by considering a family of inhomogeneous Ising chains and equivalent family of nearest neighbour particle systems.

As a byproduct of our methods we give purely probabilistic proofs to further well-known and non-trivial combinatorial identities. The ones we cover are the Durfee rectangles identity, Euler's identity, the $q$-Binomial Theorem and an identity relating to Dyson's crank, each fundamental to the theory of integer partitions. Probabilistic methods are certainly not the common way of proving such identities, which gives a unique perspective to this paper.

\subsection{Notation}\label{subsect: notation}~
\par Throughout the paper we will use the $q$-Pochhammer symbol as well as the $q$-Binomial coefficient. For some $a\in\mathbb{R}$ and $n \in \mathbb{Z}_{>0}$, the $q$-Pochhammer symbol is given by
$(a;q)_n\defeq \prod\limits_{i=0}^{n-1}(1-aq^i)$.
Also we set, $(a;q)_0\defeq 1$ and $(a;q)_\infty\defeq\prod\limits_{i=0}^{\infty}(1-aq^i)$.
The $q$-Binomial coefficient is then, 
$$\begin{bmatrix}
n\\
m
\end{bmatrix}_q\defeq \frac{(q;q)_n}{(q;q)_m(q;q)_{n-m}}=\prod\limits_{i=0}^{m-1}\frac{(1-q^{n-i})}{(1-q^{i+1})}.$$
Sometimes we will use the shorthand notation $(x)^+$ to mean $\max\{x,0\}$.
\subsection{Results}\label{subsect: intro- results}~
\subsubsection{Distributional results for single species ASEP }\label{subsubsect: intro - dist results}~
\vspace{3mm}\par There is a one-parameter ($c\in\mathbb{R}$) family of stationary reversible measures for ASEP, called blocking measures. These measures are product measures where the occupation at each site is distributed as an independent Bernoulli with parameter dependent on the asymmetry parameter $q\in(0,1)$ and the site $i$.  In particular, the probability a site $i$ is occupied is given by $\frac{q^{c-i}}{1+q^{c-i}}$.  We denote these blocking measures by $\underline{\mu}^c$ for any $c\in\mathbb R$. We also let
$N^{\overset{\leftarrow}{p}}_{m+\frac{1}{2}}(\underline{z})$ denote the number of particles at and to the left of site $m$ in the state $\underline{z}$. We will derive the distribution under the blocking measures of a given number of particles on a half infinite volume, i.e. on the sites $(-\infty,m]$ for some $m\in\mathbb{Z}$.
\begin{restatable}{thm}{partL}\label{thm: half infinite asep particles}
For any $m \in \mathbb{Z}$, $k\in\mathbb{Z}_{\geq 0}$ and $c \in \mathbb{R}$, 
$$\underline{\mu}^c(\{N^{\overset{\leftarrow}{p}}_{m+\frac{1}{2}}(\underline{z})=k\})=\frac{q^{k(c-m)+\frac{k(k-1)}{2}}}{(q;q)_k(-q^{c-m};q)_\infty}.$$
\end{restatable}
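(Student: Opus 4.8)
The plan is to exploit that $\underline{\mu}^c$ is a \emph{product} measure, so that $N^{\overset{\leftarrow}{p}}_{m+\frac12}$ is a sum of independent indicators and its law can be computed site by site. First I would recall the structure of the blocking measure: the occupation variables $(\eta_i)_{i\in\mathbb{Z}}$ are independent with occupation odds
$\underline{\mu}^c(\eta_i=1)/\underline{\mu}^c(\eta_i=0)=q^{c-i}$, so $\underline{\mu}^c(\eta_i=1)=q^{c-i}/(1+q^{c-i})$. Since $q\in(0,1)$ the particle density decays to $0$ as $i\to-\infty$, which makes $N^{\overset{\leftarrow}{p}}_{m+\frac12}(\underline{z})=\sum_{i\le m}\eta_i$ finite almost surely; this is the variable whose distribution we must identify.

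Next I would re-index the sites at and to the left of $m$ by $\ell=m-i\ge 0$ and set $t\defeq q^{c-m}$, so that the occupation odds at the $\ell$-th such site are exactly $tq^{\ell}$. Thus $N^{\overset{\leftarrow}{p}}_{m+\frac12}$ counts the successes among independent Bernoulli trials with success odds forming the geometric sequence $t,tq,tq^2,\dots$. Pulling out the product of all the failure probabilities, $\prod_{\ell\ge0}(1+tq^\ell)^{-1}=(-t;q)_\infty^{-1}=(-q^{c-m};q)_\infty^{-1}$, produces the announced denominator and reduces the claim to showing that the sum, over all size-$k$ subsets, of the product of the corresponding odds equals $t^k q^{k(k-1)/2}/(q;q)_k$; equivalently, writing $e_k$ for the $k$-th elementary symmetric function, that $e_k(1,q,q^2,\dots)=q^{k(k-1)/2}/(q;q)_k$.

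The evaluation of this symmetric function is the crux of the argument, and I would carry it out by a probabilistic recursion in the spirit of the paper. Conditioning on whether site $m$ (the trial of odds $t$) is occupied decouples the remaining trials, whose odds $tq,tq^2,\dots$ are the same geometric progression rescaled by $q$. Writing $P_k(t)\defeq\underline{\mu}^c(N^{\overset{\leftarrow}{p}}_{m+\frac12}=k)$ as a function of $t$, this gives $P_k(t)=\tfrac{1}{1+t}P_k(tq)+\tfrac{t}{1+t}P_{k-1}(tq)$, with base case $P_0(t)=(-t;q)_\infty^{-1}$ from the all-empty event. I would then verify by induction on $k$ that $P_k(t)=t^k q^{k(k-1)/2}/\big((q;q)_k(-t;q)_\infty\big)$ solves this recursion, using the factorisations $(-t;q)_\infty=(1+t)(-tq;q)_\infty$, $(q;q)_k=(1-q^k)(q;q)_{k-1}$, and $\binom{k}{2}=\binom{k-1}{2}+(k-1)$; the inductive step collapses to the identity $q^{k}q^{\binom k2}/(q;q)_k+q^{k-1}q^{\binom{k-1}2}/(q;q)_{k-1}=q^{\binom k2}/(q;q)_k$. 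Substituting $t=q^{c-m}$ then yields the stated formula. The only genuine obstacle is the bookkeeping in this induction; everything else follows directly from the product structure of $\underline{\mu}^c$.

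Alternatively I could evaluate $e_k(1,q,q^2,\dots)$ outright via the substitution $\ell_j=(j-1)+a_j$, which sends strictly increasing indices $0\le\ell_1<\cdots<\ell_k$ to a weakly increasing sequence $0\le a_1\le\cdots\le a_k$, contributing the staircase factor $q^{\binom k2}$ together with $1/(q;q)_k$ as the generating function for partitions into at most $k$ parts; the recursion, however, keeps the proof self-contained and probabilistic. As a consistency check, summing the resulting masses over $k$ forces $\sum_{k\ge0}t^k q^{k(k-1)/2}/(q;q)_k=(-t;q)_\infty$, which is exactly Euler's identity (Theorem \ref{thrm: Euler}) at $z=t$, confirming that the total mass is $1$.
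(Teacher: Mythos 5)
Your argument is correct and is essentially the paper's own proof: conditioning on the occupation of site $m$ produces exactly the recursion used there, with your substitution $t\mapsto tq$ (for $t=q^{c-m}$) playing the role of the paper's shift lemmas (Corollary \ref{cor: shift in c shifts the limits in N} and Lemma \ref{lem: c shift in mu and N p,h}), and the resulting one-step recursion in $k$ is solved from the same explicit $k=0$ base case. Your alternative evaluation of $e_k(1,q,q^2,\dots)$ via the staircase substitution is likewise the combinatorial argument the paper records in Appendix \ref{appendix: combinatorics}.
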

\vspace{3mm}\par It is well known that the following quantity is conserved by ASEP dynamics and is finite $\underline{\mu}^c$-a.s.,
$$N(\underline{z})=\sum\limits_{i=1}^\infty (1-z_i) -\sum\limits_{i=-\infty}^0 z_i.$$ This gives an ergodic decomposition of the state space (into states of a given conserved quantity value), and we denote the unique ergodic stationary measure on the set of states with conserved quantity $n$ by $\underline{\nu}^n(\cdot)=\underline{\mu}^c(\cdot | \{N=n\})$. We will also derive the distribution of the number of particles on a half-infinite volume under the ergodic measures $\underline{\nu}^n$ for any $n\in\mathbb{Z}$.

\begin{restatable}{thm}{nupartL}\label{thm: half infinite ASEP particles under nu n}~
    \par \noindent For any $n,m,k \in \mathbb{Z}$, such that $k\geq (m-n)^+$,
    
    $$\underline{\nu}^n(\{N^{\overset{\leftarrow}{p}}_{m+\frac{1}{2}}(\underline{z})=k\})=\frac{ q^{k(n-m+k)}(q;q)_\infty}{(q;q)_k(q;q)_{n-m+k}}.$$    
\end{restatable}
\par  We also consider the distribution under the measures $\underline{\mu}^c$ and $\underline{\nu}^n$ of a given number of particles in a finite volume, i.e.\ in the site range $[m_1+1,m_2-1]$ for any $m_1+1<m_2 \in \mathbb{Z}$. 
\begin{restatable}{lem}{finparts}\label{lem: finite asep particles}
For any $m_1+1<m_2 \in \mathbb{Z},c \in \mathbb{R}$ and $k \in \{0,1,...,\hat{m}_2\}$ where $\hat{m}_2:=m_2-m_1-1$, 
$$\underline{\mu}^c(\{N^{\overset{\leftarrow}{p}}_{m_2-1+\frac{1}{2}}(\underline{z})-N^{\overset{\leftarrow}{p}}_{m_1+\frac{1}{2}}(\underline{z})=k\})=\frac{q^{k(c+1-m_2)+\frac{k(k-1)}{2}}}{(-q^{c-m_2+1};q)_{\hat{m}_2}}\begin{bmatrix}
    \hat{m}_2 \\
    k
    \end{bmatrix}_{q}.$$
    
\end{restatable}

\begin{restatable}{lem}{nufinparts}\label{lem: finite ASEP particles under nu n}
    For any $n\in\mathbb{Z}$, any $m_1+1<m_2 \in \mathbb{Z},c \in \mathbb{R}$ and $k \in \{0,1,...,\hat{m}_2\}$ where $\hat{m}_2:=m_2-m_1-1$, 
    \begin{align*}
        \underline{\nu}^n(\{N^{\overset{\leftarrow}{p}}_{m_2-1+\frac{1}{2}}(\underline{z})&-N^{\overset{\leftarrow}{p}}_{m_1+\frac{1}{2}}(\underline{z})=k\})\\
        &=q^{k(n+1+k-m_2)}(q;q)_\infty\cdot\begin{bmatrix}
        \hat{m}_2 \\
        k
        \end{bmatrix}_{q}\cdot \sum\limits_{j=(m_2-1-n-k)^+}^\infty\frac{q^{j(n+1-m_2+k+j)+\hat{m}_2j}}{(q;q)_j(q;q)_{n+1-m_2+k+j}}.
    \end{align*}
    
\end{restatable}

\subsubsection{Distributional results for two species ASEP}\label{subsubsect: intro - second cp results}~
\vspace{3mm}\par We define an ASEP with some $d>0$ second class particles by considering a basic coupling of two ASEPs, $\{\underline{\eta},\underline{\xi}\}$.
At any time, $\underline{\eta}$ and $\underline{\xi}$ differ in exactly $d$ places (at these sites $\xi_i>\eta_i$), these differences define the $d$ second class particles. In this way we think of the process $\underline{\xi}$ as an ASEP with $d$ second class particles. Using this coupling we can give an alternative proof of Theorem \ref{thm: half infinite asep particles} (see Section \ref{subsect: asep label process}). This proof follows by studying the stationary distribution of the label process for the $d$ second class particles (given in Theorem \ref{thrm: label process stationary dist}). The label of a particle is defined to be the number of particles to its left, which by the blocking nature of the process will be finite. In order to compute the stationary distribution of the label process we use probabilistic and dynamic arguments. First we show that the label process can be viewed as the positions of particles in an ASEP on $\mathbb{Z}_{\geq 0}$ with exactly $d$ particles (meaning closed left boundary). The dynamics of this ASEP depends on the state of $\underline{\xi}$, jumps are suppressed if the particles with the corresponding labels in $\underline{\xi}$ are not nearest neighbours. We then use the fact that if a Markov chain has a reversible stationary measure, the cut Markov chain (where some edges in its transition diagram are removed) is also reversible stationary with respect to that distribution.
\par Using the basic coupling, we find the probability that a second class particle is at a given site, $m$. 
\begin{restatable}{thm}{scpm}\label{thrm: second class particle at a given site}
Assume that the coupled system is stationary and $\underline{\xi}\sim\underline{\mu}^c$. For a given $m\in\mathbb{Z}$, we find a second class particle at site $m$ with probability, 
$$\mathbb{P}(\xi_m>\eta_m)=\frac{(1-q^d)q^{c-m}}{(1+q^{c-m})(1+q^{c+d-m})}.$$
\end{restatable}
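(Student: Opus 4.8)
The plan is to exploit the pointwise ordering $\underline{\eta}\le\underline{\xi}$ produced by the basic coupling, which collapses the target probability into a difference of two single-site blocking marginals. Since $\eta_m=1$ forces $\xi_m=1$, the event $\{\xi_m>\eta_m\}$ is exactly $\{\xi_m=1\}\setminus\{\eta_m=1\}$, so that
$$\mathbb{P}(\xi_m>\eta_m)=\mathbb{P}(\xi_m=1)-\mathbb{P}(\eta_m=1).$$
Because $\underline{\xi}\sim\underline{\mu}^c$, the first term is the single-site marginal of the product blocking measure, $\mathbb{P}(\xi_m=1)=q^{c-m}/(1+q^{c-m})$. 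The whole problem therefore reduces to identifying the one-site marginal of the $\underline{\eta}$-component of the stationary coupled measure.

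The key step is to show that this marginal is again a blocking marginal, but with a shifted parameter, namely $\underline{\eta}\sim\underline{\mu}^{c+d}$ at the level of one-site laws, giving $\mathbb{P}(\eta_m=1)=q^{c+d-m}/(1+q^{c+d-m})$. Under the basic coupling the $\underline{\eta}$-marginal is itself a stationary single-species ASEP, and it inherits the blocking boundary behaviour of $\underline{\xi}$ (empty far to the left, full far to the right, the two configurations agreeing outside the finite region carrying the $d$ second class particles); hence its single-site laws are those of some $\underline{\mu}^{c'}$. To fix the parameter I posit $c'=c+d$ and verify it through the conserved number of second class particles. Writing $h_i=1/(1+q^{c-i})=\underline{\mu}^c(\xi_i=0)$, the one-site marginals give $\mathbb{P}(\xi_i=1)-\mathbb{P}(\eta_i=1)=h_{i-d}-h_i$, and summing over $i\in\mathbb{Z}$ telescopes to $d$: the $d$ surviving boundary terms tend to $1$ as $i\to-\infty$ and to $0$ as $i\to+\infty$, matching the prescribed number of second class particles. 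The cleanest conceptual justification is the one flagged in the introduction: viewed on the lattice of $\underline{\xi}$-particle labels, the second class particles form a second ASEP that is itself in its blocking stationary distribution, and excising these $d$ labelled particles shifts the interface of $\underline{\xi}$ by exactly $d$ sites, carrying $\underline{\mu}^c$ into $\underline{\mu}^{c+d}$.

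With both marginals in hand the computation closes at once:
$$\mathbb{P}(\xi_m>\eta_m)=\frac{q^{c-m}}{1+q^{c-m}}-\frac{q^{c+d-m}}{1+q^{c+d-m}}=\frac{(1-q^d)\,q^{c-m}}{(1+q^{c-m})(1+q^{c+d-m})},$$
the last equality being the partial fraction identity $\tfrac{(1-q^d)t}{(1+t)(1+q^dt)}=\tfrac{1}{1+q^dt}-\tfrac{1}{1+t}$ read with $t=q^{c-m}$.

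The main obstacle is the middle step: rigorously establishing that the $\underline{\eta}$-marginal of the stationary coupled measure is exactly the blocking law with parameter $c+d$. Two points need care. First, one must justify that the $\underline{\eta}$-component is genuinely a (product) blocking measure rather than merely some stationary ASEP law with the correct asymptotics; this is precisely where the interpretation of the label dynamics as a second blocking ASEP does the work. Second, the parameter matching must be made rigorous, i.e.\ that the conserved count of second class particles equals $d$ exactly and forces $c'=c+d$; the only subtlety there is the interchange of summation and expectation and the evaluation of the telescoping boundary terms, both controlled by the geometric decay of the blocking marginals as $i\to-\infty$. As a fallback and cross-check I would keep a more computational route: condition on the label of the $\underline{\xi}$-particle at $m$, combine the half-infinite particle counts of Theorem \ref{thm: half infinite asep particles} with the blocking law of the label ASEP to write $\mathbb{P}(\eta_m=0\mid\xi_m=1)$ as an explicit $q$-series, and resum it via the $q$-Binomial Theorem.
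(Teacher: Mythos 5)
Your proposal is correct and follows essentially the same route as the paper: reduce $\mathbb{P}(\xi_m>\eta_m)$ to $\mathbb{P}(\xi_m=1)-\mathbb{P}(\eta_m=1)$ via the coupling relation $\underline{\eta}=\underline{\xi}-\sum_{j}\underline{\delta}_{X_j}$, identify the $\underline{\eta}$-marginal as $\underline{\mu}^{c+d}$, and subtract the two single-site blocking marginals. The middle step you flag as the main obstacle is exactly Lemma \ref{lem: mu^c to mu^c-d} of the paper (proved there by conditioning on the conserved quantity, using $N(\underline{\xi})=N(\underline{\eta})-d$ together with the shift Lemma \ref{lem: shift by tau is a shift in c}), so citing it closes the argument; note that your telescoping computation is only a consistency check, since it already presupposes the claimed form of the $\underline{\eta}$-marginal.
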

We also give this distribution under the ergodic measure $\underline{\nu}^n$.
\begin{restatable}{prop}{nuscpm}\label{prop: second class particle at a given site nu^n}
    Assume that the coupled system is stationary and $\underline{\xi}\sim\underline{\nu}^n$. For a given $m\in\mathbb{Z}$, we find a second class particle at site $m$ with probability, 
$$\mathbb{P}_{\underline{\nu}^n}(\xi_m>\eta_m)=(q;q)_\infty\left\{\sum\limits_{k=(m-n-1)^+}^\infty\frac{q^{(k+1)(n-m+k+1)}}{(q;q)_k(q;q)_{n-m+k+1}}-\sum\limits_{k=(m-n-d-1)^+}^\infty\frac{q^{(k+1)(n+d-m+k+1)}}{(q;q)_k(q;q)_{n+d-m+k+1}}\right\}.$$
\end{restatable}
We consider the case when there is a single second class particle ($d=1$). By conditioning on the number of particles to the left of the second class particle (i.e.\ it's label) we find another formula for the distribution of the second class particles position under the ergodic measure $\underline{\nu}^n$.
\begin{restatable}{prop}{nuscpmother}\label{prop: position of single 2cp under nu}
    Consider the coupling $(\underline{\eta},\underline{\xi})$ when $d=1$ (i.e.\ a single second class particle in ASEP). Assume the coupled system is stationary with $\underline{\xi} \sim \underline{\nu}^n$. Then for any $m\in\mathbb{Z}$ the probability that the second particle is at site $m$ is, 
    $$\mathbb{P}_{\underline{\nu}^n}(X=m)=(1-q)(q;q)_\infty \sum\limits_{k=(m-n-1)^+}^\infty \frac{q^{k(n-m+k+1)+k+(n-m+k+1)}}{(q;q)_k(q;q)_{n-m+k+1}}.$$
\end{restatable}

 We also consider the stationary distribution of the $d$ second class particle positions process, for any $d>0$. When the ASEP with $d$ second class particles is stationary under the blocking measure $\underline{\mu}^c$ we find the following.
\begin{restatable}{thm}{scpX}\label{thrm: second class positions dist}
Assuming that the coupled system is stationary and $\underline{\xi}\sim \underline{\mu}^c$, the stationary distribution of $\underline{X}$, the position of $d>0$ second class particles, is given by, 
$$\mathbb{P}(\underline{X}=\underline{m})=\frac{\prod\limits_{i=1}^d(1-q^i)\cdot q^{dc-\sum\limits_{j=1}^dm_j}}{\prod\limits_{j=1}^d(1+q^{c+d-j-m_j})(1+q^{c+d+1-j-m_j})}, \hspace{5mm} \text{for all } m_1<m_2<...<m_d\in\mathbb{Z}.$$
\end{restatable}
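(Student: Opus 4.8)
The plan is to reduce the $d$-dimensional statement to the single-site result of Theorem~\ref{thrm: second class particle at a given site} by a spatial recursion in the number $d$ of second class particles. The organising principle is the nested description recalled in the introduction: in the stationary coupling the total configuration $\underline{\xi}$ is a single-species ASEP distributed as $\underline{\mu}^c$, and (as is used in deriving Theorem~\ref{thrm: second class particle at a given site}) the first-class configuration $\underline{\eta}$ is then distributed as $\underline{\mu}^{c+d}$, the shift by $d$ reflecting that $\underline{\eta}$ carries exactly $d$ fewer particles than $\underline{\xi}$. Because $\underline{\mu}^c$ is a product measure, conditioning on the configuration at and to the left of any site leaves the restriction to the sites strictly to its right again a product blocking measure with the same parameter $c$; this is the spatial regeneration I will exploit.

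First I would condition on the position $X_1=m_1$ of the leftmost second class particle. I claim that, given this event, the configuration on $\{m_1+1,m_1+2,\dots\}$ is again a stationary coupled system whose total layer is distributed as $\underline{\mu}^c$ but which now carries only $d-1$ second class particles; consequently its first-class layer is distributed as $\underline{\mu}^{c+(d-1)}$ and the remaining positions $(X_2,\dots,X_d)$, supported on the sites to the right of $m_1$, follow the law $\mathbb{P}^c_{d-1}$ of the statement with $d$ replaced by $d-1$. Granting this, the remaining task is to compute the conditional weight carried by the leftmost particle itself, namely
\[
g^c_d(m_1)=(1-q^d)\,\frac{q^{c-m_1}}{(1+q^{c+d-1-m_1})(1+q^{c+d-m_1})},
\]
which I would extract from the product blocking marginals at $m_1$ (occupied in $\underline{\xi}$, empty in $\underline{\eta}$) together with the requirement that $m_1$ be the \emph{leftmost} discrepancy; the factor $(1-q^d)$ is the same one appearing in Theorem~\ref{thrm: second class particle at a given site}. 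This yields the recursion
\[
\mathbb{P}(\underline{X}=\underline{m})=g^c_d(m_1)\,\mathbb{P}^c_{d-1}(m_2,\dots,m_d),
\qquad m_1<m_2<\dots<m_d .
\]

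Iterating the recursion down to the base case $d=1$, which is exactly Theorem~\ref{thrm: second class particle at a given site}, telescopes the parameters: peeling the $r$-th particle contributes the factor $g^c_{d-r+1}(m_r)$, so that the product $\prod_{i=1}^d(1-q^i)$ and the staircase of shifts $c+d-j$ in the denominator both assemble automatically, reproducing the claimed closed form. As a consistency check I would verify $\sum_{m_1<\dots<m_d}\mathbb{P}(\underline{X}=\underline{m})=1$; after the telescoping I would expect this normalisation to reduce to a nested summation that collapses through Euler's identity (Theorem~\ref{thrm: Euler}) and the $q$-Binomial theorem (Theorem~\ref{thrm: q-Bin}), tying the distributional result back to the partition identities of this paper.

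The main obstacle is the regeneration claim used to open the recursion: the two coupled processes $\underline{\xi}$ and $\underline{\eta}$ are emphatically \emph{not} independent, so it is not immediate that conditioning on the leftmost second class particle yields a genuinely lower-rank stationary coupled system with the parameter $c$ unchanged. Establishing this requires the spatial Markov property of the joint stationary measure and the precise way the two blocking layers interlock, and it is here that the nested-ASEP structure must be invoked carefully rather than formally. The bookkeeping of which parameter shifts, with $c$ staying fixed for $\underline{\xi}$ while $\underline{\eta}$ passes from $c+d$ to $c+d-1$, is exactly what makes the recursion close.
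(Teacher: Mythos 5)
Your decomposition by the leftmost second class particle is a workable idea in principle --- the regeneration you need does hold, because the label process $\underline{x}$ is independent of $\underline{\xi}$, the measure $\underline{\mu}^c$ is product, and $\underline{\pi}$ has the memoryless property $\underline{\pi}(\,\cdot\mid x_1=k)\,$ $=\underline{\pi}_{d-1}$ after recentring. But both factors in your recursion are misidentified, and this is a genuine gap rather than a presentational one. First, the quantity $g^c_d(m_1)$ you write down is the probability from Theorem \ref{thrm: second class particle at a given site} that \emph{some} second class particle occupies $m_1$, i.e.\ $\mathbb{P}(\xi_{m_1}>\eta_{m_1})$; for $d\ge2$ this is strictly larger than $\mathbb{P}(X_1=m_1)$. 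A direct computation (condition on $x_1=k$, which has marginal $(1-q^d)q^{dk}$ under $\underline{\pi}$, apply Theorem \ref{thm: half infinite asep particles} and Euler's identity) gives $\mathbb{P}(X_1=m_1)=(1-q^d)q^{c-m_1}\big/\prod_{i=0}^{d}(1+q^{c+i-m_1})$, which has $d+1$ factors in the denominator, not two. Second, the conditional law of $(X_2,\dots,X_d)$ given $X_1=m_1$ is a \emph{half-line} object: the relevant particle counts are those in the finite windows $(m_1,m_2),(m_2,m_3),\dots$, governed by Lemma \ref{lem: finite asep particles}, not the full-line counts behind $\mathbb{P}^c_{d-1}$. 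Already for $d-1=1$ one finds that this conditional law equals $\mathbb{P}^c_1(m_2)\cdot(1+q^{c-m_1})$, so it is not the law in the statement with $d$ replaced by $d-1$. The two discrepancies cancel in the product --- the factorisation $\mathbb{P}^c_d(\underline{m})=g^c_d(m_1)\,\mathbb{P}^c_{d-1}(m_2,\dots,m_d)$ is a true identity of the closed forms --- but as a proof strategy this is circular: neither factor is the probabilistic quantity you assign to it, so the identity cannot be established by the conditioning you describe, and the induction does not close on the theorem's statement. To make the recursion honest you would have to take as induction hypothesis a separate formula for the half-line system with its left endpoint as an extra parameter, which you have not set up.

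For comparison, the paper's proof carries out what is in effect the corrected, non-recursive version of your programme in one step: it conditions on the entire label vector $\underline{k}$ at once (using the independence of $\underline{\pi}$ from $\underline{\xi}$), computes $\underline{\mu}^c$ of the joint event $\{\xi_{m_i}=1,\ \sum_{j<m_i}\xi_j=k_i\}$ by exploiting the product structure to factor it into one half-infinite block (Theorem \ref{thm: half infinite asep particles}) and $d-1$ finite blocks between consecutive positions (Lemma \ref{lem: finite asep particles}), and then resums over $\underline{k}$ using Euler's identity for the unbounded sum and the $q$-Binomial Theorem for each bounded one --- exactly the identities your final consistency check anticipates. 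I would encourage you to redo the computation along those lines, or alternatively to formulate and prove the half-line analogue of the theorem so that your peeling argument genuinely closes.
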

\par \noindent Of course, when $d=1$ Theorems \ref{thrm: second class particle at a given site} and \ref{thrm: second class positions dist} are the same. In this case, the position of a single second class particle under $\underline{\mu}^c$ is distributed according to $-Y$ where $Y$ is a discrete logistic with parameters $q$ and $-c$ (introduced by Chakraborty and Chakravarty in 2016 \cite{discrete_logistic}). 
\par In order to prove Theorem \ref{thrm: second class positions dist}, we first condition on the label process for the second class particles. Then we split states up into the sites to the left of site $m_1$ and for $j\in\{1,...,d-1\}$ the sites between $m_j$ and $m_{j+1}$. From here we use the distributional results for particles in half infinite and finite volumes to complete the proof.

\par By following similar reasoning we also find the joint distribution of the position of any $d>0$ given second class particles in ASEP under the ergodic measures $\underline{\nu}^n$ (for any $n\in\mathbb{Z}$); that is conditioned on the of the conserved quantity $N(\underline{\xi})=n$.
\begin{restatable}{prop}{dundernu}\label{prop: d 2cps under nu}
    Assume that the coupled system is stationary and $\underline{\xi}\sim \underline{\nu}^n$. For any $\underline{m}=(m_1,\cdots,m_d)$ such that $m_1<m_2<\cdots m_d\in\mathbb{Z}$, the probability that the positions of the second class particles, $\underline{X}$, is $\underline{m}$ is given by,
\begin{align*}
    \mathbb{P}_{\underline{\nu}^n}(\underline{X}=\underline{m})=q^{-\sum\limits_{i=1}^dm_i}(q;q)_\infty\prod\limits_{i=1}^d(1-q^i)\sum\limits_{\substack{\underline{k}\in Z_+^d:\\\forall j\in\{1,2,\cdots,d\},\\ k_j\geq m_j-n-1}} &\frac{q^{\frac{k_1(k_1+1)}{2}+k_1(d-m_1)+\frac{k_d(k_d+1)}{2}+(k_d+1)(n+1)}}{(q;q)_{n-m_d+k_d+1}(q;q)_{k_1}}\\
    &\hspace{10mm} \cdot\prod\limits_{j=2}^dq^{\hat{k}_j(d+1-j-m_j)+\frac{\hat{k}_j(\hat{k}_j+1)}{2}}\cdot \begin{bmatrix}
        \hat{m}_j\\
        \hat{k}_j
    \end{bmatrix}_q,
\end{align*}
where $\hat{m}_j\defeq m_j-m_{j-1}-1$ and $\hat{k}_j\defeq k_j-k_{j-1}-1$ for each $j\in\{2,\cdots,d\}$.
\end{restatable}

\subsubsection{Combinatorial Identities}\label{subsubsect: intro - identities}~
\par \vspace{3mm}  By considering very natural distributional questions for single species ASEP (including those from Section \ref{subsubsect: intro - dist results}), under its natural blocking measures, we give probabilistic proofs to classical combinatorial identities (Section \ref{subsect: identities}). For completeness, we also discuss these identities' combinatorial meanings in Section \ref{sect: combinatorics}. 
\par In Section \ref{subsect: ASEP symmetry}, we look at the symmetry between particles and holes in ASEP under its blocking measure. Using this symmetry we prove a well-known combinatorial identity, the Durfee rectangles identity.
\begin{restatable}{thm}{durfee}\emph{\textbf{Durfee Rectangles Identity }(for example see equation (4) in Gessel \cite{Ges_durfee_rectangle})}
\label{thrm: identity from symmetry}
\par \noindent For $q\in(0,1)$ and any fixed $n\in\mathbb{Z}$,
    $$\frac{1}{(q;q)_\infty}=\sum\limits_{k=\max\{-n,0\}}^\infty\frac{q^{k(n+k)}}{(q;q)_{n+k}(q;q)_k}.$$
\end{restatable}
\par \noindent Combinatorially this identity identity says that for a fixed integer $n$, any integer partition has a maximal rectangle of side lengths $n+k$ and $k$ (for some $k\geq \max\{-n,0\}$) in its Ferrers diagram known as its Durfee rectangle. When $n=0$, this identity is the well-known Durfee square identity (for example see Andrews and Eriksson \cite{integer_partitions}).
\par In Section \ref{subsect: ASEP dist results} we give the distribution of a single site under the ergodic measure $\underline{\nu}^n$ for any $n$. Using this distribution we prove the following combinatorial identity that looks similar to the Durfee rectangle identity.
\newpage\begin{restatable}{prop}{crankiden}\label{prop: identitiy related to durfee}~
    \par \noindent For any $n\in\mathbb{Z}$, $q\in(0,1)$ we have that, 
    $$\sum\limits_{k=\max\{-n,0\}}^\infty\frac{q^{(k+1)(n+k)}}{(q;q)_k(q;q)_{n+k}}+\sum\limits_{k=\max\{1-n,0\}}^\infty\frac{q^{k(n+k)}}{(q;q)_k(q;q)_{n-1+k}}=\frac{1}{(q;q)_\infty}.$$
\end{restatable}
In combinatorics this identity is an equivalence of generating functions for integer partitions by splitting them depending on the value of a certain associated quantity called the crank (this is discussed more in Section \ref{sect: combinatorics}). 
\par\noindent  We also prove the following results. 
\par \begin{restatable}{thm}{euler}\emph{\textbf{Euler's Identity }(for example see equation E1 in Andrews \cite{Andrews_jacobi})}\label{thrm: Euler}
\par \noindent For $q\in(0,1)$ and $z\in\mathbb{R}_{>0}$, 
$$\sum\limits_{k=0}^\infty \frac{q^{\frac{k(k-1)}{2}}z^k}{(q;q)_k}=(-z;q)_\infty.$$
\end{restatable}
\par\vspace{2mm} \noindent In combinatorics, this identity gives equivalent ways of writing the generating function for integer partitions into distinct parts. In a general term of the expansion, the power of $z$ gives the number of parts in the partition.

\begin{restatable}{thm}{qbin}\emph{\textbf{$q$-Binomial Theorem} (Heine \cite{heine}, also Andrews \cite{andrews_q_bin})}\label{thrm: q-Bin}
\par \noindent For $q\in(0,1)$, $z\in\mathbb{R}_{>0}$ and any $m \in \mathbb{Z}_{\geq 0}$, 
$$\sum\limits_{k=0}^mq^{\frac{k(k-1)}{2}}z^k\begin{bmatrix}
m\\
k
\end{bmatrix}_{q}=(-z;q)_m.$$
\end{restatable}
\par\vspace{2mm} \noindent This identity gives equivalent forms of the generating function for integer partitions into distinct parts of size at most $m$. Again here in a general term in the expansion, the power of $z$ is the number of parts in the partition. Euler's identity can be thought of as the limiting identity when we take $m\rightarrow\infty$ in the $q$-Binomial theorem. 
\par The proofs we give here are purely probabilistic using the blocking measures for ASEP and give natural probabilistic interpretations to these combinatorial identities. It is the probabilistic nature that gives the restriction on the values of $p$ and $z$ in the above identities; we note that these can be extended by analytic continuation.

\section*{Acknowledgements}
\par \vspace{1mm} \noindent The authors would like to thank Dan Fretwell for helpful discussions regarding viewing particle states as integer partitions and the combinatorial identities we have found. Also many thanks to Alexey Bufetov for discussions about the multi-species ASEP. Thank you to Edward Crane for highlighting the link between our results (Corollaries \ref{cor: 1 displacement} and \ref{cor: d displacements}) and the results of Gnedin and Olshanski \cite{gnedin_olshanski}. We would also like to thank an anonymous referee for helpful comments and suggestions. 
\par \noindent M.\ Bal\'azs was partially supported by the EPSRC EP/R021449/1 and EP/W032112/1 Standard Grants of the UK. J.\ Jay was partially funded by the Heilbronn Institute for Mathematical Research.
\par\noindent This study did not involve any underlying data.

\section{The asymmetric simple exclusion process (ASEP)}\label{sect: ASEP}
The Asymmetric Simple Exclusion Process (ASEP) is a well known nearest neighbour particles system, introduced by Spitzer \cite{spitzer}. In this paper we will consider ASEP on some state space $\Omega\subset\{0,1\}^\mathbb{Z}$.
\par To define $\Omega$ we consider the following functions $N_{m+\frac{1}{2}}^{\overset{\leftarrow}{p}},N_{m+\frac{1}{2}}^{\overset{\rightarrow}{h}}:\{0,1\}^\mathbb{Z} \rightarrow \mathbb{Z}_{\geq 0}\cup \{\infty\}$  for any $m \in \mathbb{Z}$ defined by,
$$N_{m+\frac{1}{2}}^{\overset{\leftarrow}{p}}(\underline{\eta})\defeq \sum\limits_{i=\ell}^m \eta_i\hspace{5mm} \textrm{ and } \hspace{5mm} N_{m+\frac{1}{2}}^{\overset{\rightarrow}{h}}(\underline{\eta})\defeq\sum\limits_{i=m+1}^\mathfrak{r}(1-\eta_i),$$
\par \noindent that is the number of `particles' to the left and `holes' to right of $m+\frac{1}{2}$ respectively. When $m=0$ we simplify notation to $N^{\overset{\leftarrow}{p}}$ and $N^{\overset{\rightarrow}{h}}$. Then $\Omega$ is given by, 
$$\Omega\defeq \{\underline{\eta}\in\{0,1,\}^\mathbb{Z}:N^{\overset{\leftarrow}{p}}(\underline{\eta})<\infty \text{ and }^{\overset{\rightarrow}{h}}<\infty\}.$$
\par The process evolves by particles attempting to make nearest neighbour jumps. Below we give the Markov generator for this process. First we define $\underline{\eta}^{(i,j)}$ denote the state reached from $\underline{\eta}\in \Omega$ by a particle jumping from site $i$ to site $j$ (when this is possible), that is, $$\left(\underline{\eta}^{(i,j)}\right)_k=\begin{cases}
\eta_k &\textrm{if } k \neq i,j\\
\eta_i-1 &\textrm{if } k=i \\
\eta_j+1 &\textrm{if }k=j.
\end{cases}$$
Then the generator is of the form,
$$\left(L\varphi\right)(\underline{\eta})=\sum\limits_{i=\ell}^{\mathfrak{r}-1}\left\{\eta_i(1-\eta_{i+1})\left(\varphi(\underline{\eta}^{(i,i+1)})-\varphi(\underline{\eta})\right)+q\eta_{i+1}(1-\eta_i)\left(\varphi(\underline{\eta}^{(i+1,i)})-\varphi(\underline{\eta})\right)\right\}$$
\par \noindent for some cylinder function $\varphi:\Omega \rightarrow \mathbb{R}$ and some asymmetry parameter $q\in(0,1)$.
\par 
 ASEP has a one parameter family of reversible stationary measures, known as blocking measures, which were first found by Liggett in 1976 \cite{ligg_coupling} (Theorem 1.4). The natural product blocking measure for any $c\in\mathbb{R}$ is given by,
$$\underline{\mu}^c(\underline{\eta})=\prod\limits_{i=-\infty}^\infty\frac{q^{-(i-c)\eta_i}}{1+q^{-(i-c)}}=\prod\limits_{i=-\infty}^0\frac{q^{-(i-c)\eta_i}}{1+q^{-(i-c)}}\prod\limits_{i=1}^\infty \frac{q^{(i-c)(1-\eta_i)}}{1+q^{(i-c)}}.$$ 
\par The state space is not irreducible but decomposes into irreducible components according to a conserved quantity. For any $m\in\mathbb{Z}$ the quantity, 
$$N_{m+\frac{1}{2}}(\underline{\eta})\defeq N^{\overset{\rightarrow}{h}}_{m+\frac{1}{2}}(\underline{\eta})-N^{\overset{\leftarrow}{p}}_{m+\frac{1}{2}}(\underline{\eta})=\sum\limits_{i=m+1}^\infty (1-\eta_i)-\sum\limits_{i=-\infty}^m \eta_i$$
is finite by definition and conserved by the dynamics of the process. As before when $m=0$ we simplify notation to $N$. Then, $$\Omega=\bigcup_{n \in \mathbb{Z}}\Omega^n \hspace{2mm} \text{ where } \hspace{2mm}\Omega^n\defeq\{\underline{\eta} \in \Omega: N(\underline{\eta})=n\}.$$
We note that the left shift operator, $\tau$ such that $(\tau\underline{\eta})_i=\eta_{i+1}$, defines a bijection $\Omega^n \xrightarrow[]{\tau} \Omega^{n-1}$ (i.e if $\underline{\eta} \in \Omega^n$ then, $N(\tau \underline{\eta})=n-1$). 
\par We can then find the unique stationary distribution on $\Omega^n$,
$$\underline{\nu}^n(\cdot)\defeq\underline{\mu}^c(\cdot|N(\cdot)=n)=\frac{\underline{\mu}^c(\cdot)\mathbb{I}\{\cdot\in\Omega^n\}}{\underline{\mu}^c(\{N=n\})}.$$
The results of \cite{ligg_coupling} (Theorem 1.4) and \cite{blocking} (Corollary 6.2 and Equation 6.2) give us that, 
\begin{equation}\label{eq: mu of N}
 \underline{\mu}^c(\{N=n\})=\frac{q^{\frac{n(n+1)}{2}-nc}}{\sum\limits_{\ell=-\infty}^\infty q^{\frac{\ell(\ell+1)}{2}-\ell c}},   
\end{equation}
\vspace{-1mm} and so, 
$$\underline{\nu}^n(\underline{\eta})=\frac{\sum\limits_{\ell=-\infty}^\infty q^{\frac{\ell(\ell+1)}{2}-\ell c}}{q^{\frac{n(n+1)}{2}-nc}}\prod\limits_{i=-\infty}^0\frac{q^{-(i-c)\eta_i}}{1+q^{-(i-c)}}\prod\limits_{i=1}^\infty \frac{q^{(i-c)(1-\eta_i)}}{1+q^{i-c}}\cdot \mathbb{I}\{N(\underline{\eta})=n\}.$$
Note that this is in fact independent of the parameter $c$.

\vspace{2mm}\par It is natural to ask about the distribution of the number of particles or holes in a certain region of sites under both $\mu^c$ and $\nu^n$. In Section \ref{subsect: ASEP dist results} we will answer this for any half-infinite or finite range of sites. Before this we will prove a particle-hole symmetry for ASEP which will allow us to compare the number of particles to the left of a site with the number of holes to the right (Corollary \ref{cor: c change between holes and particles in ASEP}). 

\subsection{Symmetry of ASEP}\label{subsect: ASEP symmetry}~
\par The following lemma gives a precise symmetry between the particles and holes in ASEP.
\begin{lem}\label{lem: symmetry holes to particles}
    Let $\underline{\eta}$ be an ASEP with right drift and blocking measure (for any $c\in\mathbb{R}$),
$$\underline{\mu}^c(\underline{\eta})=\prod\limits_{i=-\infty}^\infty\frac{q^{-(i-c)\eta_i}}{1+q^{-(i-c)}}.$$
Consider the process given by $\hat{\underline{\eta}}\defeq \underline{1}-\underline{\eta}$. This process is an ASEP with left drift on the state space 
$$\hat{\Omega}\defeq\{\underline{z}\in\{0,1\}^\mathbb{Z}: \sum\limits_{i=1}^\infty z_i, \sum\limits_{i=-\infty}^0 (1-z_i)<\infty\}$$
and has stationary measure (for any $c\in\mathbb{R}$), 
$$\hat{\underline{\mu}^c}(\hat{\underline{\eta}})=\prod\limits_{i=-\infty}^\infty\frac{q^{(i-c)\hat{\eta}_i}}{1+q^{i-c}}.$$
Let $N^{\overset{\rightarrow}{p}}_{m+\frac{1}{2}}(\underline{z})\defeq \sum\limits_{i=m+1}^\infty z_i$. Then for any $m\in\mathbb{Z}$, $k\in\mathbb{Z}_{\geq 0}$, and $c\in\mathbb{R}$, 
$$\underline{\mu}^c(\{N^{\overset{\rightarrow}{h}}_{m+\frac{1}{2}}(\underline{z})=k\})=\hat{\underline{\mu}}^c(\{N^{\overset{\rightarrow}{p}}_{m+\frac{1}{2}}(\underline{z})=k\}).$$
\end{lem}
\begin{proof}
By definition of $\hat{\underline{\eta}}$, particles in $\hat{\underline{\eta}}$ move like holes in $\underline{\eta}$. That is, particles jump to the left at rate $1$ and to the right at rate $q$, unless the jump is blocked. Thus, the probability there is a particle at site $i$, is exactly the probability that there isn't in $\underline{\eta}$ and similarly for when there isn't a particle in $\hat{\underline{\eta}}$. So we have that $\hat{\underline{\mu}}^c=\bigotimes\limits_{i\in\mathbb{Z}}\hat{\mu}_i^c$ given by, 
\begin{align*}
    \hat{\mu}^c_i(1)&=\mu^c_i(0)=\frac{1}{1+q^{-(i-c)}}=\frac{q^{i-c}}{1+q^{i-c}}\\
    \hat{\mu}_i^c(0)&=\mu^c_i(1)=\frac{q^{-(i-c)}}{1+q^{-(i-c)}}=\frac{1}{1+q^{i-c}}
\end{align*}
is reversible stationary for this ASEP with left drift.
Hence if we define, $N^{\overset{\rightarrow}{p}}_{m+\frac{1}{2}}(\underline{z})\defeq \sum\limits_{i=m+1}^\infty z_i$, it immediately follows that  for any $m\in\mathbb{Z}$, $k\in\mathbb{Z}_{\geq 0}$, and $c\in\mathbb{R}$, 
$$\underline{\mu}^c(\{N^{\overset{\rightarrow}{h}}_{m+\frac{1}{2}}(\underline{z})=k\})=\hat{\underline{\mu}}^c(\{N^{\overset{\rightarrow}{p}}_{m+\frac{1}{2}}(\underline{z})=k\}).$$ 

\end{proof}
We will now see that considering particles to the right of say $m+\frac{1}{2}$ in $\hat{\underline{\eta}}$ is similar to considering particles to the left of $m+\frac{1}{2}$ in $\underline{\eta}$.
\begin{lem}\label{lem: symmetry particles to left and right}
  For any $m \in\mathbb{Z}$, $k\in\mathbb{Z}_{\geq 0}$, and $c\in\mathbb{R}$, 
  $$\hat{\underline{\mu}}^c(\{N^{\overset{\rightarrow}{p}}_{m+\frac{1}{2}}(\underline{z})=k\})=\underline{\mu}^{2m+1-c}(\{N^{\overset{\leftarrow}{p}}_{m+\frac{1}{2}}(\underline{z})=k\}).$$
\end{lem}
\begin{proof}
We have that, 
\begin{align*}
    &\hat{\underline{\mu}}^c(\{N^{\overset{\rightarrow}{p}}_{m+\frac{1}{2}}(\underline{z})=k\})=\sum\limits_{\substack{\underline{z}\in\hat{\Omega}:\\ \sum\limits_{i=m+1}^\infty z_i=k}}\prod\limits_{i=m+1}^\infty\hat{\mu}_i^c(z_i)=\sum\limits_{\substack{\underline{z}\in\hat{\Omega}:\\ \sum\limits_{i=m+1}^\infty z_i=k}}\prod\limits_{i=0}^\infty\hat{\mu}_{m+1+i}^c(z_{m+1+i})\\
    \text{and}&\\
    &\underline{\mu}^{2m+1-c}(\{N^{\overset{\leftarrow}{p}}_{m+\frac{1}{2}}(\underline{z})=k\})=\sum\limits_{\substack{\underline{z}\in\Omega:\\ \sum\limits_{i=-\infty}^m z_i=k}}\prod\limits_{i=-\infty}^ m\mu_i^{2m+1-c}(z_i)=\sum\limits_{\substack{\underline{z}\in\Omega:\\ \sum\limits_{i=-\infty}^m z_i=k}}\prod\limits_{i= 0}^\infty\mu_{m-i}^{2m+1-c}(z_{m-i}).
\end{align*}
We see that for $i\geq 0$ the marginals $\mu^{2m+1-c}_{m-i}$ and $\hat{\mu}^c_{m+1+i}$ agree, for $z\in\{0,1\}$
$$\mu^{2m+1-c}_{m-i}(z)=\frac{q^{-(m-i-(2m+1-c))z}}{1+q^{-(m-i-(2m+1-c))}}=\frac{q^{(m+1+i-c)z}}{1+q^{m+1+i-c}}=\hat{\mu}^c_{m+1+i}(z).$$
Thus for any $m \in\mathbb{Z}$, $k\in\mathbb{Z}_{\geq 0}$ and $c\in\mathbb{R}$, 
$$\hat{\underline{\mu}}^c(\{N^{\overset{\rightarrow}{p}}_{m+\frac{1}{2}}(\underline{z})=k\})=\underline{\mu}^{2m+1-c}(\{N^{\overset{\leftarrow}{p}}_{m+\frac{1}{2}}(\underline{z})=k\}).$$
\end{proof}
By combining Lemmas \ref{lem: symmetry holes to particles} and \ref{lem: symmetry particles to left and right} we have the following result.
\begin{cor}\label{cor: c change between holes and particles in ASEP}
For any $m\in\mathbb{Z}$, $k \in\mathbb{Z}_{\geq 0}$, and $c\in\mathbb{R}$,
$$\underline{\mu}^c(\{N^{\overset{\rightarrow}{h}}_{m+\frac{1}{2}}(\underline{z})=k\})=\underline{\mu}^{2m+1-c}(\{N^{\overset{\leftarrow}{p}}_{m+\frac{1}{2}}(\underline{z})=k\}).$$
\end{cor}
\subsection{Distributional Results}\label{subsect: ASEP dist results}~
\par The blocking measures for ASEP are a one parameter family with parameter $c\in\mathbb{R}$. We will now see that this parameter is closely linked to shifting states. Firstly if we shift the entire system this corresponds to an integer shift in the parameter $c$.
\begin{lem} \label{lem: shift by tau is a shift in c}
For any $c \in \mathbb{R}$ and $\underline{\eta}\in \Omega$
$$\underline{\mu}^c(\tau\underline{\eta})=\underline{\mu}^{c+1}(\underline{\eta}).$$
\end{lem}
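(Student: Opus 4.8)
The plan is to work directly from the explicit product form of the blocking measure in Theorem \ref{thrm: blocking measure}, exploiting the fact that the single-site marginal $\mu_i^c(z)$ depends on the pair $(i,c)$ only through the difference $i-c$. This translation-covariance of the marginals is the one real observation driving the whole argument; everything else is bookkeeping.

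First I would record that covariance. Writing $\mu_i^c(z)=\frac{1}{Z_i^c}\frac{(p_\text{asym}/q_\text{asym})^{(i-c)z}}{f(z)!}$, both the numerator exponent $(i-c)z$ and the normalising factor $Z_i^c=\sum_z (p_\text{asym}/q_\text{asym})^{(i-c)z}/f(z)!$ are unchanged under the replacement $(i,c)\mapsto(i+1,c+1)$, since $(i+1)-(c+1)=i-c$. Hence $\mu_i^c(z)=\mu_{i+1}^{c+1}(z)$ for every site $i$, every parameter $c$ and every occupation $z$.

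Next I would unfold the product measure evaluated on the shifted configuration. Since $(\tau\underline{\eta})_i=\eta_{i+1}$, the covariance identity gives
$$\underline{\mu}^c(\tau\underline{\eta})=\prod_{i\in\mathbb{Z}}\mu_i^c\bigl((\tau\underline{\eta})_i\bigr)=\prod_{i\in\mathbb{Z}}\mu_i^c(\eta_{i+1})=\prod_{i\in\mathbb{Z}}\mu_{i+1}^{c+1}(\eta_{i+1}).$$
A change of index $j=i+1$, which is a bijection of $\mathbb{Z}$, then yields $\prod_{j\in\mathbb{Z}}\mu_j^{c+1}(\eta_j)=\underline{\mu}^{c+1}(\underline{\eta})$, which is the claim.

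The only point requiring care is that this manipulation takes place inside a doubly-infinite product, so I would confirm that the re-indexing introduces no boundary contribution. In the blocking regime each factor tends to $1$ as $|i|\to\infty$ (a.s.\ the configuration equals $\omega^\text{min}$ far to the left and $\omega^\text{max}$ far to the right, so the relevant exponents $\eta_i-\omega^\text{min}$ and $\omega^\text{max}-\eta_i$ eventually vanish, exactly as in the convergent rearrangement already used in Subsection \ref{subsect: ergodic decomp}). Thus the product converges and shifting the index set $\mathbb{Z}\to\mathbb{Z}$ is legitimate. Beyond verifying this convergence, the proof is a direct term-by-term comparison, so I do not expect any serious obstacle.
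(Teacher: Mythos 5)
Your proposal is correct and follows essentially the same route as the paper: both arguments write $\underline{\mu}^c(\tau\underline{\eta})$ as the product of single-site marginals, use the fact that the marginal depends on $(i,c)$ only through $i-c$ (so that $\mu_i^c=\mu_{i+1}^{c+1}$, including the normalising factor $Z_i^c=Z_{i+1}^{c+1}$), and reindex the doubly-infinite product. Your additional remark on convergence of the infinite product is a reasonable extra check but does not change the argument.
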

\begin{proof}
Writing the blocking measure compactly we have, $$\underline{\mu}^c(\tau\underline{\eta})=\prod\limits_{i=-\infty}^\infty\frac{q^{(c-i)(\tau\underline{\eta})_i}}{1+q^{c-i}}=\prod\limits_{i=-\infty}^\infty\frac{q^{(c-i)\eta_{i+1}}}{1+q^{c-i}}=\prod\limits_{j=-\infty}^\infty\frac{q^{(c-(j-1))\eta_j}}{1+q^{-c-(j-1)}}=\prod\limits_{j=-\infty}^\infty\frac{q^{((c+1)-j)\eta_j}}{1+q^{c+1-j}}=\underline{\mu}^{c+1}(\underline{\eta}).$$
\end{proof}
Using this fact we see that shifting the site at which we are interested the distribution of the quantities $N^{\overset{\leftarrow}{p}}_{\bullet}$, $N^{\overset{\rightarrow}{h}}_{\bullet}$ any $N_\bullet$ also corresponds to a shift in the parameter $c$.
\begin{cor}\label{cor: shift in c shifts the limits in N}
For any $c \in \mathbb{R}$ and $m,k \in \mathbb{Z}$,
\begin{enumerate}[a)]
    \item $\underline{\mu}^c(\{N^{\overset{\leftarrow}{p}}_{m+\frac{1}{2}}(\underline{z})=k\})=\underline{\mu}^{c+1}(\{N^{\overset{\leftarrow}{p}}_{m+1+\frac{1}{2}}(\underline{z})=k\})$
    \item $\underline{\mu}^c(\{N^{\overset{\rightarrow}{h}}_{m+\frac{1}{2}}(\underline{z})=k\})=\underline{\mu}^{c+1}(\{N^{\overset{\rightarrow}{h}}_{m+1+\frac{1}{2}}(\underline{z})=k\})$
    \item $\underline{\mu}^c(\{N_{m+\frac{1}{2}}(\underline{z})=k\})=\underline{\mu}^{c+1}(\{N_{m+1+\frac{1}{2}}(\underline{z})=k\})$
\end{enumerate}
\end{cor}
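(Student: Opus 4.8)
The plan is to deduce all three parts from the single measure-shift identity $\underline{\mu}^c(\tau\underline{\eta})=\underline{\mu}^{c+1}(\underline{\eta})$ of Lemma~\ref{lem: shift by tau is a shift in c}, after first checking that each of the three counting functionals intertwines with the left shift $\tau$ by advancing its half-integer index by one unit. Concretely, in the doubly infinite case one writes out $N^{\overset{\leftarrow}{p}}_{m+\frac{1}{2}}(\tau\underline{\eta})=\sum_{i=-\infty}^{m}((\tau\underline{\eta})_i-\omega^\text{min})=\sum_{i=-\infty}^{m}(\eta_{i+1}-\omega^\text{min})$, and reindexing $j=i+1$ yields $N^{\overset{\leftarrow}{p}}_{m+\frac{1}{2}}(\tau\underline{\eta})=N^{\overset{\leftarrow}{p}}_{m+1+\frac{1}{2}}(\underline{\eta})$. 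The identical reindexing applied to $N^{\overset{\rightarrow}{h}}_{m+\frac{1}{2}}(\tau\underline{\eta})=\sum_{i=m+1}^{\infty}(\omega^\text{max}-\eta_{i+1})$ gives $N^{\overset{\rightarrow}{h}}_{m+\frac{1}{2}}(\tau\underline{\eta})=N^{\overset{\rightarrow}{h}}_{m+1+\frac{1}{2}}(\underline{\eta})$, and subtracting the two produces the same relation for $N_{m+\frac{1}{2}}=N^{\overset{\rightarrow}{h}}_{m+\frac{1}{2}}-N^{\overset{\leftarrow}{p}}_{m+\frac{1}{2}}$.

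With these shift identities in hand, the second step is a change of variables in the defining sum for the probability. For part~(a) I would expand $\underline{\mu}^{c+1}(\{N^{\overset{\leftarrow}{p}}_{m+1+\frac{1}{2}}=k\})=\sum_{\underline{\eta}:\,N^{\overset{\leftarrow}{p}}_{m+1+\frac{1}{2}}(\underline{\eta})=k}\underline{\mu}^{c+1}(\underline{\eta})$, replace $\underline{\mu}^{c+1}(\underline{\eta})$ by $\underline{\mu}^c(\tau\underline{\eta})$ using Lemma~\ref{lem: shift by tau is a shift in c}, and substitute $\underline{\zeta}=\tau\underline{\eta}$. Since $\tau$ is a bijection of $\Omega$ (it restricts to bijections $\Omega^n\to\Omega^{n-(\omega^\text{max}-\omega^\text{min})}$, as recorded in Section~\ref{subsect: ergodic decomp}, and these images exhaust $\Omega$), the change of variable is legitimate, and the constraint $N^{\overset{\leftarrow}{p}}_{m+1+\frac{1}{2}}(\underline{\eta})=k$ turns, via the identity of the first step, into exactly $N^{\overset{\leftarrow}{p}}_{m+\frac{1}{2}}(\underline{\zeta})=k$. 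The resulting sum is therefore $\underline{\mu}^c(\{N^{\overset{\leftarrow}{p}}_{m+\frac{1}{2}}=k\})$, which is the claim. Parts~(b) and~(c) are verbatim the same argument, each invoking its own shift identity from the first step.

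I do not expect a genuine obstacle here, as the whole corollary is an immediate consequence of Lemma~\ref{lem: shift by tau is a shift in c}; the only points needing care are bookkeeping. I would make sure the reindexing $j=i+1$ is applied consistently at both endpoints of each infinite sum so that the half-integer index really advances by one, and I would explicitly cite the bijectivity of $\tau$ on $\Omega$ to justify relabelling the summation variable, since for a possibly infinite sum that rearrangement is not automatic. These are the two steps I would state carefully rather than any real difficulty.
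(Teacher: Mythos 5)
Your proposal is correct and follows essentially the same route as the paper: both first verify that each functional satisfies $N_{m+\frac{1}{2}}(\tau\underline{\eta})=N_{m+1+\frac{1}{2}}(\underline{\eta})$ by reindexing the defining sums, and then combine this with Lemma~\ref{lem: shift by tau is a shift in c} via a change of summation variable under the bijection $\tau$ (the paper writes the same substitution in the direction $\underline{z}\mapsto\tau^{-1}\underline{z}$ starting from the left-hand side, which is only a cosmetic difference). Your explicit remark on the bijectivity of $\tau$ on $\Omega$ is a point the paper leaves implicit, but no new idea is involved.
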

\begin{proof}
Firstly we note that for any $r \in \mathbb{Z}$, 
\begin{align*}
N^{\overset{\rightarrow}{p}}_{r+\frac{1}{2}}(\tau\underline{\eta})&=\sum\limits_{i=-\infty}^r \eta_{i+1}=\sum\limits_{i=-\infty}^{r+1}\eta_i=N^{\overset{\rightarrow}{p}}_{r+1+\frac{1}{2}}(\underline{\eta})\\
N^{\overset{\rightarrow}{h}}_{r+\frac{1}{2}}(\tau\underline{\eta})&=\sum\limits_{i=r+1}^\infty(1-\eta_{i+1})=\sum\limits_{i=r+2}^\infty(1-\eta_i)=N^{\overset{\rightarrow}{h}}_{r+1+\frac{1}{2}}(\underline{\eta}) \\
N_{r+\frac{1}{2}}(\tau\underline{\eta})&=N^{\overset{\rightarrow}{h}}_{r+\frac{1}{2}}(\tau\underline{\eta})-N^{\overset{\rightarrow}{p}}_{r+\frac{1}{2}}(\tau\underline{\eta})
=N^{\overset{\rightarrow}{h}}_{r+1+\frac{1}{2}}(\underline{\eta})-N^{\overset{\rightarrow}{p}}_{r+1+\frac{1}{2}}(\underline{\eta})=N_{r+1+\frac{1}{2}}(\underline{\eta}).
\end{align*}
\par \noindent  So we have that, 
\begin{align*}
  \underline{\mu}^c(\{N^{\overset{\leftarrow}{p}}_{m+\frac{1}{2}}(\underline{z})=k\})=\underline{\mu}^c&(\{\underline{z}:N^{\overset{\leftarrow}{p}}_{m+1+\frac{1}{2}}(\tau^{-1}\underline{z})=k\}) \\
  &=\sum\limits_{\underline{y}:N^{\overset{\leftarrow}{p}}_{m+1+\frac{1}{2}}(\underline{y})=k}\underline{\mu}^c(\tau\underline{y})=\sum\limits_{\underline{y}:N^{\overset{\leftarrow}{p}}_{m+1+\frac{1}{2}}(\underline{y})=k}\underline{\mu}^{c+1}(\underline{y})=\underline{\mu}^{c+1}(\{N^{\overset{\leftarrow}{p}}_{m+1+\frac{1}{2}}(\underline{z})=k\}).
\end{align*}
Similar arguments hold for $\underline{\mu}^c(\{N^{\overset{\rightarrow}{h}}_{m+\frac{1}{2}}(\underline{z})=k\})$ and $\underline{\mu}^c(\{N_{m+\frac{1}{2}}(\underline{z})=k\})$.
\end{proof}
We can also give results where we only change the parameter $c$ and not the quantity we wish to understand the probability of.
\begin{lem}\label{lem: c shift in mu and N p,h}
For any $c\in\mathbb{R}$, $m,k\in\mathbb{Z}$ we have that,
\begin{enumerate}[(a)]
    \item $\underline{\mu}^{c-1}(\{N^{\overset{\leftarrow}{p}}_{m+\frac{1}{2}}(\underline{z})=k\})=\frac{q^{-k}}{1+q^{c-m-1}}\cdot \underline{\mu}^c(\{N^{\overset{\leftarrow}{p}}_{m+\frac{1}{2}}(\underline{z})=k\})$
    \item $\underline{\mu}^{c-1}(\{N^{\overset{\rightarrow}{h}}_{m+\frac{1}{2}}(\underline{z})=k\})=q^{k+m+1-c}(1+q^{c-m-1})\cdot \underline{\mu}^c(\{N^{\overset{\rightarrow}{h}}_{m+\frac{1}{2}}(\underline{z})=k\})$
    \item $\underline{\mu}^{c-1}(\{N_{m+\frac{1}{2}}(\underline{z})=k\})=q^{k+m+1-c}\cdot \underline{\mu}^c(\{N_{m+\frac{1}{2}}(\underline{z})=k\}).$
\end{enumerate}

\end{lem}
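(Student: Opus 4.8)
The plan is to exploit two structural features of the blocking measure recalled in Theorem \ref{thrm: blocking measure}. First, that $\underline{\mu}^c$ is a \emph{product} measure over the sites, with marginals $\mu_i^c(z)=\tilde{q}^{(c-i)z}/(f(z)!\,Z_i^c)$ (using $\tilde{q}=q_\text{asym}/p_\text{asym}$ as in Section \ref{subsect: ergodic decomp}). Second, the elementary shift identity $Z_i^{c-1}=Z_{i+1}^c$, which is immediate from the definition of $Z_i^c=\sum_z\tilde{q}^{(c-i)z}/f(z)!$ since replacing $c$ by $c-1$ in the exponent $(c-i)z$ is the same as replacing $i$ by $i+1$. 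Together these give the per-site density ratio $\mu_i^{c-1}(z)/\mu_i^c(z)=\tilde{q}^{-z}\,Z_i^c/Z_{i+1}^c$, which is the single computational building block.

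For part (a) I would observe that the event $\{N^{\overset{\leftarrow}{p}}_{m+\frac{1}{2}}=k\}$ is measurable with respect to the coordinates $(\eta_i)_{i\le m}$ alone, so by the product structure its probability is a sum over left-configurations $(z_i)_{i\le m}$ with $\sum_{i\le m}(z_i-\omega^\text{min})=k$ of $\prod_{i\le m}\mu_i^{\bullet}(z_i)$. On each such configuration the ratio of the $(c-1)$-weight to the $c$-weight is $\prod_{i\le m}\tilde{q}^{-z_i}Z_i^c/Z_{i+1}^c$; writing $\tilde{q}^{-z_i}=\tilde{q}^{-(z_i-\omega^\text{min})}\tilde{q}^{-\omega^\text{min}}$ pulls out exactly $\tilde{q}^{-k}$ times a constant that no longer depends on the configuration. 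Since the ratio is then constant over the event, it factors out of the sum and equals the ratio of the two probabilities, giving the claimed prefactor. Part (b) is the mirror image: the event is $\sigma(\eta_i:i>m)$-measurable, and writing $\tilde{q}^{-z_i}=\tilde{q}^{\omega^\text{max}-z_i}\tilde{q}^{-\omega^\text{max}}$ pulls out $\tilde{q}^{k}$ times a constant.

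The step that needs genuine care, and which I expect to be the main obstacle, is evaluating the semi-infinite constant $\prod_{i\le m}\tilde{q}^{-\omega^\text{min}}Z_i^c/Z_{i+1}^c$ and its right-hand analogue. The telescoping part is harmless, $\prod_{i=-M}^m Z_i^c/Z_{i+1}^c=Z_{-M}^c/Z_{m+1}^c$, but one must control $Z_{-M}^c$ as $M\to\infty$. Here I would use that $\tilde{q}^{-\omega^\text{min}M}Z_{-M}^c=\sum_{z}\tilde{q}^{cz+M(z-\omega^\text{min})}/f(z)!\to \tilde{q}^{c\omega^\text{min}}/f(\omega^\text{min})!$, because every term with $z>\omega^\text{min}$ carries a factor $\tilde{q}^{M(z-\omega^\text{min})}\to0$; the a.s.\ existence of a leftmost particle (the Remark after Theorem \ref{thrm: blocking measure}) is what guarantees the underlying sums are finite and legitimises the manipulation. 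This evaluates the left constant to $\tilde{q}^{\omega^\text{min}(c-m-1)}/(f(\omega^\text{min})!\,Z_{m+1}^c)$, and the symmetric computation at $+\infty$ (dominant term $z=\omega^\text{max}$) gives $Z_{m+1}^c\,f(\omega^\text{max})!\,\tilde{q}^{\omega^\text{max}(m+1-c)}$ for the right constant; in the ASEP setting these results are applied to ($\omega^\text{min}=0$, $\omega^\text{max}=1$, $f(1)=1$) both $f(\omega^\text{min})!$ and $f(\omega^\text{max})!$ equal $1$, matching the stated prefactors exactly.

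Finally, for part (c) I would not repeat the computation but instead combine (a) and (b). Because $N_{m+\frac{1}{2}}=N^{\overset{\rightarrow}{h}}_{m+\frac{1}{2}}-N^{\overset{\leftarrow}{p}}_{m+\frac{1}{2}}$ and the two summands depend on disjoint blocks of coordinates, they are independent under the product measure, so $\underline{\mu}^{c}(\{N_{m+\frac{1}{2}}=k\})=\sum_j\underline{\mu}^{c}(\{N^{\overset{\leftarrow}{p}}_{m+\frac{1}{2}}=j\})\,\underline{\mu}^{c}(\{N^{\overset{\rightarrow}{h}}_{m+\frac{1}{2}}=j+k\})$, and likewise at $c-1$. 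Substituting the prefactors from (a) and (b) into the $c-1$ sum, the factors $Z_{m+1}^c$ cancel and the $j$-dependence in the exponent cancels as well, leaving the single factor $\tilde{q}^{k+(\omega^\text{max}-\omega^\text{min})(m+1-c)}$ in front of the unchanged sum, which is exactly $\underline{\mu}^c(\{N_{m+\frac{1}{2}}=k\})$.
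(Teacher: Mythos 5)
Your argument is correct and follows essentially the same route as the paper's: use the product structure of $\underline{\mu}^c$, the shift identity $Z_i^{c-1}=Z_{i+1}^c$, pull the factor $\tilde{q}^{\mp k}$ out of the per-site ratios on the relevant half-line, and obtain (c) from (a) and (b) by independence of $N^{\overset{\leftarrow}{p}}_{m+\frac{1}{2}}$ and $N^{\overset{\rightarrow}{h}}_{m+\frac{1}{2}}$, exactly as the paper does. The one substantive difference is that you evaluate the semi-infinite constant as a limit of truncated telescoping products, whereas the paper re-indexes the infinite product formally; your more careful version is the right one, and the limits $\tilde{q}^{-\omega^\text{min}M}Z_{-M}^c\to\tilde{q}^{c\omega^\text{min}}/f(\omega^\text{min})!$ and its analogue at $+\infty$ really do produce the extra factors $1/f(\omega^\text{min})!$ in (a) and $f(\omega^\text{max})!$ in (b) that you found. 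So the constants as printed in the lemma (and as obtained by the paper's formal telescoping) are only exact when $f(\omega^\text{min})!=f(\omega^\text{max})!=1$, which holds in the ASEP setting where the lemma is actually used, as you observe. One small addendum: with your corrected prefactors the residual factor in (c) is $f(\omega^\text{max})!/f(\omega^\text{min})!$ rather than $1$, so the same caveat applies there too; again this is harmless for ASEP.
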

\begin{proof}~
\par \begin{enumerate}[(a)]
    \item We have that, \begin{align*}
    \underline{\mu}^{c-1}(\{N^{\overset{\leftarrow}{p}}_{m+\frac{1}{2}}(\underline{z})=k\})&=\sum\limits_{\underline{z}:~N^{\overset{\leftarrow}{p}}_{m+\frac{1}{2}}(\underline{z})=k}\underline{\mu}^{c-1}(\underline{z})=\sum\limits_{\underline{z}:~N^{\overset{\leftarrow}{p}}_{m+\frac{1}{2}}(\underline{z})=k}\prod\limits_{j=-\infty}^m\frac{q^{(c-1-j)z_j}}{(1+q^{c-1-j})}\\
    &=\sum\limits_{\underline{z}:~N^{\overset{\leftarrow}{p}}_{m+\frac{1}{2}}(\underline{z})=k}\frac{q^{-\sum\limits_{j=-\infty}^mz_j}}{(1+q^{c-1-m})}\prod\limits_{j=-\infty}^m\frac{q^{(c-j)z_j}}{1+q^{c-j}}\\
    &=\frac{q^{-k}}{1+q^{c-m-1}}\cdot\underline{\mu}^c(\{N^{\overset{\leftarrow}{p}}_{m+\frac{1}{2}}(\underline{z})=k\}).
\end{align*}
\item We have that, \begin{align*}
    \underline{\mu}^{c-1}(\{N^{\overset{\rightarrow}{h}}_{m+\frac{1}{2}}(\underline{z})=k\})&=\sum\limits_{\underline{z}:~N^{\overset{\rightarrow}{h}}_{m+\frac{1}{2}}(\underline{z})=k}\underline{\mu}^{c-1}(\underline{z})\\
    &=\sum\limits_{\underline{z}:~N^{\overset{\rightarrow}{h}}_{m+\frac{1}{2}}(\underline{z})=k}\prod\limits_{j=m+1}^\infty\frac{q^{(c-1-j)z_j}}{1+q^{c-1-j}}\\
    &=\sum\limits_{\underline{z}:~N^{\overset{\rightarrow}{h}}_{m+\frac{1}{2}}(\underline{z})=k}\prod\limits_{j=m+1}^\infty\frac{q^{(1-z_j)(j-c+1)}}{{q}^{(j-c+1)}(1+q^{c-1-j})}\\
    &=\sum\limits_{\underline{z}:~N^{\overset{\rightarrow}{h}}_{m+\frac{1}{2}}(\underline{z})=k}q^{\sum\limits_{j= m+1}^\infty(1-z_j)}\cdot q^{m+1-c}(1+q^{c-m-1})\prod\limits_{j= m+1}^\infty\frac{q^{(1-z_j)(j-c)}}{q^{(j-c)}(1+q^{c-j})}\\
    &=q^{k+m+1-c}(1+q^{c-m-1})\cdot\underline{\mu}^c(\{N^{\overset{\rightarrow}{h}}_{m+\frac{1}{2}}(\underline{z})=k\}).
\end{align*}
\item We notice that, 
\begin{align*}\underline{\mu}^{c-1}(\{N_{m+\frac{1}{2}}(\underline{z})=k\})&=\sum\limits_{k_1,k_2\in \mathbb{Z}}\underline{\mu}^{c-1}(\{N_{m+\frac{1}{2}}(\underline{z})=k\}\cap\{N^{\overset{\leftarrow}{p}}_{m+\frac{1}{2}}(\underline{z})=k_1\}\cap\{N^{\overset{\rightarrow}{h}}_{m+\frac{1}{2}}(\underline{z})=k_2\})\\
&=\sum\limits_{\substack{k_1,k_2\in\mathbb{Z}:\\k_2-k_1=k}}\underline{\mu}^{c-1}(\{N^{\overset{\leftarrow}{p}}_{m+\frac{1}{2}}(\underline{z})=k_1\}\cap\{N^{\overset{\rightarrow}{h}}_{m+\frac{1}{2}}(\underline{z})=k_2\})\\
&=\sum\limits_{\substack{k_1,k_2\in\mathbb{Z}:\\k_2-k_1=k}}\underline{\mu}^{c-1}(\{N^{\overset{\leftarrow}{p}}_{m+\frac{1}{2}}(\underline{z})=k_1\}) \cdot\underline{\mu}^{c-1}(\{N^{\overset{\rightarrow}{h}}_{m+\frac{1}{2}}(\underline{z})=k_2\}).
\end{align*}
Using (a) and (b) we have that, 
\begin{align*}
    &\sum\limits_{\substack{k_1,k_2: \\ k_2-k_1=k}} \underline{\mu}^{c-1}(\{N^{\overset{\leftarrow}{p}}_{m+\frac{1}{2}}(\underline{z})=k_1\}) \cdot\underline{\mu}^{c-1}(\{N^{\overset{\rightarrow}{h}}_{m+\frac{1}{2}}(\underline{z})=k_2\}) \\
    &= \sum\limits_{\substack{k_1,k_2: \\ k_2-k_1=k}} \frac{q^{-k_1}}{(1+q^{c-m-1})}\cdot\underline{\mu}^c(\{N^{\overset{\leftarrow}{p}}_{m+\frac{1}{2}}(\underline{z})=k_1\}) \cdot q^{k_2+m+1-c}(1+q^{c-m-1})\cdot\underline{\mu}^c(\{N^{\overset{\rightarrow}{h}}_{m+\frac{1}{2}}(\underline{z})=k_2\})\\
    &= q^{k+m+1-c}\sum\limits_{\substack{k_1,k_2: \\ k_2-k_1=k}} \underline{\mu}^{c}(\{N^{\overset{\leftarrow}{p}}_{m+\frac{1}{2}}(\underline{z})=k_1\}) \cdot\underline{\mu}^{c}(\{N^{\overset{\rightarrow}{h}}_{m+\frac{1}{2}}(\underline{z})=k_2\}) \\
    &=q^{k+m+1-c}\cdot \underline{\mu}^c(\{N_{m+\frac{1}{2}}(\underline{z})=k\}).
\end{align*}
\end{enumerate}
\end{proof}
\begin{remark} We note that the proofs given in this section can be easily extended to give similar results for all processes in the blocking family as defined by Bal\'azs and Bowen, \cite{blocking}.
\end{remark}
\par Using the above results we will now find explicit formulae for the distribution of $N^{\overset{\leftarrow}{p}}_{\bullet}$, $N^{\overset{\rightarrow}{h}}_{\bullet}$ any $N_\bullet$ under the natural product blocking measure $\underline{\mu}^c$. 
\par Equation \eqref{eq: mu of N} gave the distribution under $\underline{\mu}^c$ of the conserved quantity about the site $0$, that is, $\underline{\mu}^c(\{N=n\})$. By considering shifts of ASEP states we can find the distribution of the conserved quantity about any site $m\in\mathbb{Z}$.
\begin{lem}\label{lem: mu^c(N_m=n)}
For any $n,m\in\mathbb{Z}$ and $c\in\mathbb{R}$,
$$\underline{\mu}^c(\{N_{m+\frac{1}{2}}=n\})=\frac{q^{\frac{(n+m)(n+m+1)}{2}-(n+m)c}}{\sum\limits_{\ell=-\infty}^\infty q^{\frac{\ell(\ell+1)}{2}-\ell c}}$$
\end{lem}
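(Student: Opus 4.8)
The plan is to reduce the general site $m$ to the already-known case $m=0$ recorded in \eqref{eq: mu of N}, by repeatedly invoking the shift relation Corollary \ref{cor: shift in c shifts the limits in N}(c). Writing $\Theta(c)\defeq\sum_{\ell=-\infty}^\infty q^{\ell(\ell+1)/2-\ell c}$ for the normalising sum appearing in \eqref{eq: mu of N} (note the functional argument distinguishes it from the per-site normalisers $Z_i^c$), the target formula reads $\underline{\mu}^c(\{N_{m+\frac12}=n\})=q^{(n+m)(n+m+1)/2-(n+m)c}/\Theta(c)$, so the whole content is to match this against what the $m=0$ case produces after a shift in $c$.

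First I would rewrite Corollary \ref{cor: shift in c shifts the limits in N}(c) in the form $\underline{\mu}^{c}(\{N_{m+\frac12}=n\})=\underline{\mu}^{c-1}(\{N_{(m-1)+\frac12}=n\})$ (simply relabel $c\mapsto c+1$, $m\mapsto m+1$). Iterating this identity decreases the site index and the parameter by one at each step; applying it $m$ times when $m\ge 0$, and its inverse $|m|$ times when $m<0$, brings the site index to $0$ in either case and yields
\[
\underline{\mu}^c(\{N_{m+\frac12}=n\})=\underline{\mu}^{c-m}(\{N=n\}).
\]
Substituting the known value from \eqref{eq: mu of N} with parameter $c-m$ then gives $q^{n(n+1)/2-n(c-m)}/\Theta(c-m)$.

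It then remains to convert $\Theta(c-m)$ back into $\Theta(c)$. Shifting the summation index $\ell\mapsto\ell+m$ in $\Theta(c-m)=\sum_\ell q^{\ell(\ell+1)/2-\ell(c-m)}$ and collecting the exponent gives the clean identity $\Theta(c-m)=q^{mc-m(m+1)/2}\,\Theta(c)$ (this is precisely the recursion of Lemma \ref{lem: recurrence for N=n} specialised to ASEP, where $\omega^\text{max}-\omega^\text{min}=1$ and $\tilde q=q$). Feeding this in and combining the powers of $q$, the numerator exponent becomes $n(n+1)/2-nc+nm-mc+m(m+1)/2$, which equals $(n+m)(n+m+1)/2-(n+m)c$ after expanding $(n+m)(n+m+1)/2=n(n+1)/2+nm+m(m+1)/2$. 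This is the claimed formula.

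The only genuine work is the exponent bookkeeping in this last step, together with making sure the iteration runs in the correct direction for both signs of $m$; neither is a real obstacle. As a sanity check, the claimed right-hand side already sums to $1$ over $n\in\mathbb{Z}$ (substitute $\ell=n+m$), consistent with $N_{m+\frac12}$ being a.s.\ finite. An alternative to the index reduction is to run a recursion purely in $c$ via Lemma \ref{lem: c shift in mu and N p,h}(c), which for ASEP reads $\underline{\mu}^{c-1}(\{N_{m+\frac12}=n\})=q^{\,n+(m+1-c)}\underline{\mu}^c(\{N_{m+\frac12}=n\})$, and to pin down the remaining $c$-dependence using the same normalisation; but the shift-in-$m$ route above is the most direct.
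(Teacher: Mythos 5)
Your argument is correct, and every step checks out: iterating Corollary \ref{cor: shift in c shifts the limits in N}(c) does give $\underline{\mu}^c(\{N_{m+\frac12}=n\})=\underline{\mu}^{c-m}(\{N=n\})$ for either sign of $m$, the identity $\Theta(c-m)=q^{mc-m(m+1)/2}\,\Theta(c)$ follows from the index shift $\ell\mapsto\ell+m$, and the exponent bookkeeping closes correctly. The paper gets there by a shorter route through the same shift symmetry: rather than moving the translation into the parameter $c$, it observes that $N_{m+\frac12}(\underline{z})=N(\tau^m\underline{z})=N(\underline{z})-m$ pointwise on states, so that $\{N_{m+\frac12}=n\}$ and $\{N=n+m\}$ are literally the same event, and \eqref{eq: mu of N} applies at the original $c$ with no renormalisation step at all. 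What your version buys is that it only uses the stated Corollary \ref{cor: shift in c shifts the limits in N}(c) as a black box, at the cost of having to convert $\Theta(c-m)$ back into $\Theta(c)$; what the paper's version buys is a one-line proof with no manipulation of the normalising sum, since the conserved-quantity identity $N(\tau^m\underline{z})=N(\underline{z})-m$ does all the work before any measure is evaluated. Both are legitimate, and your closing sanity check (that the right-hand side sums to $1$ over $n$) is a nice touch.
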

\begin{proof} As we saw in the proof of Corollary \ref{cor: shift in c shifts the limits in N}, $N_{m+\frac{1}{2}}(\underline{z})=N_{m-1+\frac{1}{2}}(\tau\underline{z})$. Thus we have that, $N_{m+\frac{1}{2}}(\underline{z})=N(\tau^m\underline{z})$. And as we have seen before, a left shift decreases the conserved quantity by 1 and so, $N(\tau^m\underline{z})=N(\underline{z})-m$. Then, by \eqref{eq: mu of N} we have that, 
    $$\underline{\mu}^c(\{N_{m+\frac{1}{2}}=n\})=\underline{\mu}^c(\{N=n+m\})=\frac{q^{\frac{(n+m)(n+m+1)}{2}-(n+m)c}}{\sum\limits_{\ell=-\infty}^\infty q^{\frac{\ell(\ell+1)}{2}-\ell c}}.$$
\end{proof}
We now consider the probability under the blocking measure that there is some $k\in\mathbb{Z}_{\geq 0}$ particles to left of a site $m\in\mathbb{Z}$ (inclusive).
\partL*
\begin{proof}
\par Conditioning on site $m$ we have that (for $k\geq 1$), 
\begin{align*}
    \underline{\mu}^c(\{N^{\overset{\leftarrow}{p}}_{m+\frac{1}{2}}(\underline{z})=k\})&=\underline{\mu}^c(\{N^{\overset{\leftarrow}{p}}_{m+\frac{1}{2}}(\underline{z})=k|z_m=0\})\mu^c_m(0)+\underline{\mu}^c(\{N^{\overset{\leftarrow}{p}}_{m+\frac{1}{2}}(\underline{z})=k|z_m=1\})\mu_m^c(1) \\ 
    &=\underline{\mu}^c(\{N^{\overset{\leftarrow}{p}}_{m-1+\frac{1}{2}}(\underline{z})=k\})\frac{1}{1+q^{c-m}}+\underline{\mu}^c(\{N^{\overset{\leftarrow}{p}}_{m-1+\frac{1}{2}}(\underline{z})=k-1\})\frac{q^{c-m}}{1+q^{c-m}}.
\end{align*}
We want to look at $N^{\overset{\leftarrow}{p}}_{m+\frac{1}{2}}(\underline{z})$ not $N^{\overset{\leftarrow}{p}}_{m-1+\frac{1}{2}}(\underline{z})$ so we use the result of Corollary \ref{cor: shift in c shifts the limits in N}:
$$\underline{\mu}^c(\{N^{\overset{\leftarrow}{p}}_{m-1+\frac{1}{2}}(\underline{z})=k\})=\underline{\mu}^{c+1}(\{N^{\overset{\leftarrow}{p}}_{m+\frac{1}{2}}(\underline{z})=k\}).$$
\par \noindent Hence, 
$$\underline{\mu}^c(\{N^{\overset{\leftarrow}{p}}_{m+\frac{1}{2}}(\underline{z})=k\})=\underline{\mu}^{c+1}(\{N^{\overset{\leftarrow}{p}}_{m+\frac{1}{2}}(\underline{z})=k\})\frac{1}{1+q^{c-m}}+\underline{\mu}^{c+1}(\{N^{\overset{\leftarrow}{p}}_{m+\frac{1}{2}}(\underline{z})=k-1\})\frac{q^{c-m}}{1+q^{c-m}}.$$
By Lemma \ref{lem: c shift in mu and N p,h}, 
$$\underline{\mu}^{c+1}(\{N^{\overset{\leftarrow}{p}}_{m+\frac{1}{2}}(\underline{z})=k\})=q^k(1+q^{c-m})\underline{\mu}^{c}(\{N^{\overset{\leftarrow}{p}}_{m+\frac{1}{2}}(\underline{z})=k\}).$$
And so we have, 
$$\underline{\mu}^c(\{N^{\overset{\leftarrow}{p}}_{m+\frac{1}{2}}(\underline{z})=k\})=\underline{\mu}^{c}(\{N^{\overset{\leftarrow}{p}}_{m+\frac{1}{2}}(\underline{z})=k\})q^k+\underline{\mu}^{c}(\{N^{\overset{\leftarrow}{p}}_{m+\frac{1}{2}}(\underline{z})=k-1\})q^{c-m+k-1}.$$
Giving the recurrence relation, 
$$\underline{\mu}^c(\{N^{\overset{\leftarrow}{p}}_{m+\frac{1}{2}}(\underline{z})=k\})=\frac{q^{k-1+c-m}}{1-q^k}\underline{\mu}^{c}(\{N^{\overset{\leftarrow}{p}}_{m+\frac{1}{2}}(\underline{z})=k-1\})$$
which we solve by, 
\begin{align*}
  \underline{\mu}^c(\{N^{\overset{\leftarrow}{p}}_{m+\frac{1}{2}}(\underline{z})=k\})&=\left(\prod\limits_{i=1}^k \frac{q^{i-1+c-m}}{1-q^i} \right)\underline{\mu}^c(\{N^{\overset{\leftarrow}{p}}_{m+\frac{1}{2}}(\underline{z})=0\})\\
  &=\prod\limits_{i=1}^k \frac{q^{i-1+c-m}}{1-q^i}\cdot \prod\limits_{j=-\infty}^ m\frac{1}{1+q^{c-j}}\\
  &=\frac{q^{k(c-m-1)+\frac{k(k+1)}{2}}}{(q;q)_k(-q^{c-m};q)_\infty}\\
  &=\frac{q^{k(c-m)+\frac{k(k-1)}{2}}}{(q;q)_k(-q^{c-m};q)_\infty}.
\end{align*}
\end{proof}
\begin{remark}
    An alternative proof of this result by considering two coupled ASEPs can be given. We will see this in Section \ref{subsect: asep label process}.
\end{remark}
It is also natural to ask what is the distribution of this quantity given we consider a fixed conserved quantity $n\in\mathbb{Z}$, i.e.\ under $\underline{\nu}^n$ the unique distribution on the irreducible component $\Omega^n$. First we give the distribution of a single site under the conditional measure $\underline{\nu}^n$.

\begin{lem}\label{lem: site under nu n}
    For any $n,m\in\mathbb{Z}$ we have that, 
    \begin{align*}
        \underline{\nu}^n(\{z_m=1\})&=(q;q)_\infty\sum\limits_{k=(m-n-1)^+}^\infty\frac{q^{(k+1)(n-m+k+1)}}{(q;q)_k(q;q)_{n-m+k+1}}\\
        \underline{\nu}^n(\{z_m=0\})&=(q;q)_\infty\sum\limits_{k=(m-n)^+}^\infty\frac{q^{k(n-m+k+1)}}{(q;q)_k(q;q)_{n-m+k}}.
    \end{align*}
\end{lem}

\begin{proof}
By the product structure of $\underline{\mu}^c$, Corollary \ref{cor: c change between holes and particles in ASEP} and Theorem \ref{thm: half infinite asep particles}  we have that,
    \begin{align*}
    \underline{\nu}^n(&\{z_m=1\})=\underline{\mu}^c(z_m=1|N(\underline{z})=n)\\
    \\
    &=\frac{\underline{\mu}^c(\{z_m=1\}\bigcap\{N(\underline{z})=n\})}{\underline{\mu}^c(\{N(\underline{z})=n\})}\\
    \\
    &=\frac{\underline{\mu}^c(\{z_m=1\}\bigcap\{N_{m+\frac{1}{2}}(\underline{z})=n-m\})}{\underline{\mu}^c(\{N(\underline{z})=n\})}\\
    \\
    &=\sum\limits_{k=(m-n-1)^+}^\infty\frac{\underline{\mu}^c(\{z_m=1\}\bigcap\{N_{m-1+\frac{1}{2}}^{\overset{\leftarrow}{p}}(\underline{z})=k\}\bigcap\{N_{m+\frac{1}{2}}^{\overset{\rightarrow}{h}}(\underline{z})=n-m+k+1\})}{\underline{\mu}^c(\{N(\underline{z})=n\})}\\
    \\
    &=\sum\limits_{k=(m-n-1)^+}^\infty\frac{\mu_m^c(1)\underline{\mu}^c(\{N_{m-1+\frac{1}{2}}^{\overset{\leftarrow}{p}}(\underline{z})=k\})\underline{\mu}^c(\{N_{m+\frac{1}{2}}^{\overset{\rightarrow}{h}}(\underline{z})=n-m+k+1\})}{\underline{\mu}^c(\{N(\underline{z})=n\})}\\
    \\
    &=\sum\limits_{k=(m-n-1)^+}^\infty\frac{\mu_m^c(1)\underline{\mu}^c(\{N_{m-1+\frac{1}{2}}^{\overset{\leftarrow}{p}}(\underline{z})=k\})\underline{\mu}^{2m+1-c}(\{N_{m+\frac{1}{2}}^{\overset{\leftarrow}{p}}(\underline{z})=n-m+k+1\})}{\underline{\mu}^c(\{N(\underline{z})=n\})}\\
    \\
    &=\frac{\sum\limits_{\ell\in\mathbb{Z}}q^{\frac{\ell(\ell+1)}{2}-\ell c}}{q^{\frac{n(n+1)}{2}-nc}}\cdot \frac{q^{c-m}}{(1+q^{c-m})}\cdot \sum\limits_{k=(m-n-1)^+}^\infty\frac{q^{k(c+1-m)+\frac{k(k-1)}{2}}}{(q;q)_k(-q^{c+1-m};q)_\infty}\cdot \frac{q^{(n-m+k+1)(m+1-c)+\frac{(n-m+k+1)(n-m+k)}{2}}}{(q;q)_{n-m+k+1}(-q^{m+1-c};q)_\infty}\\
    \\
    &=\sum\limits_{\ell\in\mathbb{Z}}q^{\frac{\ell(\ell+1)}{2}-\ell c} \cdot \frac{q^{mc -\frac{m(m+1)}{2}}}{(-q^{c-m};q)_\infty(-q^{m+1-c};q)_\infty}\sum\limits_{k=(m-n-1)^+}^\infty\frac{q^{(k+1)(n-m+k+1)}}{(q;q)_{n-m+k+1}(q;q)_k}.
\end{align*}
Lemma \ref{lem: pochammer relation for nu^n calcs} gives us that,
$$\frac{q^{\frac{mc-m(m+1)}{2}}}{(-q^{c-m};q)_\infty(-q^{m+1-c};q)_\infty}=\frac{1}{(-q^{1-c};q)_\infty(-q^{c};q)_\infty}.$$
The Jacobi Triple Product identity states that, for $q,z\in\mathbb{R}$  such that $|q|<1$ and $z\neq 0$,
$$\sum\limits_{l=-\infty}^\infty q^{\frac{\ell(\ell+1)}{2}}z^{\ell}=(q;q)_\infty(-qz;q)_\infty(-z^{-1};q)_\infty.$$
Then by setting $z\defeq q^{-c}$ we have that, 
\begin{equation}\label{eq: JTP}
    \sum\limits_{l=-\infty}^\infty q^{\frac{\ell(\ell+1)}{2}-\ell c}=(q;q)_\infty(-q^{1-c};q)_\infty(-q^c;q)_\infty.
\end{equation}
Putting this together we find that, 
$$\underline{\nu}^n(\{z_m=1\})=(q;q)_\infty\sum\limits_{k=(m-n-1)^+}^\infty\frac{q^{(k+1)(n-m+k+1)}}{(q;q)_k(q;q)_{n-m+k+1}}.$$
By similar reasoning we have that, 
\begin{align*}
    \underline{\nu}^n(&\{z_m=0\})=\sum\limits_{k=(m-n)^+}^\infty\frac{\mu^c_m(0)\underline{\mu}^c(\{N^{\overset{\leftarrow}{p}}_{m-1+\frac{1}{2}}(\underline{z})=k\})\underline{\mu}^c(\{N^{\overset{\rightarrow}{h}}_{m+\frac{1}{2}}(\underline{z})=n-m+k\})}{\underline{\mu}^c(\{N(\underline{z})=n\})}\\
    &=\sum\limits_{k=(m-n)^+}^\infty\frac{\mu^c_m(0)\underline{\mu}^c(\{N^{\overset{\leftarrow}{p}}_{m-1+\frac{1}{2}}(\underline{z})=k\})\underline{\mu}^{2m+1-c}(\{N^{\overset{\leftarrow}{p}}_{m+\frac{1}{2}}(\underline{z})=n-m+k\})}{\underline{\mu}^c(\{N(\underline{z})=n\})}\\
    \\
    &=\frac{\sum\limits_{\ell\in\mathbb{Z}}q^{\frac{\ell(\ell+1)}{2}-\ell c}}{q^{\frac{n(n+1)}{2}-nc}}\cdot \frac{1}{(1+q^{c-m})}\cdot \sum\limits_{k=(m-n)^+}^\infty\frac{q^{k(c+1-m)+\frac{k(k-1)}{2}}}{(q;q)_k(-q^{c+1-m};q)_\infty}\cdot \frac{q^{(n-m+k)(m+1-c)+\frac{(n-m+k)(n-m+k-1)}{2}}}{(q;q)_{n-m+k}(-q^{m+1-c};q)_\infty}\\
    \\
    &=\sum\limits_{\ell\in\mathbb{Z}}q^{\frac{\ell(ell+1)}{2}-\ell c}\cdot \frac{q^{mc-\frac{m(m+1)}{2}}}{(-q^{c-m};q)_\infty(-q^{m+1-c};q)_\infty}\cdot \sum\limits_{k=(m-n)^+}^\infty\frac{q^{k(n-m+k+1)}}{(q;q)_k(q;q)_{n-m+k}}.
\end{align*}
By Lemma \ref{lem: pochammer relation for nu^n calcs} and the Jacobi triple product identity, equation \eqref{eq: JTP}, we find that, 
$$\underline{\nu}^n(\{z_m=0\})=(q;q)_\infty\sum\limits_{k=(m-n)^+}^\infty\frac{q^{k(n-m+k+1)}}{(q;q)_k(q;q)_{n-m+k}}.$$
\end{proof}
 Now we find the distribution of ASEP particles in a half infinite range under $\underline{\nu}^n$.
\nupartL*
\newpage\begin{proof}
    By definition (for any $c\in\mathbb{R})$, 
    $$\underline{\nu}^n(\{N^{\overset{\leftarrow}{p}}_{m+\frac{1}{2}}(\underline{z})=k\})=\underline{\mu}^c(\{N^{\overset{\leftarrow}{p}}_{m+\frac{1}{2}}(\underline{z})=k\}|\{N(\underline{z})=n\})=\frac{\underline{\mu}^c(\{N^{\overset{\leftarrow}{p}}_{m+\frac{1}{2}}(\underline{z})=k\}\cap\{N(\underline{z})=n\})}{\underline{\mu}^c(\{N(\underline{z})=n\})}.$$
    We notice that, 
    \begin{align*}
    \{N^{\overset{\leftarrow}{p}}_{m+\frac{1}{2}}(\underline{z})=k\}\cap\{N(\underline{z})=n\}&=\{N^{\overset{\leftarrow}{p}}_{m+\frac{1}{2}}(\underline{z})=k\}\cap\{N_{m+\frac{1}{2}}(\underline{z})=n-m\}\\
    &=\{N^{\overset{\leftarrow}{p}}_{m+\frac{1}{2}}(\underline{z})=k\}\cap\{N^{\overset{\rightarrow}{h}}_{m+\frac{1}{2}}(\underline{z})=n-m+k\}.
    \end{align*}
    Since both the number of particles to the left and holes to the right of $(m+\frac{1}{2})$ must be positive we see that $\underline{\nu}^n(\{N^{\overset{\leftarrow}{p}}_{m+\frac{1}{2}}(\underline{z})=k\})$ is only well defined for $k\geq\max\{m-n,0\}$.
    \par Since $\underline{\mu}^c$ is a product measure we have that, 
    $$\underline{\nu}^n(\{N^{\overset{\leftarrow}{p}}_{m+\frac{1}{2}}(\underline{z})=k\})=\frac{\underline{\mu}^c(\{N^{\overset{\leftarrow}{p}}_{m+\frac{1}{2}}(\underline{z})=k\})\cdot\underline{\mu}^c(\{N^{\overset{\rightarrow}{h}}_{m+\frac{1}{2}}(\underline{z})=n-m+k\})}{\underline{\mu}^c(\{N(\underline{z})=n\})}.$$
    By Corollary \ref{cor: c change between holes and particles in ASEP} that is,
     $$\underline{\nu}^n(\{N^{\overset{\leftarrow}{p}}_{m+\frac{1}{2}}(\underline{z})=k\})=\frac{\underline{\mu}^c(\{N^{\overset{\leftarrow}{p}}_{m+\frac{1}{2}}(\underline{z})=k\})\cdot\underline{\mu}^{2m+1-c}(\{N^{\overset{\leftarrow}{p}}_{m+\frac{1}{2}}(\underline{z})=n-m+k\})}{\underline{\mu}^c(\{N(\underline{z})=n\})}.$$
     Theorem \ref{thm: half infinite asep particles} and equation \eqref{eq: mu of N} give, 
     $$\underline{\nu}^n(\{N^{\overset{\leftarrow}{p}}_{m+\frac{1}{2}}(\underline{z})=k\})=\frac{\sum\limits_{\ell\in\mathbb{Z}}q^{\frac{\ell(\ell+1)}{2}-\ell c}\cdot q^{k(n-m+k)-\frac{m(m+1)}{2}+mc}}{(q;q)_k(-q^{c-m};q)_\infty(q;q)_{n-m+k}(-q^{m+1-c};q)_\infty}.$$
     Now we can use Lemma \ref{lem: pochammer relation for nu^n calcs} and find that, 
     $$\underline{\nu}^n(\{N^{\overset{\leftarrow}{p}}_{m+\frac{1}{2}}(\underline{z})=k\})=\frac{\sum\limits_{\ell \in \mathbb{Z}}q^{\frac{\ell(\ell+1)}{2}-\ell c}\cdot q^{k(n-m+k)}}{(q;q)_k(q;q)_{n-m+k}(-q^{1-c};q)_\infty (-q^c;q)_\infty}.$$
     Finally the Jacobi triple product identity, equation \eqref{eq: JTP} gives us that, 
     $$\underline{\nu}^n(\{N^{\overset{\leftarrow}{p}}_{m+\frac{1}{2}}(\underline{z})=k\})=\frac{ q^{k(n-m+k)}(q;q)_\infty}{(q;q)_k(q;q)_{n-m+k}}.$$
\end{proof}
\vspace{3mm} We also consider the probability under the blocking measure that there is some $k$ particles in between two sites. 
\finparts*
\begin{proof}
Firstly, let us consider the cases where $k=0$ or $k=\hat{m}_2$. These cases are directly computable since there is only 1 state in each case for which $N^{\overset{\leftarrow}{p}}_{m_2-1+\frac{1}{2}}(\underline{z})-N^{\overset{\leftarrow}{p}}_{m_1+\frac{1}{2}}(\underline{z})=k$. If $k=0$ then the state is such that all the sites in $[m_1+1,m_2-1]$ are empty and so, 
\begin{equation}\label{eq: 0 particles}
\underline{\mu}^c(\{N^{\overset{\leftarrow}{p}}_{m_2-1+\frac{1}{2}}(\underline{z})-N^{\overset{\leftarrow}{p}}_{m_1+\frac{1}{2}}(\underline{z})=0\})=\frac{1}{\prod\limits_{i=m_1+1}^{m_2-1}(1+q^{c-i})}=\frac{1}{\prod\limits_{i=1}^{\hat{m}_2}(1+q^{c-m_2+i})}=\frac{1}{(-q^{c-m_2+1};q)_{\hat{m}_2}}.
\end{equation}
If $k=\hat{m}_2$ then the state is such that all the sites in $[m_1+1,m_2-1]$ are full and so, 
\begin{equation}\label{eq: full particles}
\underline{\mu}^c(\{N^{\overset{\leftarrow}{p}}_{m_2-1+\frac{1}{2}}(\underline{z})-N^{\overset{\leftarrow}{p}}_{m_1+\frac{1}{2}}(\underline{z})=\hat{m}_2\})=\prod\limits_{i=m_1+1}^{m_2-1}\frac{q^{c-i}}{(1+q^{c-i})}=\frac{q^{\hat{m}_2(c+1-m_2)+\frac{\hat{m}_2(\hat{m}_2-1)}{2}}}{(-q^{c-m_2+1};q)_{\hat{m}_2}}.\end{equation}
Now, let us consider $k\in\{1,...,\hat{m}_2-1\}$, we see that
\begin{align}\underline{\mu}^c(\{N^{\overset{\leftarrow}{p}}_{m_2-1+\frac{1}{2}}(\underline{z})-&N^{\overset{\leftarrow}{p}}_{m_1+\frac{1}{2}}(\underline{z})=k\})\notag\\
=& \underline{\mu}^c(\{N^{\overset{\leftarrow}{p}}_{m_2-1+\frac{1}{2}}(\underline{z})-N^{\overset{\leftarrow}{p}}_{m_1+\frac{1}{2}}(\underline{z})=k|z_{m_2-1}=0\})\cdot \mu^c_{m_2-1}(0)\notag\\
&+\underline{\mu}^c(\{N^{\overset{\leftarrow}{p}}_{m_2-1+\frac{1}{2}}(\underline{z})-N^{\overset{\leftarrow}{p}}_{m_1+\frac{1}{2}}(\underline{z})=k|z_{m_2-1}=1\})\cdot\mu^c_{m_2-1}(1)\notag\\
=&\underline{\mu}^c(\{N^{\overset{\leftarrow}{p}}_{m_2-2+\frac{1}{2}}(\underline{z})-N^{\overset{\leftarrow}{p}}_{m_1+\frac{1}{2}}(\underline{z})=k\})\cdot\frac{1}{1+q^{c-m_2+1}}\notag\\
&+\underline{\mu}^c(\{N^{\overset{\leftarrow}{p}}_{m_2-2+\frac{1}{2}}(\underline{z})-N^{\overset{\leftarrow}{p}}_{m_1+\frac{1}{2}}(\underline{z})=k-1\})\cdot \frac{q^{c-m_2+1}}{1+q^{c-m_2+1}}.\label{eq: finite asep particles}
\end{align}
We now show that $\underline{\mu}^c(\{N^{\overset{\leftarrow}{p}}_{m_2-1+\frac{1}{2}}(\underline{z})-N^{\overset{\leftarrow}{p}}_{m_1+\frac{1}{2}}(\underline{z})=k\})=\frac{q^{k(c+1-m_2)+\frac{k(k-1)}{2}}}{(-q^{c-m_2+1};q)_{\hat{m}_2}}\begin{bmatrix}
    \hat{m}_2 \\
    k
    \end{bmatrix}_{q}$ satisfies (\ref{eq: finite asep particles}). This formula is inspired by a combinatorial argument using integer partitions. As additional material this argument is given in Appendix \ref{appendix: combinatorics} but those parts are not needed for any of the proofs in this article, since these are self-contained and purely probabilistic.
    \par Start with the RHS of (\ref{eq: finite asep particles}), 
\begin{align*}
&\frac{1}{1+q^{c-m_2+1}}\left(\underline{\mu}^c(\{N^{\overset{\leftarrow}{p}}_{m_2-2+\frac{1}{2}}(\underline{z})-N^{\overset{\leftarrow}{p}}_{m_1+\frac{1}{2}}(\underline{z})=k\})+q^{c-m_2+1}\cdot\underline{\mu}^c(\{N^{\overset{\leftarrow}{p}}_{m_2-2+\frac{1}{2}}(\underline{z})-N^{\overset{\leftarrow}{p}}_{m_1+\frac{1}{2}}(\underline{z})=k-1\})\right)\\
&\hspace{8mm}=\frac{1}{1+q^{c-m_2+1}}\left( \frac{q^{k(c+2-m_2)+\frac{k(k-1)}{2}}}{(-q^{c-m_2+2};q)_{\hat{m}_2-1}}\begin{bmatrix}
    \hat{m}_2-1 \\
    k
    \end{bmatrix}_{q} + q^{c-m_2+1}\frac{q^{(k-1)(c+2-m_2)+\frac{(k-1)(k-2)}{2}}}{(-q^{c-m_2+2};q)_{\hat{m}_2-1}}\begin{bmatrix}
    \hat{m}_2-1 \\
    k-1
    \end{bmatrix}_{q} \right)\\
&\hspace{8mm}=\frac{q^{k(c+1-m_2)+\frac{k(k-1)}{2}}}{(-q^{c-m_2+1};q)_{\hat{m}_2}}\left(q^k\begin{bmatrix}
    \hat{m}_2-1 \\
    k
    \end{bmatrix}_{q}+\begin{bmatrix}
    \hat{m}_2-1 \\
    k-1
    \end{bmatrix}_{q}\right).
\end{align*}
Using the $q$-binomial analogue of Pascal's identity (Lemma \ref{lem: q-pascal}) we have that,
$$q^k\begin{bmatrix}
    \hat{m}_2-1 \\
    k
    \end{bmatrix}_{q}+\begin{bmatrix}
    \hat{m}_2-1 \\
    k-1
    \end{bmatrix}_{q}=\begin{bmatrix}
    \hat{m}_2 \\
    k
    \end{bmatrix}_{q}.$$
Thus (\ref{eq: finite asep particles}) holds with $\underline{\mu}^c(\{N^{\overset{\leftarrow}{p}}_{m_2-1+\frac{1}{2}}(\underline{z})-N^{\overset{\leftarrow}{p}}_{m_1+\frac{1}{2}}(\underline{z})=k\})=\frac{q^{k(c+1-m_2)+\frac{k(k-1)}{2}}}{(-q^{c-m_2+1};q)_{\hat{m}_2}}\begin{bmatrix}
    \hat{m}_2 \\
    k
    \end{bmatrix}_{q}$.
    \par\vspace{2mm}\noindent To complete the proof we first notice that the boundary conditions given by \eqref{eq: 0 particles} and \eqref{eq: full particles} conform with the solution above and show that with the recursion \eqref{eq: finite asep particles} the distribution is completely fixed. To find the value of $\underline{\mu}^c(\{N^{\overset{\leftarrow}{p}}_{m_2-1+\frac{1}{2}}(\underline{z})-N^{\overset{\leftarrow}{p}}_{m_1+\frac{1}{2}}(\underline{z})=k\})$ at a point $(\hat{m}_2,k)$ the recursion \eqref{eq: finite asep particles} requires values at the points $(\hat{m}_2-1,k)$ and $(\hat{m}_2-1,k-1)$. Using the recursion iteratively we arrive to the boundary points $(x,x)$ and $(x,0)$ for any $x\in\{1,\dots,k\}$. The former case is handled by \eqref{eq: full particles} and the latter by \eqref{eq: 0 particles}.  
\end{proof}
\begin{remark}
    Notice that Theorem \ref{thm: half infinite asep particles} can be seen by taking $m\defeq m_2-1$  and the limit as $m_1 \to -\infty$ in Lemma \ref{lem: finite asep particles}. This gives it yet another alternative proof.
\end{remark}
Again it is natural to ask what is the distribution of this quantity under $\underline{\nu}^n$?
\nufinparts*

\begin{proof}
    By definition (for any $c\in\mathbb{R}$), 
    \begin{align*}\underline{\nu}^n(\{N^{\overset{\leftarrow}{p}}_{m_2-1+\frac{1}{2}}(\underline{z})-N^{\overset{\leftarrow}{p}}_{m_1+\frac{1}{2}}(\underline{z})=k\})&=\underline{\mu}^c(\{N^{\overset{\leftarrow}{p}}_{m_2-1+\frac{1}{2}}(\underline{z})-N^{\overset{\leftarrow}{p}}_{m_1+\frac{1}{2}}(\underline{z})=k\}|\{N(\underline{z})=n\})\\
    &=\frac{\underline{\mu}^c(\{N^{\overset{\leftarrow}{p}}_{m_2-1+\frac{1}{2}}(\underline{z})-N^{\overset{\leftarrow}{p}}_{m_1+\frac{1}{2}}(\underline{z})=k\}\cap\{N(\underline{z})=n\})}{\underline{\mu}^c(\{N(\underline{z})=n\})}.
    \end{align*}
    We notice that any state $\underline{z}$ that satisfies the event, 
    $$\{N^{\overset{\leftarrow}{p}}_{m_2-1+\frac{1}{2}}(\underline{z})-N^{\overset{\leftarrow}{p}}_{m_1+\frac{1}{2}}(\underline{z})=k\}\cap\{N(\underline{z})=n\},$$
    is such that for some $j$,
    $$\{N^{\overset{\rightarrow}{h}}_{m_2-1+\frac{1}{2}}=n+1+k+j-m_2\}\cap\{N^{\overset{\leftarrow}{p}}_{m_2-1+\frac{1}{2}}(\underline{z})-N^{\overset{\leftarrow}{p}}_{m_1+\frac{1}{2}}(\underline{z})=k\}\cap\{N^{\overset{\leftarrow}{p}}_{m_1+\frac{1}{2}}(\underline{z})=j\}.$$
    Since the number of particles to the left of $m_1+\frac{1}{2}$ and the number of holes to the right of $m_2-1+\frac{1}{2}$ must both be positive we see that if $N^{\overset{\leftarrow}{p}}_{m_2-1+\frac{1}{2}}(\underline{z})-N^{\overset{\leftarrow}{p}}_{m_1+\frac{1}{2}}(\underline{z})=k$ then $j\geq (m_2-n-1-k)^+$.
    \par Since $\underline{\mu}^c$ is a product measure we have that, 
    \begin{align*}
        \underline{\nu}^n(\{&N^{\overset{\leftarrow}{p}}_{m_2-1+\frac{1}{2}}(\underline{z})-N^{\overset{\leftarrow}{p}}_{m_1+\frac{1}{2}}(\underline{z})=k\})\\
        &=\frac{1}{\underline{\mu}^c(\{N(\underline{z})=n\})}\sum\limits_{j=(m_2-n-1-k)^+}^\infty \underline{\mu}^c(\{N^{\overset{\rightarrow}{h}}_{m_2-1+\frac{1}{2}}=n+1+k+j-m_2\})\\
        &\hspace{55mm}\cdot \underline{\mu}^c(\{N^{\overset{\leftarrow}{p}}_{m_2-1+\frac{1}{2}}(\underline{z})-N^{\overset{\leftarrow}{p}}_{m_1+\frac{1}{2}}(\underline{z})=k\})\cdot \underline{\mu}^c(\{N^{\overset{\leftarrow}{p}}_{m_1+\frac{1}{2}}(\underline{z})=j\}).
    \end{align*}
     By Corollary \ref{cor: c change between holes and particles in ASEP} that is,
     \begin{align*}
        \underline{\nu}^n(\{&N^{\overset{\leftarrow}{p}}_{m_2-1+\frac{1}{2}}(\underline{z})-N^{\overset{\leftarrow}{p}}_{m_1+\frac{1}{2}}(\underline{z})=k\})\\
        &=\frac{1}{\underline{\mu}^c(\{N(\underline{z})=n\})}\sum\limits_{j=(m_2-n-1-k)^+}^\infty \underline{\mu}^{2m_2-1-c}(\{N^{\overset{\leftarrow}{p}}_{m_2-1+\frac{1}{2}}=n+1+k+j-m_2\})\\
        &\hspace{55mm}\cdot \underline{\mu}^c(\{N^{\overset{\leftarrow}{p}}_{m_2-1+\frac{1}{2}}(\underline{z})-N^{\overset{\leftarrow}{p}}_{m_1+\frac{1}{2}}(\underline{z})=k\})\cdot \underline{\mu}^c(\{N^{\overset{\leftarrow}{p}}_{m_1+\frac{1}{2}}(\underline{z})=j\}).
    \end{align*}
     Theorem \ref{thm: half infinite asep particles}, Lemma \ref{lem: finite asep particles} and equation \eqref{eq: mu of N} give, 
     \begin{align*}
        \underline{\nu}^n(\{&N^{\overset{\leftarrow}{p}}_{m_2-1+\frac{1}{2}}(\underline{z})-N^{\overset{\leftarrow}{p}}_{m_1+\frac{1}{2}}(\underline{z})=k\})\\&=\frac{\sum\limits_{\ell\in\mathbb{Z}}q^{\frac{\ell(\ell+1)}{2}-\ell c}}{q^{\frac{n(n+1)}{2}-nc}}\cdot \sum\limits_{j=(m_2-n-1-k)^+}^\infty \frac{q^{(n+1+k+j-m_2)(m_2-c)+\frac{(n+1+k+j-m_2)(n+k+j-m_2)}{2}}}{(q;q)_{n+1+k+j-m_2}(-q^{m_2-c};q)_\infty}\\
        \\
        &\hspace{55mm}\cdot \frac{q^{j(c-m_1)+\frac{j(j-1)}{2}}}{(q;q)_j(-q^{c-m_1};q)_\infty}\cdot \frac{q^{k(c+1-m_2)+\frac{k(k-1)}{2}}}{(-q^{c-m_2+1};q)_{\hat{m}_2}}\cdot \begin{bmatrix}
        \hat{m}_2 \\
        k
        \end{bmatrix}_{q}\\
        \\
        &=\frac{q^{k(n+1+k-m_2)+(m_2-1)c-\frac{m_2(m_2-1)}{2}}\sum\limits_{\ell\in\mathbb{Z}}q^{\frac{\ell(\ell+1)}{2}-\ell c}}{(-q^{c-m_2+1};q)_{\hat{m}_2}(-q^{c-m_1};q)_\infty(-q^{m_2-c};q)_\infty}\cdot \begin{bmatrix}
        \hat{m}_2 \\
        k
        \end{bmatrix}_{q}\cdot \sum\limits_{j=(m_2-1-n-k)^+}^\infty\frac{q^{j(n-m_1+k+j)}}{(q;q)_j(q;q)_{n+1-m_2+k+j}}. 
     \end{align*}
     We notice that,
     $$(-q^{c-m_2+1};q)_{\hat{m}_2}(-q^{c-m_1};q)_\infty=(-q^{c-m_2+1};q)_\infty.$$
     So by Lemma \ref{lem: pochammer relation for nu^n calcs} with $m\defeq m_2-1$ we find that, 
     \begin{align*}
        \underline{\nu}^n(\{N^{\overset{\leftarrow}{p}}_{m_2-1+\frac{1}{2}}(\underline{z})-&N^{\overset{\leftarrow}{p}}_{m_1+\frac{1}{2}}(\underline{z})=k\})\\
        &=\frac{q^{k(n+1+k-m_2)}\sum\limits_{\ell\in\mathbb{Z}}q^{\frac{\ell(\ell+1)}{2}-\ell c}}{(-q^{1-c};q)_\infty(-q^c;q)_\infty}\cdot\begin{bmatrix}
    \hat{m}_2 \\
    k
    \end{bmatrix}_{q}\cdot \sum\limits_{j=(m_2-1-n-k)^+}^\infty\frac{q^{j(n-m_1+k+j)}}{(q;q)_j(q;q)_{n+1-m_2+k+j}}\\
    \\
    &=\frac{q^{k(n+1+k-m_2)}\sum\limits_{\ell\in\mathbb{Z}}q^{\frac{\ell(\ell+1)}{2}-\ell c}}{(-q^{1-c};q)_\infty(-q^c;q)_\infty}\cdot\begin{bmatrix}
    \hat{m}_2 \\
    k
    \end{bmatrix}_{q}\cdot \sum\limits_{j=(m_2-1-n-k)^+}^\infty\frac{q^{j(n+1-m_2+k+j)+\hat{m}_2j}}{(q;q)_j(q;q)_{n+1-m_2+k+j}}.
    \end{align*}
    \par Finally the Jacobi triple product identity, equation \eqref{eq: JTP}, gives, 
    $$\frac{\sum\limits_{\ell\in\mathbb{Z}}q^{\ell(\ell+1)-\ell c}}{(-q^{1-c};q)_\infty(-q^c;q)_\infty}=(q;q)_\infty.$$
    Thus, 
    \begin{align*}
        \underline{\nu}^n(\{N^{\overset{\leftarrow}{p}}_{m_2-1+\frac{1}{2}}(\underline{z})&-N^{\overset{\leftarrow}{p}}_{m_1+\frac{1}{2}}(\underline{z})=k\})\\
        &=q^{k(n+1+k-m_2)}(q;q)_\infty\cdot\begin{bmatrix}
        \hat{m}_2 \\
        k
        \end{bmatrix}_{q}\cdot \sum\limits_{j=(m_2-1-n-k)^+}^\infty\frac{q^{j(n+1-m_2+k+j)+\hat{m}_2j}}{(q;q)_j(q;q)_{n+1-m_2+k+j}}.
    \end{align*}
\end{proof}
\begin{remark}
    It seems that this can not be simplified further. The sum, 
    $$\sum\limits_{j=(m_2-1-n-k)^+}^\infty\frac{q^{j(n+1-m_2+k+j)+\hat{m}_2j}}{(q;q)_j(q;q)_{n+1-m_2+k+j}}$$
    looks very similar to the sum in the Durfee rectangles identity, in fact if the $q^{j\hat{m}_2}$ factor was not there it could be simplified using the Durfee rectangles identity (see Theorem \ref{thrm: identity from symmetry}).
\end{remark}
\subsection{Combinatorial Identities coming from ASEP}\label{subsect: identities}~
\par Certain probabilistic questions for blocking family interacting particle systems, such as stationarity and equivalences between systems, lead to proofs of identities of combinatorial significance. For example, in 2018 Bal\'azs and Bowen \cite{blocking} gave a probabilistic proof of the Jacobi triple product identity using the Exclusion - Zero-range correspondence. Also, in 2022 Bal\'azs, Fretwell and Jay \cite{MDJ} found new 3-variable combinatorial identities as well as known identities by considering the family of $0$-$1$-$2$ blocking particle systems and also the $k$-Exclusion process. Here we see that other well-known combinatorial identities with interpretations in terms of integer partitions (explained in Section \ref{sect: combinatorics}) can be explained probabilistically by considering certain quantities for ASEP.  
\par By considering the symmetry between particles and holes for the asymmetric simple exclusion process we can give a probabilistic interpretation to a well-known identity from the theory of integer partitions, the Durfee rectangles identity. 
\durfee*
\begin{proof}
It is clear by definition of $N(\cdot)$ that, 
$$\underline{\mu}^c(\{N(\underline{z})=n\})=\sum\limits_{k=\max\{-n,0\}}^\infty\underline{\mu}^c(\{N^{\overset{\rightarrow}{h}}(\underline{z})=n+k\})\cdot \underline{\mu}^c(\{N^{\overset{\leftarrow}{p}}(\underline{z})=k\}).$$
By \eqref{eq: mu of N} as well as Theorem \ref{thm: half infinite asep particles} and Corollary \ref{cor: c change between holes and particles in ASEP} (for $m=0$) we have that, 
\begin{align*}
\frac{q^{\frac{n(n+1)}{2}-nc}}{\sum\limits_{\ell=-\infty}^\infty q^{\frac{\ell(\ell+1)}{2}-\ell c}}&=\sum\limits_{k=\max\{-n,0\}}^\infty \frac{q^{-(n+k)c+\frac{(n+k)(n+k+1)}{2}}}{(-q^{1-c};q)_\infty(q;q)_{n+k}}\cdot \frac{q^{k(c-1)+\frac{k(k+1)}{2}}}{(-q^c;q)_\infty(q;q)_k}\\
&=\sum\limits_{k=\max\{-n,0\}}^\infty \frac{q^{\frac{n(n+1)}{2}+k(n+k)-nc}}{(-q^{1-c};q)_\infty(q;q)_{n+k}(-q^c;q)_\infty(q;q)_k}.
\end{align*}

By using the Jacobi triple product identity, equation \ref{eq: JTP}, we find that, 
$$\frac{1}{(q;q)_\infty}=\sum\limits_{k=\max\{-n,0\}}^\infty\frac{q^{k(n+k)}}{(q;q)_{n+k}(q;q)_k}.$$
\end{proof}
\begin{remark}
Notice that if we instead consider $\underline{\mu}^c(\{N_{m+\frac{1}{2}}(\underline{z})=n\})$ for any $m\in\mathbb{Z}$ we prove the same identity. This is since $m$ is just a shift for the the state and the factors involving $m$ cancel out. Also since the particle-hole symmetry was used to find $\underline{\nu}^n(\{N^{\overset{\leftarrow}{p}}_{m+\frac{1}{2}}(\underline{z})=k\})$ in Theorem \ref{thm: half infinite ASEP particles under nu n}, if we sum this distribution over $k$ we again recover the Durfee rectangles identity.
\end{remark}
\par By considering the distribution of a single site under $\underline{\nu}^n$ (Lemma \ref{lem: site under nu n}) we find a combinatorial identity that seems to be related to the Durfee rectangle identity.
\crankiden*
\vspace{2mm}\begin{proof}
    Clearly for any $n\in\mathbb{Z}$, 
    $$\underline{\nu}^{n}(\{z_1=1\})+\underline{\nu}^{n}(\{z_1=0\})=1.$$
    By Lemma \ref{lem: site under nu n} and rearranging we find that, 
    $$\sum\limits_{k=\max\{-n,0\}}\frac{q^{(k+1)(n+k)}}{(q;q)_k(q;q)_{n+k}}+\sum\limits_{k=\max\{1-n,0\}}\frac{q^{k(n+k)}}{(q;q)_k(q;q)_{n-1+k}}=\frac{1}{(q;q)_\infty}.$$
\end{proof}
\par We now demonstrate that two other well-known combinatorial identities (Euler's identity and the $q$-Binomial Theorem) arise as consequences of Theorem \ref{thm: half infinite asep particles} and Lemma \ref{lem: finite asep particles}. 
\euler*
\vspace{2mm}\begin{proof} For any $m \in\mathbb{Z}$ and $c \in \mathbb{R}$ it is clear that, 
$\sum\limits_{k=0}^\infty \underline{\mu}^c (\{N_m^{\overset{\leftarrow}{p}}(\underline{z})=k\})=1.$ By Theorem \ref{thm: half infinite asep particles}, that is 
$$\sum\limits_{k=0}^\infty \frac{q^{k(c-m)}q^{\frac{k(k-1)}{2}}}{(q;q)_{k}(-q^{c-m};q)_\infty}=1.$$
If we let $z:=q^{c-m}$ and rearrange we find Euler's identity, 
$$\sum\limits_{k=0}^\infty \frac{q^{\frac{k(k-1)}{2}}z^k}{(q;q)_k}=(-z;q)_\infty.$$
\end{proof}

\par We also see the $q$-Binomial Theorem:
\qbin*
\vspace{2mm}\begin{proof} For any $m_1<m_2\in\mathbb{Z}$ and $c \in \mathbb{R}$ it is clear that, $\sum\limits_{k=0}^{m_2-m_1-1}\underline{\mu}^c(\{N^{\overset{\leftarrow}{p}}_{m_2-1+\frac{1}{2}}(\underline{z})-N^{\overset{\leftarrow}{p}}_{m_{1}+\frac{1}{2}}(\underline{z})=k\})=1.$ By Lemma \ref{lem: finite asep particles}, that is 
$$\sum\limits_{k=0}^{m_2-m_1-1} \frac{q^{k(c+1-m_2)}q^{\frac{k(k-1)}{2}}}{(-q^{c+1-m_2};q)_{m_2-m_1-1}}\begin{bmatrix}
m_2-m_1-1\\
k
\end{bmatrix}_q=1.$$
Now if we let $z:=q^{c+1-m_2}$, $m:=m_2-m_1-1$ and rearrange we have the $q$-binomial theorem, 
$$\sum\limits_{k=0}^mq^{\frac{k(k-1)}{2}}z^k\begin{bmatrix}
m\\
k
\end{bmatrix}_{q}=(-z;q)_m.$$
\end{proof}
All of these proofs are purely probabilistic. In Appendix \ref{appendix: combinatorics} we give alternative, combinatorial proofs to Theorem \ref{thm: half infinite asep particles} and Lemma \ref{lem: finite asep particles} which shed light on why we should expect combinatorial identities from these probabilistic questions.

\section{The asymmetric simple exclusion process with second class particles}
Now we consider the asymmetric simple exclusion process with 2 species of particles, first and second class particles. Both species of particles evolve according to ASEP dynamics obeying the exclusion rule within their own class, with first class particles viewing second class as `holes'. That is if,
\begin{itemize}
    \item a second class particle tries to jump into a space occupied by a first class particle the jump is blocked and nothing changes
    \item a first class particle tries to jump into a space occupied by a second class particle it takes that space and the two particles swap positions. 
\end{itemize}
To construct an ASEP with first class particles and some fixed $d>0$ second class particles we use the basic coupling method. This idea of defining second class particles via a basic coupling is similar to the work of Ferrari, Kipnis and Saada \cite{fer_kip_saa_coupling} and later work of Tracy and Widom \cite{tracy-widom} as well as Bal\'azs and Sepp\"al\"ainen \cite{bal_sepp_coupling} (also work of Liggett \cite{ligg_coupling} in 1976 considered coupling for simple exclusion processes).
\par We will study the behaviour of these second class particles under the blocking measure. 
\subsection{Constructing a Basic Coupling of Two ASEPs} \label{subsect: asep coupling}~
\par To construct a basic coupling we start with an ASEP on $\mathbb{Z}$, $\{\underline{\xi}(t)\}$, and also a process on the particle labels. For some given $d>0$, this label process is defined by an ASEP on the half infinite line $\mathbb{Z}_{\geq 0}$ with exactly $d$ particles moving with a left drift, $\{\underline{\gamma}(t)\}$. The dynamics of $\{\underline{\gamma}(t)\}$ is restricted by the state of $\{\underline{\xi}(t)\}$, such that a particle jump over the edge $(i,i+1)$ in $\underline{\gamma}(t)$ can only occur if the particles $i$ and $i+1$ in $\underline{\xi}(t)$ are nearest neighbours. Lastly we construct a second ASEP on $\mathbb{Z}$, $\{\underline{\eta}(t)\}$,  by at each time removing the particles corresponding to the label process from $\underline{\xi}(t)$. We see that this defines a basic coupling of two ASEPs with the second class particles being defined as the particles corresponding to the label process. We make this procedure precise below.
\subsubsection{ASEP on $\mathbb{Z}$}\label{subsubsect: xi}~
\par We consider an asymmetric simple exclusion process on , 
$\Omega\defeq\{\underline{z}\in\{0,1\}^\mathbb{Z}:N^{\overset{\leftarrow}{p}}(\underline{\eta}), N^{\overset{\rightarrow}{h}}(\underline{\eta})<\infty\}$, under the blocking measure $\underline{\mu}^c$ (as in Section \ref{sect: ASEP}) At time $t\in\mathbb{R}_{\geq 0}$ we will denote the state of the process by $\underline{\xi}(t)\in\Omega$.
\par At any time $t \in \mathbb{R}_{\geq 0}$ we attach to the state $\underline{\xi}(t)$ a labelling of the particles. That is, we enumerate the particles in $\underline{\xi}(t)$ from left to right starting from $0$ for the left most particle and increasing as we go to the right. So a particle's label is simply the number of particles to its left. When we consider second class particles we will be interested in the label of only $d>0$ certain particles.
\subsubsection{ASEP on $\mathbb{Z}_{\geq 0}$ with exactly $d$ particles (Label process)}\label{subsubsect: label process}~
\par Alongside the asymmetric simple exclusion process $\{\underline{\xi}(t)\}_{t\in\mathbb{R}_{\geq 0}}$ we also consider an ASEP on the half infinite integer line, with exactly $d>0$ particles moving with a left drift. That is, an interacting particle system on the state space
$\hat{\Omega}\defeq\{\underline{z}\in\{0,1\}^{\mathbb{Z}_{\geq0}}: \sum\limits_{i=0}^\infty z_i=d\}$,
which evolves according to nearest neighbour particle jumps with jump rates, (for $i\geq 0$)
$$\hat{p}(z_i,z_{i+1})=q\mathbb{I}\{z_i \neq 0\}\mathbb{I}\{z_{i+1}\neq 1\} \hspace{5mm}\text{ and }\hspace{5mm} \hat{q}(z_i,z_{i+1})=\mathbb{I}\{z_{i+1} \neq 0\}\mathbb{I}\{z_{i}\neq 1\}.$$
Note that by the definition of the state space $\hat{\Omega}$ any state has exactly $d$ particles and so there is a closed boundary to the left of site $0$. At time $t\in\mathbb{R}_{\geq 0}$ we will denote the state of this process by $\underline{\gamma}(t)$.
\vspace{1mm}\par Let $\underline{y}(t)=(y_1(t),...,y_d(t))$, with $0\leq y_1(t)<y_2(t)<...<y_d(t)$, be the vector of sites where the particles are in $\underline{\gamma}(t)$, i.e. $\forall j \in \{1,..,d\}$,
$$y_j(t)=i \text{ if } \gamma_i=1 \text{ and } \sum\limits_{k=0}^i\gamma_k=j.$$
\par In fact, we will consider a restriction of the process $\{\underline{\gamma}(t)\}_{t\in\mathbb{R}_{\geq0}}$ by conditioning on the process $\{\underline{\xi}(t)\}_{t\in\mathbb{R}_{\geq0}}$. We say that a particle jump which is possible over the edge $(i,i+1)$ in the half-infinite ASEP can only happen if the particles with labels $i$ and $i+1$ are nearest neighbours at that time in the ASEP on $\Omega$. So, we adjust the jump rates $\hat{p}$ and $\hat{q}$ to, 
\small\begin{align*}
&\hat{p}^*(\gamma_i(t),\gamma_{i+1}(t))=q\mathbb{I}\{\gamma_i(t) \neq 0\}\mathbb{I}\{\gamma_{i+1}(t)\neq 1\}\left(\sum\limits_{j\in\mathbb{Z}}\mathbb{I}\left\{\sum\limits_{k=-\infty}^{j-1}\xi_k(t)=i\right\}\mathbb{I}\{\xi_j(t)=1\}\mathbb{I}\{\xi_{j+1}(t)=1\}\right)\\
\text{ and, }&\hat{q}^*(\gamma_i(t),\gamma_{i+1}(t))=\mathbb{I}\{\gamma_{i+1}(t) \neq 0\}\mathbb{I}\{\gamma_{i}(t)\neq 1\}\left(\sum\limits_{j\in\mathbb{Z}}\mathbb{I}\left\{\sum\limits_{k=-\infty}^{j-1}\xi_k(t)=i\right\}\mathbb{I}\{\xi_j(t)=1\}\mathbb{I}\{\xi_{j+1}(t)=1\}\right).
\end{align*}
\normalsize For this restricted process, let the vector of sites where particles are in $\underline{\gamma}(t)$ be $\underline{x}(t)$. Once we define a coupling and thus define $d$ second class particles, $\{\underline{x}(t)\}_{t\in\mathbb{R}_{\geq 0}}$ will be process that gives the labels for the second class particles.
\par So we have an ASEP $\{\underline{\xi}(t)\}_{t\in\mathbb{R}_{\geq0}}$ with some label process sitting on top where the $d$ particles of interest can swap only when they are neighbouring another particle. For example see Figure \ref{fig: label process} below (the red particles in $\underline{\xi}(t)$ are those which correspond to the process $\{\underline{x}(t)\}_{t\in\mathbb{R}_{\geq 0}}$). In this example the particle jump at sites 2 and 3 is allowed but not the jump at sites 6 and 7 in $\underline{\gamma}(t)$. 
\vspace{10mm}
\begin{figure}[H]
    \centering
    \begin{tikzpicture}[scale=0.6]
    \draw[thick, ->] (-11,0)--(0,0);
    \foreach \x in {-11,-10,-9,-8,-7,-6,-5,-4,-3,-2,-1} \draw[thick, -](\x cm, -0.03)--(\x cm, 0.1);
    \node (a) at (-11,-1) [label= 0]{};
    \node (a) at (-10,-1) [label= 1]{};
    \node (a) at (-9,-1) [label= 2]{};
    \node (a) at (-8,-1) [label= 3]{};
    \node (a) at (-7,-1) [label= 4]{};
    \node (a) at (-6,-1) [label= 5]{};
    \node (a) at (-5,-1) [label= 6]{};
    \node (a) at (-4,-1) [label= 7]{};
    \node (a) at (-3,-1) [label= 8]{};
    \node (a) at (-2,-1) [label= 9]{};
    \node (a) at (-1,-1) [label= 10]{};
    \filldraw[black] (-11,0.5) circle (4pt);
    \filldraw[black] (-10,0.5) circle (4pt);
    \filldraw[black] (-8,0.5) circle (4pt);
    \filldraw[black] (-6,0.5) circle (4pt);
    \filldraw[black] (-5,0.5) circle (4pt);
    \node (a) at (1, -0.5) [label=$\underline{\gamma}(t)$]{};
    \node (a) at (7, -0.5) [label=and so $\underline{x}(t)\equal (0\comma 1\comma 3\comma 5\comma 6)$]{};

    \draw[thick, <->] (-11,-3)--(11,-3);
    \foreach \x in {-10,-9,-8,-7,-6,-5,-4,-3,-2,-1,0,1,2,3,4,5,6,7,8,9,10} \draw[thick, -](\x cm, -3.03)--(\x cm, -2.9)node[anchor=north]{$\x$};
    \filldraw[black] (-7,-2.5) circle (4pt);
    \node(a) at (-7.3,-3) [label=\textcolor{black}{\tiny{2}}]{};
    \filldraw[red] (-8,-2.5) circle (4pt);
     \node(a) at (-8.3,-3) [label=\textcolor{red}{\tiny{1}}]{};
    \filldraw[red] (-9,-2.5) circle (4pt);
     \node(a) at (-9.3,-3) [label=\textcolor{red}{\tiny{0}}]{};
    \filldraw[red] (-6,-2.5) circle (4pt);
    \node(a) at (-6.3,-3) [label=\textcolor{red}{\tiny{3}}]{};
    \filldraw[black] (8,-2.5) circle (4pt);
    \node(a) at (7.7,-3) [label=\textcolor{black}{\tiny{8}}]{};
    \filldraw[black] (9,-2.5) circle (4pt);
    \node(a) at (8.7,-3) [label=\textcolor{black}{\tiny{9}}]{};
    \filldraw[black] (10,-2.5) circle (4pt);
    \node(a) at (9.7,-3) [label=\textcolor{black}{\tiny{10}}]{};
    \filldraw[black] (10.25,-2.75) circle (1pt);
    \filldraw[black] (10.5,-2.75) circle (1pt);
    \filldraw[black] (10.75,-2.75) circle (1pt);
    \filldraw[black] (-4,-2.5) circle (4pt);
    \node(a) at (-4.3,-3) [label=\textcolor{black}{\tiny{4}}]{};
    \filldraw[red] (-1,-2.5) circle (4pt);
    \node(a) at (-1.3,-3) [label=\textcolor{red}{\tiny{5}}]{};
    \filldraw[red] (3,-2.5) circle (4pt);
    \node(a) at (2.7,-3) [label=\textcolor{red}{\tiny{6}}]{};
    \filldraw[black] (5,-2.5) circle (4pt);
    \node(a) at (4.7,-3) [label=\textcolor{black}{\tiny{7}}]{};
    \filldraw[black] (-10.25,-2.75) circle (1pt);
    \filldraw[black] (-10.5,-2.75) circle (1pt);
    \filldraw[black] (-10.75,-2.75) circle (1pt);
    \node (a) at (11.5, -3.5) [label=$\underline{\xi}(t)$]{};
    \draw[->,thick] (-5,1) arc
    [
        start angle=180,
        end angle=0,
        x radius=0.5cm,
        y radius =0.5cm
    ] ;
    \draw[->,thick, red] (-10,1) arc
    [
        start angle=180,
        end angle=0,
        x radius=0.5cm,
        y radius =0.5cm
    ] ;
\node (a) at (-4.5,0.85) [label={\textbf{X}}]{};
\node (a) at (-9.5,1.3) [label=\textcolor{red}{$q$}]{};
\node (a) at (-7.5,-1.7) [label=\textcolor{red}{$q$}]{};
    \draw[->,thick, red] (-7,-2) arc
    [
        start angle=0,
        end angle=180,
        x radius=0.5cm,
        y radius =0.5cm
    ] ;
    \end{tikzpicture}
    \caption{An example of the pair $(\underline{\xi}(t),\underline{x}(t))$ when $d=5$. The red arrows correspond to a possible second class particle label jump.}
    \label{fig: label process}
\end{figure}
\vspace{5mm}
\par We will be interested in where the second class particles are at time $t$. As discussed the red particles in the above figure for example will eventually be the second class particles. We denote the positions of the particles in $\underline{\xi}(t)$ corresponding to the label process $\underline{x}(t)$ by, $\underline{X}(t)=(X_1(t),...,X_d(t))$, i.e. $X_j(t)$ is the position of the particle with label $x_j(t)$ in $\underline{\xi}(t)$. So for each $j\in\{1,...,d\}$,
$$X_j(t)=i \text{ if } \xi_i(t)=1 \text{ and } \sum\limits_{k=0}^{i-1}\xi_k(t)=x_j(t).$$
It is clear to see by construction that at any time $t\geq 0$ we have that $X_1(t)< X_2(t) < ... <X_d(t)$.
\subsubsection{Basic Coupling of Two ASEPs}\label{subsubsect: eta}~
\par We now construct another process $\{\underline{\eta}(t)\}_{t\in\mathbb{R}_{\geq 0}}$, by taking the state of the ASEP at time $t$, $\underline{\xi}(t)$, and removing the particles which correspond to the label process $\underline{x}(t)$. That is, at any time $t\geq 0$,
$$\underline{\eta}(t)\defeq\underline{\xi}(t)-\sum_{j=1}^d\underline{\delta}_{X_j(t)} .$$
For example see Figure \ref{fig: coupling}
below.
\newpage
\begin{figure}[H]
    \centering
    \begin{tikzpicture}[scale=0.6]
      \node (a) at (-8, 2) [label=Suppose $\underline{\xi}(t)$ and $\underline{x}(t)$ are as follows\comma]{};
    \draw[thick, ->] (-11,0)--(0,0);
    \foreach \x in {-11,-10,-9,-8,-7,-6,-5,-4,-3,-2,-1} \draw[thick, -](\x cm, -0.03)--(\x cm, 0.1);
    \node (a) at (-11,-1) [label= 0]{};
    \node (a) at (-10,-1) [label= 1]{};
    \node (a) at (-9,-1) [label= 2]{};
    \node (a) at (-8,-1) [label= 3]{};
    \node (a) at (-7,-1) [label= 4]{};
    \node (a) at (-6,-1) [label= 5]{};
    \node (a) at (-5,-1) [label= 6]{};
    \node (a) at (-4,-1) [label= 7]{};
    \node (a) at (-3,-1) [label= 8]{};
    \node (a) at (-2,-1) [label= 9]{};
    \node (a) at (-1,-1) [label= 10]{};
    \filldraw[black] (-11,0.5) circle (4pt);
    \filldraw[black] (-10,0.5) circle (4pt);
    \filldraw[black] (-8,0.5) circle (4pt);
    \filldraw[black] (-6,0.5) circle (4pt);
    \filldraw[black] (-5,0.5) circle (4pt);
    \node (a) at (1, -0.5) [label=$\underline{\gamma}(t)$]{};
    \node (a) at (7, -0.5) [label=and so $\underline{x}(t)\equal (0\comma 1\comma 3\comma 5\comma 6)$]{};

    \draw[thick, <->] (-11,-3)--(11,-3);
    \foreach \x in {-10,-9,-8,-7,-6,-5,-4,-3,-2,-1,0,1,2,3,4,5,6,7,8,9,10} \draw[thick, -](\x cm, -3.03)--(\x cm, -2.9)node[anchor=north]{$\x$};
    \filldraw[black] (-7,-2.5) circle (4pt);
    \node(a) at (-7.3,-3) [label=\textcolor{black}{\tiny{2}}]{};
    \filldraw[red] (-8,-2.5) circle (4pt);
     \node(a) at (-8.3,-3) [label=\textcolor{red}{\tiny{1}}]{};
    \filldraw[red] (-9,-2.5) circle (4pt);
     \node(a) at (-9.3,-3) [label=\textcolor{red}{\tiny{0}}]{};
    \filldraw[red] (-6,-2.5) circle (4pt);
    \node(a) at (-6.3,-3) [label=\textcolor{red}{\tiny{3}}]{};
    \filldraw[black] (8,-2.5) circle (4pt);
    \node(a) at (7.7,-3) [label=\textcolor{black}{\tiny{8}}]{};
    \filldraw[black] (9,-2.5) circle (4pt);
    \node(a) at (8.7,-3) [label=\textcolor{black}{\tiny{9}}]{};
    \filldraw[black] (10,-2.5) circle (4pt);
    \node(a) at (9.7,-3) [label=\textcolor{black}{\tiny{10}}]{};
    \filldraw[black] (10.25,-2.75) circle (1pt);
    \filldraw[black] (10.5,-2.75) circle (1pt);
    \filldraw[black] (10.75,-2.75) circle (1pt);
    \filldraw[black] (-4,-2.5) circle (4pt);
    \node(a) at (-4.3,-3) [label=\textcolor{black}{\tiny{4}}]{};
    \filldraw[red] (-1,-2.5) circle (4pt);
    \node(a) at (-1.3,-3) [label=\textcolor{red}{\tiny{5}}]{};
    \filldraw[red] (3,-2.5) circle (4pt);
    \node(a) at (2.7,-3) [label=\textcolor{red}{\tiny{6}}]{};
    \filldraw[black] (5,-2.5) circle (4pt);
    \node(a) at (4.7,-3) [label=\textcolor{black}{\tiny{7}}]{};
    \filldraw[black] (-10.25,-2.75) circle (1pt);
    \filldraw[black] (-10.5,-2.75) circle (1pt);
    \filldraw[black] (-10.75,-2.75) circle (1pt);
    \node (a) at (11.5, -3.5) [label=$\underline{\xi}(t)$]{};
    \node (a) at (-11, -5.5) [label=then $\underline{\eta}(t)$ is\comma ]{};
    \draw[thick, <->] (-11,-7)--(11,-7);
    \foreach \x in {-10,-9,-8,-7,-6,-5,-4,-3,-2,-1,0,1,2,3,4,5,6,7,8,9,10} \draw[thick, -](\x cm, -7.03)--(\x cm, -6.9)node[anchor=north]{$\x$};
    \filldraw[black] (-7,-6.5) circle (4pt);
    \filldraw[black] (8,-6.5) circle (4pt);
    \filldraw[black] (9,-6.5) circle (4pt);
    \filldraw[black] (10,-6.5) circle (4pt);
    \filldraw[black] (10.25,-6.75) circle (1pt);
    \filldraw[black] (10.5,-6.75) circle (1pt);
    \filldraw[black] (10.75,-6.75) circle (1pt);
    \filldraw[black] (-4,-6.5) circle (4pt);
    \filldraw[black] (5,-6.5) circle (4pt);
    \filldraw[black] (-10.25,-6.75) circle (1pt);
    \filldraw[black] (-10.5,-6.75) circle (1pt);
    \filldraw[black] (-10.75,-6.75) circle (1pt);
    \node (a) at (11.5, -7.5) [label=$\underline{\eta}(t)$]{};
    \end{tikzpicture}
    \caption{An example of constructing $\underline{\eta}(t)$ from the pair $(\underline{\xi}(t),\underline{x}(t))$ (when $d=5$).}
    \label{fig: coupling}
\end{figure}
 
\begin{lem}\label{lem: basic coupling}
The process $\{\underline{\eta}(t)\}_{t\in\mathbb{R}_{\geq 0}}$ is an ASEP and the pair $\{\underline{\eta}(t), \underline{\xi}(t)\}_{t\in\mathbb{R}_{\geq 0}}$ is a basic coupling of two asymmetric simple exclusion processes which only differ at $d$ many sites at any time (i.e. the processes evolve together with the same Poison clocks describing the particle jumps). 
\end{lem}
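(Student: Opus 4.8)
The plan is to identify the joint process that is actually being run, namely $\{(\underline{\xi}(t),\underline{\gamma}(t))\}$, compute its generator, push it forward to the pair $\{(\underline{\xi}(t),\underline{\eta}(t))\}$, and check that the result is exactly the generator of the basic coupling of two copies of the ASEP from Section \ref{sect: ASEP}. The crucial preliminary observation is that, given $\underline{\xi}\in\Omega$, the map $(\underline{\xi},\underline{\gamma})\mapsto(\underline{\xi},\underline{\eta})$ is a bijection: from $\underline{\eta}=\underline{\xi}-\sum_{j=1}^d\underline{\delta}_{X_j}$ the marked (second class) sites are recovered as exactly those $i$ with $\xi_i=1$ and $\eta_i=0$, and reading off their labels returns $\underline{x}$, hence $\underline{\gamma}$. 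Since a Markov process pushed forward under a bijection is again Markov with the conjugated generator, it suffices to match generators. I also record here the statements that are immediate from the definition: $\underline{\eta}=\underline{\xi}-\sum_{j=1}^d\underline{\delta}_{X_j}$ lies in $\Omega$ (removing the $d$ particles keeps $N^{\overset{\leftarrow}{p}}$ finite and increases $N^{\overset{\rightarrow}{h}}$ by at most $d$), and, since $X_1(t)<\dots<X_d(t)$, the configurations satisfy $\underline{\eta}\le\underline{\xi}$ with equality at every site except the $d$ sites $X_1(t),\dots,X_d(t)$; this already gives the ``differ at $d$ sites'' assertion.

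Next I would classify the transitions of $(\underline{\xi},\underline{\gamma})$ into two families. (i) A jump of $\underline{\xi}$ moves a particle into an adjacent empty site, at rate $1$ to the right and $q$ to the left. Since the exclusion rule never lets two particles exchange order, every such jump leaves all labels unchanged, so it simply transports the marking along with the particle and triggers no $\underline{\gamma}$ event; in the pair $(\underline{\xi},\underline{\eta})$ this realises a first-class particle (unmarked: present in both $\underline{\xi}$ and $\underline{\eta}$) or a second-class particle (marked: present only in $\underline{\xi}$) moving into a site that is empty in both configurations. (ii) A jump of $\underline{\gamma}$ over the edge $(i,i+1)$ is permitted, by the rates $\hat p^{*},\hat q^{*}$, precisely when the particles labelled $i$ and $i+1$ occupy adjacent sites of $\underline{\xi}$; such a jump leaves $\underline{\xi}$ unchanged and toggles the marking between these two adjacent particles. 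This is exactly a swap of a first- and a second-class particle, and it is the only mechanism that can swap them.

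It then remains to check the rates against the basic coupling, in which a rate-$1$ right clock and a rate-$q$ left clock on each edge drive both marginals simultaneously. Family (i) reproduces verbatim the basic-coupling moves in which a particle enters a common hole. For family (ii), a left jump of $\underline{\gamma}$ (rate $\hat q=1$) sends the mark from label $i+1$ to label $i$, i.e. in $\underline{\xi}$ it turns an adjacent pair first-class/second-class into second-class/first-class; this is precisely the basic-coupling right-clock swap, of rate $p=1$, which is blocked in $\underline{\xi}$ but advances the first-class particle of $\underline{\eta}$. Symmetrically, a right jump of $\underline{\gamma}$ (rate $\hat p=q$) is the basic-coupling left-clock swap, of rate $q$. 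The orientation is reversed---a first-class particle overtaking the second-class particle to its right decreases that second-class particle's label---which is exactly why $\underline{\gamma}$ carries the opposite (left) drift to $\underline{\xi}$. Matching (i) and (ii) shows that every transition of the basic coupling occurs at the correct rate and that $(\underline{\xi},\underline{\gamma})$ produces no others, so the two generators coincide. Consequently $\{(\underline{\xi}(t),\underline{\eta}(t))\}$ is the basic coupling, its marginal $\{\underline{\eta}(t)\}$ is an ASEP, and the two components differ at exactly the $d$ sites $X_1(t)<\dots<X_d(t)$.

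The step I expect to require the most care is the bookkeeping in (ii): verifying that the indicator in $\hat p^{*},\hat q^{*}$ (which demands $\sum_{k<j}\xi_k=i$ together with $\xi_j=\xi_{j+1}=1$) is equivalent to ``the particles labelled $i$ and $i+1$ are nearest neighbours'', and then getting the orientation and the two rates of the induced swaps to line up with the correct clocks of the basic coupling. One should also note that the boundary of $\underline{\gamma}$ is automatically consistent: the label-$0$ particle is the globally leftmost particle, so it has no first-class particle to its left to swap with, matching the closed boundary of $\underline{\gamma}$ at $0$.
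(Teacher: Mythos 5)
Your proposal is correct and follows essentially the same route as the paper: a case analysis of the transitions of the driving pair $(\underline{\xi},\underline{\gamma})$, splitting them into jumps of $\underline{\xi}$ (which transport markings and reproduce moves of particles into common holes) and jumps of $\underline{\gamma}$ (which realise the first/second-class swaps), and matching the rates $1$ and $q$ with the correct clocks of the basic coupling. The only addition is your explicit bijection/generator-conjugation framing for why the pushed-forward pair is Markov, which the paper leaves implicit; the rate bookkeeping, including the drift reversal of $\underline{\gamma}$, agrees with the paper's.
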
 
\begin{proof}
First we confirm that $\{\underline{\eta}(t)\}_{t\in\mathbb{R}_{\geq 0}}$ is an asymmetric simple exclusion process. Since $\underline{\xi}(t)$ takes values in the state space $\Omega=\{\underline{z}\in\{0,1\}^\mathbb{Z}:N^{\overset{\leftarrow}{p}}(\underline{\eta}), N^{\overset{\rightarrow}{h}}(\underline{\eta})<\infty\}$ and by definition $\underline{\eta}(t)=\underline{\xi}(t)-\sum_{j=1}^d\underline{\delta}_{X_j(t)}$, it is clear that the process $\{\underline{\eta}(t)\}_{t\in\mathbb{R}_{\geq 0}}$ also has state space $\Omega$. Now we will show that the process $\{\eta(t)\}_{t\in\mathbb{R}_{\geq 0}}$ evolves according to ASEP dynamics; that is each particle jumps left with rate $q$ and right with rate $1$ unless the jump is blocked. Let us consider the following possible cases for particle jumps in $\{\underline{\xi}(t)\}_{t\in\mathbb{R}_{\geq 0}}$:
\begin{itemize}
    \item A particle associated to the label process (red particles in Figure \ref{fig: coupling}) makes a particle jump. When this happens the particles in $\underline{\xi}(t)$ that are not associated with the label process $\underline{x}(t)$ are not affected and so there is no change to $\underline{\eta}(t)$.
    \item A particle not associated to the label process makes a particle jump. When this jump happens in $\underline{\xi}(t)$ it will happen in $\underline{\eta}(t)$ with the same jump rate ($q$ to the left and $1$ to the right). 
    \item Lastly, the label process may evolve resulting in the swapping of neighbouring particles in $\underline{\xi}(t)$ (one associated with the label process and one that is not). 
    \begin{itemize}
    \item Suppose a particle at site $i\geq 0$ in $\underline{\gamma}(t)$ jumps right (meaning particle $i$ and $i+1$ are neighbours in $\underline{\xi}(t)$ say at sites $j$ and $j+1$). Then in $\underline{\xi}(t)$ this results in particle $i$ no longer being associated with the label process and particle $i+1$ becomes associated with the label process. That is in $\underline{\eta}(t)$ the particle at site $j+1$ jumps to the left with rate equal to a right jump in $\{\underline{\gamma}(t)\}_{t\in\mathbb{R}_{\geq 0}}$ so with rate $q$.
    \item Similarly suppose that the particle at site $i>0$ in $\underline{\gamma}(t)$ jumps left (meaning particle $i-1$ and $i$ are neighbours in $\underline{\xi}(t)$ say at sites $j$ and $j+1$). Then in $\underline{\xi}(t)$ this results in particle $i$ no longer being associated with the label process and particle $i-1$ becomes associated with the label process. That is in $\underline{\eta}(t)$ the particle at site $j$ jumps to the right with rate equal to a left jump in $\{\underline{\gamma}(t)\}_{t\in\mathbb{R}_{\geq 0}}$ so with rate $1$.
    \end{itemize}
\end{itemize}
Thus $\{\underline{\eta}(t)\}_{t \in\mathbb{R}_{\geq 0}}$ evolves according to particle jumps at rates, 

\small\begin{align*}
p(\eta_i(t),\eta_{i+1}(t))=\mathbb{I}\{\xi_i(t) \neq 0\}&\mathbb{I}\{\xi_{i+1}(t)\neq 1\}\\
&+\mathbb{I}\{\gamma_{\sum\limits_{j=-\infty}^{i}\xi(t)}(t)\neq 0\}\mathbb{I}\{\gamma_{\sum\limits_{j=-\infty}^{i}\xi(t)-1}(t)\neq 1 \}\mathbb{I}\{\xi_i(t)=1\}\mathbb{I}\{\xi_{i+1}(t)=1\}
\end{align*}
\normalsize and, 
\small \begin{align*}
q(\eta_i(t),\eta_{i+1}(t))=q\Big(\mathbb{I}\{\xi_{i+1}(t) \neq 0\}&\mathbb{I}\{\xi_{i}(t)\neq 1\}\\
&+\mathbb{I}\{\gamma_{\sum\limits_{j=-\infty}^{i-1}\xi(t)}(t)\neq 0 \}\mathbb{I}\{\gamma_{\sum\limits_{j=-\infty}^{i-1}\xi(t)+1}(t)\neq 1\}\mathbb{I}\{\xi_i(t)=1\}\mathbb{I}\{\xi_{i+1}(t)=1\}\Big)
\end{align*} 

\normalsize \noindent or equivalently written only in terms of $\underline{\eta}(t)$,
$$p(\eta_i(t),\eta_{i+1}(t))=\mathbb{I}\{\eta_i(t) \neq 0\}\mathbb{I}\{\eta_{i+1}(t)\neq 1\} \hspace{3mm}\text{ and }\hspace{3mm} q(\eta_i(t),\eta_{i+1}(t))=q\mathbb{I}\{\eta_{i+1}(t) \neq 0\}\mathbb{I}\{\eta_{i}(t)\neq 1\}.$$
\par By construction it is clear that the two processes $\{\underline{\eta}(t)\}_{t\in\mathbb{R}_{\geq 0}}$ and $\{\underline{\xi}(t)\}_{t\in\mathbb{R}_{\geq 0}}$ evolve together according to ASEP dynamics and thus the pair gives a basic coupling.
\end{proof}
\par This coupling defines the asymmetric exclusion process with $d$ second class particles. Here, $\underline{x}(t)$ is the label process for the second class particles and $\underline{X}(t)$ gives the positions of the second class particles at time $t$.

\subsection{Distributional results for the basic coupling}\label{subsect: coupling dist results}~
\par Let us now look at some distributional results for the process with second class particles given we know the distribution of the ASEP with only first class particles.
\begin{lem}\label{lem: nu^n to nu^n+d}
With the relation $\underline{\eta}=\underline{\xi}-\sum\limits_{j=1}^d\underline{\delta}_{X_j}$, when we reach stationarity for the coupling, we have that, 
$$\underline{\xi} \sim \underline{\nu}^n \hspace{1mm}\Longleftrightarrow\hspace{1mm} \underline{\eta}\sim\underline{\nu}^{n+d}.$$
\end{lem}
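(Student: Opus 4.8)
The plan is to reduce the biconditional to a deterministic conservation law relating the two coupled configurations, combined with the uniqueness of the stationary measure on each irreducible component $\Omega^k$. First I would record that, for \emph{any} fixed pair satisfying $\underline{\eta}=\underline{\xi}-\sum_{j=1}^d\underline{\delta}_{X_j}$, the conserved quantities differ by exactly $d$. Writing $N(\underline{z})=\sum_{i\geq 1}(1-z_i)-\sum_{i\leq 0}z_i$ (the ASEP case $\omega^\text{min}=0$, $\omega^\text{max}=1$), the holes-minus-particles expression collapses into a single sum over all of $\mathbb{Z}$, giving
\begin{equation*}
N(\underline{\eta})-N(\underline{\xi})=\sum_{i\in\mathbb{Z}}(\xi_i-\eta_i)=\sum_{i\in\mathbb{Z}}\sum_{j=1}^d\mathbb{I}\{X_j=i\}=d.
\end{equation*}
This is a configuration-level identity with no probability involved; in particular $N(\underline{\xi})=n$ holds if and only if $N(\underline{\eta})=n+d$.

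Next I would invoke the structure established in Lemma \ref{lem: basic coupling}: since $\{\underline{\eta}(t)\}$ is itself an ASEP on $\Omega$ and $\{\underline{\xi}(t)\}$ is an ASEP on $\Omega$, once the full coupled system is stationary both marginals are stationary ASEPs. Recall that $\underline{\nu}^k$ is, by definition, the unique stationary distribution supported on the irreducible component $\Omega^k=\{N=k\}$. For the forward direction, assume $\underline{\xi}\sim\underline{\nu}^n$; then $N(\underline{\xi})=n$ almost surely, the conservation identity forces $N(\underline{\eta})=n+d$ almost surely, so the stationary law of $\underline{\eta}$ is concentrated on $\Omega^{n+d}$, whence by uniqueness it must equal $\underline{\nu}^{n+d}$. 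The reverse direction is symmetric: $\underline{\eta}\sim\underline{\nu}^{n+d}$ gives $N(\underline{\eta})=n+d$ a.s., hence $N(\underline{\xi})=n$ a.s., and the stationary law of $\underline{\xi}$, being concentrated on $\Omega^n$, equals $\underline{\nu}^n$.

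I do not anticipate a genuine obstacle; the two points needing care are (i) the configuration-level conservation law, where one checks that removing a particle to the right of the reference site creates a counted hole while removing one to the left deletes a counted particle, both raising $N$ by one, and (ii) the passage from stationarity of the full coupling to stationarity of each marginal, which is precisely what Lemma \ref{lem: basic coupling} supplies by identifying $\{\underline{\eta}(t)\}$ as an ASEP. With these in hand the biconditional is immediate from the uniqueness of $\underline{\nu}^k$ on each $\Omega^k$.
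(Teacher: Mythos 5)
Your proposal is correct and follows essentially the same route as the paper: a configuration-level computation showing $N(\underline{\eta})=N(\underline{\xi})+d$ (the paper writes $\underline{\xi}=\underline{\eta}+\sum_j\underline{\delta}_{X_j}$ and expands $N(\underline{\xi})$ directly, arriving at the same identity), followed by an appeal to the uniqueness of the stationary distribution $\underline{\nu}^k$ on each irreducible component $\Omega^k$. Your slightly more explicit remarks on why both marginals are stationary ASEPs are a reasonable elaboration of what the paper leaves implicit.
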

\vspace{2mm}\begin{proof}
This is clear by definition of $\underline{\nu}^n$ and since,
\begin{align*}
    N(\underline{\xi})&=\sum\limits_{i=1}^\infty (1-\xi_i)-\sum\limits_{i=-\infty}^0\xi_i\\
    &=\sum\limits_{i=1}^\infty (1-\{\underline{\eta}+\sum\limits_{j=1}^d\underline{\delta}_{X_j}\}_i)-\sum\limits_{i=-\infty}^0\{\underline{\eta}+\sum\limits_{j=1}^d\underline{\delta}_{X_j}\}_i\\
    &=N(\underline{\eta})-\sum\limits_{j=1}^d\sum\limits_{i=-\infty}^\infty(\underline{\delta}_{X_j})_i\\
    &=N(\underline{\eta})-d.
\end{align*}
Since $\underline{\nu}^n$ is the unique stationary distribution for the marginal process conditioned on having conserved quantity $n$ it holds that $\underline{\xi} \sim \underline{\nu}^n \hspace{1mm}\Longleftrightarrow\hspace{1mm} \underline{\eta}\sim\underline{\nu}^{n+d}$.
\end{proof}
\begin{lem} \label{lem: mu^c to mu^c-d}
With the relation $\underline{\eta}=\underline{\xi}-\sum\limits_{j=1}^d\underline{\delta}_{X_j}$, when we reach stationarity for the coupling we have that, 
$$\underline{\xi} \sim \underline{\mu}^{c-d} \hspace{1mm}\Longleftrightarrow\hspace{1mm} \underline{\eta}\sim\underline{\mu}^c.$$
\end{lem}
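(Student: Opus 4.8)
The plan is to exploit the ergodic decomposition of the blocking measures into their conditional laws on the conserved-quantity components and to track how the coupling acts on each component. Recall from Section~\ref{subsect: ergodic decomp} that any blocking measure decomposes as $\underline{\mu}^c(\cdot)=\sum_{n\in\mathbb{Z}}\underline{\mu}^c(\{N=n\})\,\underline{\nu}^n(\cdot)$, where $\underline{\nu}^n$ is the \emph{unique} stationary law on the irreducible component $\Omega^n$ and therefore does not depend on $c$. The entire $c$-dependence of $\underline{\mu}^c$ sits in the mixing weights $\underline{\mu}^c(\{N=n\})$ given by \eqref{eq: mu of N}, while the building blocks $\underline{\nu}^n$ are shared by $\underline{\mu}^{c-d}$ and $\underline{\mu}^c$. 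This is exactly what makes the two sides of the claimed equivalence comparable, and it is where Lemma~\ref{lem: nu^n to nu^n+d} will enter.

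First I would prove the forward direction. Assuming the stationary coupling has $\underline{\xi}\sim\underline{\mu}^{c-d}$, I would fix a cylinder event $A$ in the $\underline{\eta}$-configurations and condition on the value of the conserved quantity for $\underline{\xi}$, writing $\mathbb{P}(\underline{\eta}\in A)=\sum_n \mathbb{P}(\underline{\eta}\in A\mid N(\underline{\xi})=n)\,\underline{\mu}^{c-d}(\{N=n\})$. On the event $\{N(\underline{\xi})=n\}$ we have $\underline{\xi}\sim\underline{\nu}^n$, so Lemma~\ref{lem: nu^n to nu^n+d} forces $\underline{\eta}\sim\underline{\nu}^{n+d}$, giving $\mathbb{P}(\underline{\eta}\in A\mid N(\underline{\xi})=n)=\underline{\nu}^{n+d}(A)$. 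Reindexing by $m=n+d$ then yields $\mathbb{P}(\underline{\eta}\in A)=\sum_m \underline{\mu}^{c-d}(\{N=m-d\})\,\underline{\nu}^m(A)$, so $\underline{\eta}\sim\underline{\mu}^c$ precisely when the mixing weights match.

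The crux is therefore the weight identity $\underline{\mu}^{c-d}(\{N=m-d\})=\underline{\mu}^c(\{N=m\})$. I would prove this by direct substitution into \eqref{eq: mu of N}: after a shift of the summation index in the normalising theta-sum of $\underline{\mu}^{c-d}$, a common factor $q^{dc-\frac{d(d+1)}{2}}$ appears in both numerator and denominator and cancels, leaving exactly $\underline{\mu}^c(\{N=m\})$. A cleaner alternative, avoiding the theta-sum bookkeeping, is to iterate Corollary~\ref{cor: shift in c shifts the limits in N}(c) $d$ times, starting from $N=N_{0+\frac12}$, to obtain $\underline{\mu}^{c-d}(\{N=k\})=\underline{\mu}^c(\{N_{d+\frac12}=k\})$, and then to apply Lemma~\ref{lem: mu^c(N_m=n)} with $m=d$ to rewrite the right-hand side as $\underline{\mu}^c(\{N=k+d\})$; taking $k=m-d$ gives the identity.

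Finally, the reverse implication follows by running the same conditioning argument in the other direction, using that Lemma~\ref{lem: nu^n to nu^n+d} is itself an equivalence and that the weight identity is symmetric. The only point requiring care is the conditioning step: I must check that the events $\{N(\underline{\xi})=n\}$ have positive probability and partition $\Omega$, and that the $\underline{\nu}^m$ are mutually singular (being supported on the disjoint $\Omega^m$), so that the mixture representation of the law of $\underline{\eta}$ is unique and matching the weights genuinely identifies the measure. I do not expect any real obstacle here; the weight identity is a short computation, and the substance of the argument is simply organising the ergodic decomposition so that Lemma~\ref{lem: nu^n to nu^n+d} applies component by component.
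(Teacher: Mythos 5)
Your proposal is correct and follows essentially the same route as the paper: decompose into the ergodic components $\Omega^n$, identify the conditional laws via the uniqueness of $\underline{\nu}^n$ (Lemma \ref{lem: nu^n to nu^n+d}), and match the mixing weights $\underline{\mu}^{c-d}(\{N=m-d\})=\underline{\mu}^c(\{N=m\})$. The only cosmetic difference is that the paper verifies this weight identity by applying $\tau^d$ to the states and invoking Lemma \ref{lem: shift by tau is a shift in c}, whereas you substitute directly into \eqref{eq: mu of N} (or iterate Corollary \ref{cor: shift in c shifts the limits in N}); all three computations are equivalent.
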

\vspace{2mm}\begin{proof}
First let us show that $\underline{\xi}\sim \underline{\mu}^{c-d}\Rightarrow \underline{\eta}\sim\underline{\mu}^c$. Assume $\underline{\xi}\sim\underline{\mu}^{c-d}$ and consider,
$$\mathbb{P}(\underline{\eta}=z)=\mathbb{P}\left(\underline{\eta}=\underline{z}, N(\underline{\eta})=N(\underline{z})\right)=\underline{\nu}^{N(\underline{z})}(\underline{z}) \cdot \mathbb{P}\left(N(\underline{\eta})=N(\underline{z})\right).$$
Let $\tau$ denote the left shift $(\tau\underline{y})_i=y_{i+1}$, by Lemma \ref{lem: shift by tau is a shift in c} we have that $\underline{\mu}^c(\tau^j\underline{y})=\underline{\mu}^{c+j}(\underline{y})$, and so
\begin{align*}
    \mathbb{P}\left(N(\underline{\eta})=N(\underline{z})\right)&=\mathbb{P}\left(N(\underline{\xi})=N(\underline{z})-d\right)\\
    &=\sum\limits_{\underline{y}:N(\underline{y})=N(\underline{z})-d} \underline{\mu}^{c-d}(\underline{y})\\
    &=\sum\limits_{\underline{u}:N(\underline{u})=N(\underline{z})} \underline{\mu}^{c-d}(\tau^d\underline{u})\\
    &=\sum\limits_{\underline{u}:N(\underline{u})=N(\underline{z})} \underline{\mu}^{c}(\underline{u})\\
    &=\underline{\mu}^c\left(\{N(\underline{\eta})=N(\underline{z})\}\right).
\end{align*}
Hence, $\mathbb{P}(\underline{\eta}=\underline{z})=\underline{\nu}^{N(\underline{z})}(\underline{z}) \cdot \underline{\mu}^c\left(\{N(\underline{\eta})=N(\underline{z})\}\right)= \underline{\mu}^c(\underline{z})$.
By a similar argument we can show that $\underline{\eta}\sim\underline{\mu}^c \Rightarrow \underline{\xi} \sim \underline{\mu}^{c-d}$.
\end{proof}
\vspace{3mm}\subsection{Stationary Distribution for the Label Process}\label{subsect: asep label process}~
\par Let us now look at stationarity for the label process for the second class particles, $\underline{x}(t)$. By construction, $\underline{x}(t)$ gives the positions of the particles in a half infinite ASEP with exactly $d$ particles, $\underline{\gamma}(t)$,  at time $t$. The evolution of this half infinite ASEP depends on the state of the ASEP on $\mathbb{Z}$, $\underline{\xi}(t)$, as previously described. 
\par The stationary distribution for the label process $\{\underline{x}(t)\}_{t\in\mathbb{R}_{\geq 0}}$ is as given below, where we use the notation, $Z^d_{+}\defeq\{\underline{x} \in \mathbb{Z}^d: 0\leq x_1<x_2<...<x_d\}$.
\begin{thm} \label{thrm: label process stationary dist} The process $\underline{x}(t)$ has reversible stationary distribution $\underline{\pi}$ where,
$$\underline{\pi}(\underline{x})=\prod\limits_{i=1}^d\left(1-q^i\right)\cdot q^{\sum\limits_{i=1}^d x_i-\frac{d(d-1)}{2}} \hspace{10mm} \text{for any } \underline{x} \in Z_+^d.$$
\end{thm}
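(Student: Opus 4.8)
The plan is to treat $\{\underline{x}(t)\}$ as the position process of a half-infinite, left-drift ASEP carrying exactly $d$ particles behind a closed wall to the left of site $0$, and to prove the formula by exhibiting $\underline{\pi}$ as its reversible measure via detailed balance. Reading off the (restricted) rates $\hat{p}^*,\hat{q}^*$ of Section~\ref{subsubsect: label process}, the admissible elementary moves of $\underline{x}$ are $x_j\mapsto x_j+1$ at rate $q$ and $x_j\mapsto x_j-1$ at rate $1$, each permitted only when the target label-site is vacant (so that $0\le x_1<\cdots<x_d$ is preserved) and when the two labels involved sit on neighbouring particles of $\underline{\xi}$. Setting the last condition aside for the moment, detailed balance for a rightward move $\underline{x}\mapsto\underline{x}'$ (which raises $\sum_i x_i$ by one) reads $\pi(\underline{x})\,q=\pi(\underline{x}')\cdot 1$; since the candidate satisfies $\pi(\underline{x}')/\pi(\underline{x})=q$ by construction, this holds for every such pair, so $\underline{\pi}\propto q^{\sum_i x_i}$ is reversible for the unrestricted dynamics.

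To fix the normalising constant I would compute $Z:=\sum_{\underline{x}\in Z_+^d}q^{\sum_i x_i}$. Substituting $x_i=y_i+(i-1)$ converts the strict chain $x_1<\cdots<x_d$ into the weak one $0\le y_1\le\cdots\le y_d$ and gives $\sum_i x_i=\sum_i y_i+\tfrac{d(d-1)}{2}$, whence
$$Z=q^{\frac{d(d-1)}{2}}\sum_{0\le y_1\le\cdots\le y_d}q^{\sum_i y_i}=q^{\frac{d(d-1)}{2}}\prod_{i=1}^d\frac{1}{1-q^i},$$
using the standard generating function for partitions into at most $d$ parts. Dividing $q^{\sum_i x_i}$ by $Z$ then reproduces the claimed expression $\underline{\pi}(\underline{x})=\prod_{i=1}^d(1-q^i)\,q^{\sum_i x_i-d(d-1)/2}$.

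The \textbf{main obstacle} is to argue that the $\underline{\xi}$-dependent restriction does not disturb reversibility, since $\{\underline{x}(t)\}$ is not autonomous but driven by the evolving environment $\underline{\xi}(t)$. The key facts I would invoke are that (i) a label transition merely swaps the second-class designation between two adjacent particles of $\underline{\xi}$ and hence leaves $\underline{\xi}$ itself unchanged, and (ii) the exclusion rule forbids particles of $\underline{\xi}$ from ever overtaking one another, so the genuine ASEP moves of $\underline{\xi}(t)$ preserve every label and never by themselves change $\underline{x}$. Consequently the permission indicator attached to a move over the edge $(i,i+1)$ is identical for that move and for its reverse (the two labels remain neighbours after the swap), so it enters both sides of the detailed balance equation symmetrically and cancels. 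Detailed balance therefore continues to hold edge-by-edge for the restricted generator with the same $\underline{\pi}$, irrespective of the time-varying pattern of permissions dictated by $\underline{\xi}$; reversibility and stationarity of $\underline{\pi}$ for $\{\underline{x}(t)\}$ follow, and irreducibility of the allowed moves (inherited from the coupled dynamics) yields uniqueness.
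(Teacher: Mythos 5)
Your proposal is correct and follows essentially the same strategy as the paper: verify detailed balance for the unrestricted half-infinite left-drift ASEP, normalise, and then argue that the $\underline{\xi}$-dependent permission factors do not destroy reversibility. Two points of comparison are worth recording. First, your normalisation via the substitution $x_i=y_i+(i-1)$ and the generating function $\prod_{i=1}^d(1-q^i)^{-1}$ for partitions into at most $d$ parts is cleaner than the paper's iterated geometric sums, and both give the same constant. Second, the handling of the environment is where the paper does genuine extra work that your sketch compresses. You correctly identify the two essential facts: the permission indicator for a jump over the edge $(i,i+1)$ is symmetric under exchanging which of the two adjacent particles carries the label, so it multiplies both sides of each detailed balance equation equally (this is exactly the content of Liggett's Proposition 5.10 on symmetric modification of reversible rates); and the moves of $\underline{\xi}$ never displace $\underline{x}$, so there is no feedback from the label process into the environment. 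But to convert ``$\underline{\pi}$ is reversible for the frozen generator at each instant'' into ``$\underline{\pi}$ is stationary for the actual driven process'' one must condition on the trajectory of $\underline{\xi}$ and cope with the fact that the environment changes at infinitely many times; the paper does this by introducing the spatially truncated process $\underline{x}^m$, choosing a time partition on which the relevant portion of $\underline{\xi}$ is constant, applying the frozen-environment result on each interval, and then letting $m\to\infty$ via the coupling $\underline{x}^m(t)\to\underline{x}(t)$. Your argument should be upgraded to include this approximation step (or an equivalent one); as written it asserts the conclusion of that step rather than proving it. Also note that the final claim about uniqueness via irreducibility is not part of the theorem and is not obviously available, since at a given time the environment may forbid all moves.
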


\begin{proof}
  (We adapt the proofs of Bal\'azs and Sepp\"al\"ainen \cite{bal_sepp_coupling}, Lemma 4.1 and also \cite{bal_sepp_annals}, Proposition 3.1).
\par \vspace{1mm} \noindent First we show that, $\sum\limits_{\underline{x}\in Z_+^d}\underline{\pi}(\underline{x})=1$.
\begin{align*}
        \sum\limits_{\underline{x}\in Z_+^d}\underline{\pi}(\underline{x})&=\prod\limits_{i=1}^d(1-q^i)q^{-\frac{d(d-1)}{2}}\cdot\sum\limits_{\underline{x}\in Z_+^d}q^{\sum\limits_{i=1}^d x_i} \\
        &= \prod\limits_{i=1}^d(1-q^i)q^{-\frac{d(d-1)}{2}}\cdot\sum\limits_{x_1=0}^\infty\sum\limits_{x_2=x_1+1}^\infty \cdots \sum\limits_{x_d=x_{d-1}+1}^\infty q^{\sum\limits_{i=1}^d x_i} \\
    &= \prod\limits_{i=1}^d(1-q^i)q^{-\frac{d(d-1)}{2}}\cdot\sum\limits_{x_1=0}^\infty \cdots \sum\limits_{x_{d-1}=x_{d-2}+1}^\infty\sum\limits_{n=1}^\infty q^{\sum\limits_{i=1}^{d-2} x_i+2x_{d-1}+n}\\
    &\vdots \\
    &= \prod\limits_{i=1}^d(1-q^i)q^{-\frac{d(d-1)}{2}}\cdot\sum\limits_{x_1=0}^\infty q^{dx_1}\sum\limits_{n_{d-1}=1}^\infty q^{(d-1)n_{d-1}}\cdots \sum\limits_{n_1=1}^\infty q^{n_1}\\
    &=\prod\limits_{i=1}^d(1-q^i)q^{-\frac{d(d-1)}{2}}\cdot \frac{1}{1-q^d}\prod\limits_{i=1}^{d-1}\frac{q^i}{1-q^i}\\
    &=q^{-\frac{d(d-1)}{2}}\cdot q^{\sum\limits_{i=1}^{d-1}i}=1.
\end{align*}
\par Let's consider the label process without the restrictions on the evolution given by the underlying asymmetric exclusion process on $\mathbb{Z}$, $\{\underline{\xi}(t)\}_{t\in\mathbb{R}_{\geq 0}}$. That is the process $\underline{y}(t)$, which gives the positions of the particles in an ASEP on $\mathbb{Z}_{\geq 0}$ with exactly $d$ particles and closed left boundary, $\{\underline{\gamma}(t)\}_{t \in \mathbb{R}_{\geq 0}}$. The jump rates for this process are (for $i\geq 0$), 
$$\hat{p}(z_i,z_{i+1})=q\mathbb{I}\{z_i \neq 0\}\mathbb{I}\{z_{i+1}\neq 1\} \hspace{5mm}\text{ and }\hspace{5mm} \hat{q}(z_i,z_{i+1})=\mathbb{I}\{z_{i+1} \neq 0\}\mathbb{I}\{z_{i}\neq 1\}.$$
For $j\in\{1,..,d\}$, let $\underline{e}_j$ be the $d$ long vector such that $(\underline{e}_j)_k=\mathbb{I}\{k=j\}$. When the $j^\text{th}$ particle jumps left  then its position is decreased by 1, $\underline{y}(t)\mapsto \underline{y}(t)-\underline{e}_j$. Similarly, if the $j^\text{th}$ particle jumps right then its position increases by 1, $\underline{y}(t)\mapsto \underline{y}(t)+\underline{e}_j$. 
\par We now show that,
$$\underline{\pi}(\underline{y})=\prod\limits_{i=1}^d\left(1-q^i\right)\cdot q^{\sum\limits_{i=1}^d y_i-\frac{d(d-1)}{2}} \hspace{10mm} \text{for any } \underline{y} \in Z_+^d,$$
is stationary for $\{\underline{y}(t)\}_{t\in\mathbb{R}_{t\geq 0}}$. To do this we check that detailed balance holds; that is for any $\underline{y}\in Z_+^d$ and $j\in\{1,...,d\}$, we check that,
$$\underline{\pi}(\underline{y})q\mathbb{I}\{z_i\neq 0\}\mathbb{I}\{z_{i+1}\neq 1\}=\underline{\pi}(\underline{y}+\underline{e}_j)\mathbb{I}\{z^{(i,i+1)}_{i+1}\neq 0\}\mathbb{I}\{z^{(i,i+1)}_{i}\neq 1\},$$
where $\underline{z}\in\hat{\Omega}$ is the current state of $\{\underline{\gamma}(t)\}_{t\in\mathbb{R}_{\geq 0}}$ with the $j^\text{th}$ particle at site $i\geq 0$. This trivially holds when a jump is not possible over the edge $(i,i+1)$. Now consider the case when $z_i=1$ and $z_{i+1}=0$, 
\begin{align*}
\underline{\pi}(\underline{y})q&=\prod\limits_{i=1}^d\left(1-q^i\right)\cdot q^{\sum\limits_{i=1}^d y_i-\frac{d(d-1)}{2}}\cdot q\\
&= \prod\limits_{i=1}^d\left(1-q^i\right)\cdot q^{\sum\limits_{i=1}^d y_i+1-\frac{d(d-1)}{2}}\\
&=\prod\limits_{i=1}^d\left(1-q^i\right)\cdot q^{\sum\limits_{i=1}^d (\underline{y}+\underline{e}_j)_i-\frac{d(d-1)}{2}}\\
&= \underline{\pi}(\underline{y}+\underline{e}_j).  
\end{align*}

\par Now we re-introduce the restriction coming from the process $\{\underline{\xi}(t)\}_{t\in\mathbb{R}_{\geq 0}}$. Consider an approximating process $\underline{x}^m(t)$ for some $m\in\mathbb{Z}_{\geq0}$ which has the same initial value, $\underline{x}^m(0)=\underline{x}(0)$. The process $\{\underline{x}^m(t)\}_{t\in\mathbb{R}_{\geq 0}}$ evolves so that the underlying process $\{\underline{\xi}(t)\}_{t\in\mathbb{R}_{\geq 0}}$ only restricts its motion between states in the range $\{\underline{x}\in Z_+^d:x_d<m\}$. We couple the processes together so that $\underline{x}(t)=\underline{x}^m(t)$ until the first time that one of them exits the range $\{\underline{x}\in Z_+^d:x_d<m\}$.
\par Suppose we fix $m$ for now. Let $0=t_0<t_1<t_2<...$ be a partition of time such that $t_j\nearrow \infty$ and the state of the process $\underline{\xi}(t)$ on the sites $(-\infty,j_m(t_i)]$ (where $j_m(t_i)$ is the site of the $m^\text{th}$ particle at time $t_i$) is constant on each time interval $t\in[t_i,t_{i+1})$. Then on each time interval $[t_i,t_{i+1})$ the process $\underline{x}^m(t)$ is a continuous time Markov chain with jump rates, (for $k\in\{1,...,d\}$) when $\underline{x}+\underline{e}_k\in Z^+_d$,
\small $$\rho(\underline{x},\underline{x}+\underline{e}_k)=\begin{cases}
q\mathbb{I}\{\gamma_{x_k}(t)=1\}\mathbb{I}\{\gamma_{x_k+1}=0\}\left(\sum\limits_{j\in\mathbb{Z}}\mathbb{I}\left\{\sum\limits_{\ell=-\infty}^{j-1}\xi_\ell(t)=k\right\}\mathbb{I}\{\xi_j(t)=1\}\mathbb{I}\{\xi_{j+1}=1\}\right) &\text{ if } x_k\leq m\\
q\mathbb{I}\{\gamma_{x_k}(t)=1\}\mathbb{I}\{\gamma_{x_k+1}=0\} &\text{ if } x_k> m,
\end{cases}$$
\normalsize with this rate being $0$ if $\underline{x}+\underline{e}_k\notin Z^+_d$; and when $\underline{x}-\underline{e}_k\in Z^+_d$,
\small$$\rho(\underline{x},\underline{x}-\underline{e}_k)=\begin{cases}
\mathbb{I}\{\gamma_{x_k}(t)=1\}\mathbb{I}\{\gamma_{x_k-1}=0\}\left(\sum\limits_{j\in\mathbb{Z}}\mathbb{I}\left\{\sum\limits_{\ell=-\infty}^{j-1}\xi_\ell(t)=k-1\right\}\mathbb{I}\{\xi_j(t)=1\}\mathbb{I}\{\xi_{j+1}=1\}\right) &\text{ if } x_k\leq m+1\\
\mathbb{I}\{\gamma_{x_k}(t)=1\}\mathbb{I}\{\gamma_{x_k-1}=0\} &\text{ if } x_k> m+1,
\end{cases}$$
\normalsize with this rate being $0$ when $\underline{x}-\underline{e}_k\notin Z^+_d$.
\par Since in the time intervals $[t_i,t_{i+1})$ the background giving the restriction of $\{\underline{y}(t)\}_{t\in\mathbb{R}_{\geq 0}}$ to $\{\underline{x}^m(t)\}_{t\in\mathbb{R}_{\geq 0}}$ is fixed we are in the setting of Proposition 5.10 of Liggett [II.,\cite{liggett}] and so on each interval $t\in[t_i,t_{i+1})$ the measure $\underline{\pi}$ is also reversible stationary for $\underline{x}^m(t)$. Thus $\underline{\pi}$ is reversible stationary for $\{\underline{x}^m(t)\}_{t\in\mathbb{R}_{\geq 0}}$.
\par The coupling between $\{\underline{x}^m(t)\}_{t\in\mathbb{R}_{\geq 0}}$ and $\{\underline{x}(t)\}_{t\in\mathbb{R}_{\geq 0}}$ gives us that $\underline{x}^m(t)\xrightarrow[m\rightarrow\infty]{}\underline{x}(t)$ a.s.\ , thus the measure $\underline{\pi}$ is also reversible stationary for the process $\{\underline{x}(t)\}_{t\in\mathbb{R}_{\geq 0}}$.
\end{proof}

\vspace{2mm}\begin{remark}
From the proof of the above theorem we see that the distribution of the labels of the second class particles is independent of the distribution of the process $\{\underline{\xi}(t)\}_{t\in\mathbb{R}_{\geq 0}}$.
\end{remark}
Using the coupling we give an alternative proof of Theorem \ref{thm: half infinite asep particles} (the distribution of the number of particles in a half infinite volume).

\vspace{2mm}\begin{proof}[Alternative proof of Theorem \ref{thm: half infinite asep particles}]
We prove this by considering the coupling for the case of a single second class particle, $\underline{\eta}=\underline{\xi}-\underline{\delta}_X$. Under the coupling we see that if there are $k$ particles to the left of site $m+1$ in $\underline{\eta}$ this is the same as one of two possible events in $\underline{\xi}$:
\begin{itemize}
    \item There are $k$ particles to the left of the site $m+1$ and the second class particle is to the right of site $m$. In this case the label of the second class particle is at least $k$.
    \item There are $k+1$ particles to the left of site $m+1$ and the second class particle is one of them. In this case the label of the second class particle is at most $k$.
\end{itemize}
Using Lemma \ref{lem: mu^c to mu^c-d} we then have that,
\begin{equation}\label{eq: Np recurrence}
\underline{\mu}^c(\{N^{\overset{\leftarrow}{p}}_{m+\frac{1}{2}}(\underline{\eta})=k\})=\underline{\mu}^{c-1}(\{N^{\overset{\leftarrow}{p}}_{m+\frac{1}{2}}(\underline{\xi})=k\})\mathbb{P}(x\geq k)+\underline{\mu}^{c-1}(\{N^{\overset{\leftarrow}{p}}_{m+\frac{1}{2}}(\underline{\xi})=k+1\})\mathbb{P}(x\leq k),
\end{equation}
where $x$ denotes the label of the second class particle. 
\par We want the measures above to all have the same parameter so we look to change $c-1$ to $c$; by Lemma \ref{lem: c shift in mu and N p,h} we have that,
$$ \underline{\mu}^{c-1}((\{N^{\overset{\leftarrow}{p}}_{m+\frac{1}{2}}(\underline{z})=k\})=\frac{q^{-k}}{1+q^{c-1-m}}\underline{\mu}^c(\{N^{\overset{\leftarrow}{p}}_{m+\frac{1}{2}}(\underline{z})=k\}).$$
Hence (\ref{eq: Np recurrence}) becomes, 
\begin{multline*}
\underline{\mu}^c(\{N^{\overset{\leftarrow}{p}}_{m+\frac{1}{2}}(\underline{z})=k\})= \frac{q^{-k}}{1+q^{c-1-m}}\underline{\mu}^c((\{N^{\overset{\leftarrow}{p}}_{m+\frac{1}{2}}(\underline{z})=k\})\cdot \mathbb{P}(x\geq k) \\ +\frac{q^{-(k+1)}}{1+q^{c-1-m}}\underline{\mu}^c((\{N^{\overset{\leftarrow}{p}}_{m+\frac{1}{2}}(\underline{z})=k+1\})\cdot \mathbb{P}(x\leq k).
\end{multline*}
By Theorem \ref{thrm: label process stationary dist} for $d=1$,
    $$\mathbb{P}(x\geq k)=\sum\limits_{i=k}^\infty (1-q)q^i=q^k \hspace{5mm} \text{and} \hspace{5mm} \mathbb{P}(x \leq k)=\sum\limits_{i=0}^k (1-q)q^i=1-q^{k+1}$$
thus we have that,
$$\underline{\mu}^c(\{N^{\overset{\leftarrow}{p}}_{m+\frac{1}{2}}(\underline{z})=k\})=\frac{\underline{\mu}^c((\{N^{\overset{\leftarrow}{p}}_{m+\frac{1}{2}}(\underline{z})=k\})}{1+q^{c-1-m}}+\frac{q^{-(k+1)}-1}{1+q^{c-1-m}}\underline{\mu}^c(\{N^{\overset{\leftarrow}{p}}_{m+\frac{1}{2}}(\underline{z})=k+1\}). $$
Rearranging gives the following recurrence, 
$$\underline{\mu}^c(\{N^{\overset{\leftarrow}{p}}_{m+\frac{1}{2}}(\underline{z})=k+1\})=\frac{q^{c-1-m}}{q^{-(k+1)}-1}\underline{\mu}^c(\{N^{\overset{\leftarrow}{p}}_{m+\frac{1}{2}}(\underline{z})=k\}).$$
We now solve this recurrence, 
\begin{align*}
    \underline{\mu}^c(\{N^{\overset{\leftarrow}{p}}_{m+\frac{1}{2}}(\underline{z})=k\})&=\frac{q^{c-1-m}}{q^{-k}-1}\underline{\mu}^c(\{N^{\overset{\leftarrow}{p}}_{m+\frac{1}{2}}(\underline{z})=k-1\})
    \\
    &=\frac{q^{2(c-1-m)}}{(q^{-k}-1)(q^{-(k-1)}-1)}\underline{\mu}^c(\{N^{\overset{\leftarrow}{p}}_{m+\frac{1}{2}}(\underline{z})=k-2\})
    \\
    \vdots \\
    &=\frac{q^{k(c-1-m)}}{\prod\limits_{j=0}^{k-1}(q^{-(k-j)}-1)}\underline{\mu}^c(\{N^{\overset{\leftarrow}{p}}_{m+\frac{1}{2}}(\underline{z})=0\})\\
    &= \frac{q^{k(c-1-m)}(-1)^k}{\prod\limits_{j=0}^{k-1}(1-q^{-(k-j)})}\underline{\mu}^c(\{N^{\overset{\leftarrow}{p}}_{m+\frac{1}{2}}(\underline{z})=0\}).
\end{align*}
We can calculate $\underline{\mu}^c(\{N^{\overset{\leftarrow}{p}}_{m+\frac{1}{2}}(\underline{z})=0\})$ explicitly to be $\prod\limits_{j\leq m}\frac{1}{1+q^{c-j}}$ and so we have that, 
    \begin{align*}
     \underline{\mu}^c(\{N^{\overset{\leftarrow}{p}}_{m+\frac{1}{2}}(\underline{z})=k\})&=   \frac{q^{k(c-1-m)}(-1)^k}{\prod\limits_{j=0}^{k-1}(1-q^{-(k-j)})}\prod\limits_{j\leq m}\frac{1}{1+q^{c-j}}\\ 
     &=\frac{q^{k(c-1-m)}(-1)^k}{(q^{-k};q)_k} \prod\limits_{i=0}^\infty\frac{1}{1+q^{i+c-m}}\\
     &=\frac{q^{k(c-1-m)}(-1)^k}{(q^{-k};q)_k(-q^{c-m};q)_\infty}.
    \end{align*}
    Finally we use the Pochhammer relation of Lemma \ref{lem: pochammer relation} giving,
    $$\underline{\mu}^c(\{N^{\overset{\leftarrow}{p}}_{m+\frac{1}{2}}(\underline{z})=k\})=\frac{q^{k(c-m-1)+\frac{k(k+1)}{2}}}{(q,q)_k(-q^{c-m},q)_\infty}=\frac{q^{k(c-m)+\frac{k(k-1)}{2}}}{(q;q)_k(-q^{c-m};q)_\infty}.$$
   
\end{proof}

\subsection{Second Class Particles Positions Distribution under $\underline{\mu}^c$} \label{subsect: asep positions}~
\par Now we look to describe the distribution of the positions of the second class particles within the system. We will consider two questions:
\begin{enumerate}
    \item For a given site what is the probability (under the blocking measure $\underline{\mu}^c$) that there is a second class particle there? 
    \item For a given $d$ distinct sites what is the probability (under the blocking measure $\underline{\mu}^c$) that the $d$ second class particles lie on these exact sites?
\end{enumerate}
The first question can be easily computed by looking at expectations under the blocking measures for $\underline{\eta}$ and $\underline{\xi}$. 
\vspace{3mm}\scpm*
 \begin{proof}
By the coupling we have that, $\eta_m=\xi_m-\sum\limits_{i=1}^d(\underline{\delta}_{X_i})_m$. We see that the event $\{\xi_m>\eta_m\}$ is exactly the event that $\{\sum\limits_{i=1}^d(\underline{\delta}_{X_i})_m>0\}$ or equivalently that $\{\sum\limits_{i=1}^d(\underline{\delta}_{X_i})_m=1\}$ since we are in the simple exclusion setting. Now if we take expectations over the coupling equation we have that, 
$$\mathbb{E}[\eta_m]=\mathbb{E}[\xi_m]-\mathbb{E}\left[\sum\limits_{i=1}^d(\underline{\delta}_{X_i})_m\right] \hspace{3mm} \Longleftrightarrow \hspace{3mm} \mathbb{P}(\eta_m=1)=\mathbb{P}(\xi_m=1)-\mathbb{P}\left(\sum\limits_{i=1}^d(\underline{\delta}_{X_i})_m=1\right).$$
Recall Lemma \ref{lem: mu^c to mu^c-d} and so by the assumption that $\underline{\xi}\sim\underline{\mu}^c$, it must be that $\underline{\eta}\sim\underline{\mu}^{c+d}$. Hence, 
$$\mathbb{P}(\xi_m>\eta_m)=\mathbb{P}(\xi_m=1)-\mathbb{P}(\eta_m=1)= \frac{q^{c-m}}{1+q^{c-m}}-\frac{q^{c+d-m}}{1+q^{c+d-m}}=\frac{(1-q^d)q^{c-m}}{(1+q^{c-m})(1+q^{c+d-m})}$$
\end{proof}
\begin{remark}
We see that this is equal to $\mathbb{P}(Y=-m)$ for some $Y$ which is distributed according to the discrete logistic distribution with parameters $(q,-c)$ (as defined by Chakraborty and Chakravarty in 2016 \cite{discrete_logistic}).
\end{remark}
\par To answer the second question we will make use of the label process for the second class particles. We then use the product structure of the blocking measure to split the state space up into the piece to the left of where we want the first second class particle and the pieces between the sites for the other second class particles. The following Theorem gives the stationary distribution for the positions of all the $d$ second class particles. 
\scpX*
\vspace{2mm}\begin{remark}
Of course when $d=1$ the two questions we considered are the same and so Theorems \ref{thrm: second class particle at a given site} and \ref{thrm: second class positions dist} are equal:
$$\mathbb{P}(X=m)=\frac{(1-q)q^{c-m}}{(1+q^{c-m})(1+q^{c+1-m})}.$$
When $d>1$ we see that this distribution is product in the $m_j$'s other than the exclusion phenomenon represented by the strict ordering of the $m_j$'s.
\end{remark}

\par \vspace{2mm} In order to prove Theorem \ref{thrm: second class positions dist} we first condition on the labels of the second class particles. By the independence of the label process and $\underline{\xi}$ we have that,
$$\mathbb{P}(\underline{X}=(m_1,..,m_d))=\sum\limits_{\underline{k}\in Z_+^d}\underline{\pi}(\underline{k})\cdot \underline{\mu}^c\left(\left\{\underline{\xi}: \xi_{m_i}=1, \sum\limits_{j=-\infty}^{m_i-1}\xi_j=k_i \text{ for } i \in \{1,..,d\}\right\}\right).$$
Let us focus on the probability under $\underline{\mu}^c$ in the above.
\begin{lem}\label{lem: mu^c(...)}
For $m_1<m_2<...<m_d \in \mathbb{Z}$ and $k_1<k_2<...<k_d \in \mathbb{Z}_{\geq0}$, 
\small $$\underline{\mu}^c\left(\left\{\underline{\xi}: \xi_{m_i}=1, \sum\limits_{j=-\infty}^{m_i-1}\xi_j=k_i \text{ for } i \in \{1,..,d\}\right\}\right) = \frac{q^{(k_1+1)(c-m_1)+\frac{k_1(k_1+1)}{2}}}{(q;q)_{k_1}(-q^{c-m_d};q)_\infty}\prod\limits_{j=2}^d\frac{q^{(\hat{k}_j+1)(c-m_j)+\frac{\hat{k}_j(\hat{k}_j+1)}{2}}(q;q)_{\hat{m}_j}}{(q;q)_{\hat{k}_j}(q;q)_{\hat{m}_j-\hat{k}_j}}$$
\normalsize where $\hat{k}_j:=k_j-k_{j-1}-1$ and $\hat{m}_j:=m_j-m_{j-1}-1$ for $j \in \{2,...,d\}$.
\end{lem}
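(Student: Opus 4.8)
The plan is to exploit the product structure of the blocking measure $\underline{\mu}^c$: since it is a product over sites, I would split the prescribed event into independent pieces supported on pairwise disjoint blocks of sites, and then recognise each piece as a quantity already computed in Theorem \ref{thm: half infinite asep particles} or Lemma \ref{lem: finite asep particles}.

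First I would decompose the event. Demanding $\xi_{m_i}=1$ and $\sum_{j=-\infty}^{m_i-1}\xi_j=k_i$ for all $i$ forces a particle at each site $m_i$, exactly $k_1$ particles strictly to the left of $m_1$, and, for each $j\in\{2,\dots,d\}$, exactly $\hat{k}_j=k_j-k_{j-1}-1$ particles inside the interval $[m_{j-1}+1,m_j-1]$. These constraints live on the pairwise disjoint site-sets $(-\infty,m_1-1]$, the singletons $\{m_i\}$, and the finite blocks $[m_{j-1}+1,m_j-1]$, so by the product form of $\underline{\mu}^c$ the probability factorises as the product of the corresponding marginal probabilities.

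Next I would evaluate each factor. The single-site factors are $\mu^c_{m_i}(1)=\frac{q^{c-m_i}}{1+q^{c-m_i}}$. The leftmost block is given by Theorem \ref{thm: half infinite asep particles} applied at $m=m_1-1$, and each interior block is given by Lemma \ref{lem: finite asep particles} with $\hat{m}_j$ sites and $\hat{k}_j$ particles, contributing the $q$-binomial $\begin{bmatrix}\hat{m}_j\\ \hat{k}_j\end{bmatrix}_q=\frac{(q;q)_{\hat{m}_j}}{(q;q)_{\hat{k}_j}(q;q)_{\hat{m}_j-\hat{k}_j}}$ together with a power of $q$. Absorbing the single-site factor at $m_1$ into the leftmost block, a short computation combines $q^{k_1(c-m_1+1)+\frac{k_1(k_1-1)}{2}}\cdot q^{c-m_1}$ into $q^{(k_1+1)(c-m_1)+\frac{k_1(k_1+1)}{2}}$; absorbing the factor at $m_j$ into its block likewise merges the exponent into $q^{(\hat{k}_j+1)(c-m_j)+\frac{\hat{k}_j(\hat{k}_j+1)}{2}}$. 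These match the powers of $q$ and the $q$-binomials in the claimed formula exactly.

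The main obstacle I expect is the bookkeeping of the Pochhammer denominators rather than any conceptual difficulty. After absorbing $\frac{1}{1+q^{c-m_1}}$, the leftmost block contributes $\frac{1}{(-q^{c-m_1};q)_\infty}$, an infinite product over all sites $\le m_1$; after absorbing $\frac{1}{1+q^{c-m_j}}$, the $j$-th interior block contributes $\prod_{i=m_{j-1}+1}^{m_j}(1+q^{c-i})^{-1}$, a finite product over the sites $m_{j-1}+1,\dots,m_j$, where I use $c-m_j+\hat{m}_j=c-m_{j-1}-1$ to identify the range of the Pochhammer $(-q^{c-m_j+1};q)_{\hat{m}_j}$. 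Since these consecutive ranges tile $(-\infty,m_d]$, the whole product of denominators telescopes into the single factor $\frac{1}{(-q^{c-m_d};q)_\infty}$, as in the statement. I would close by noting the constraints are consistent, since $k_1<\dots<k_d$ forces $\hat{k}_j\ge 0$; the degenerate case $m_j=m_{j-1}+1$ (empty block, $\hat{m}_j=0$) contributes trivially, and the case $\hat{k}_j>\hat{m}_j$ is automatically covered because the corresponding $q$-binomial vanishes.
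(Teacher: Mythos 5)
Your proposal is correct and follows essentially the same route as the paper: it uses the product structure of $\underline{\mu}^c$ to factorise the event over the disjoint blocks $(-\infty,m_1-1]$, $\{m_i\}$, and $[m_{j-1}+1,m_j-1]$, evaluates the factors via Theorem \ref{thm: half infinite asep particles} and Lemma \ref{lem: finite asep particles}, and telescopes the Pochhammer denominators into $(-q^{c-m_d};q)_\infty$ exactly as in the paper's proof.
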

\begin{proof}
Take some $m_1<m_2<...<m_d \in \mathbb{Z}$ and $k_1<k_2<...<k_d \in \mathbb{Z}_{\geq0}$ and define for $j \in \{2,...,d\}$ $\hat{k}_j:=k_j-k_{j-1}-1$, then 
\begin{align*}
\underline{\mu}^c\Bigg(\Bigg\{\underline{\xi}:& \xi_{m_i}=1, \sum\limits_{j=-\infty}^{m_i-1}\xi_j=k_i \text{ for } i \in \{1,..,d\}\Bigg\}\Bigg)  \\
&=\underline{\mu}^c\left(\left\{ \underline{z}: z_{m_i}=1,\sum\limits_{i=-\infty}^{m_1-1}z_i=k_1, \sum\limits_{i=m_{j-1}+1}^{m_j-1}z_i=\hat{k}_j \text{ for } j \in\{2,..,d\} \right\}\right).
\end{align*}
\normalsize So the event we are interested in (once we have fixed a value $\underline{k}$ for the label process) consists of:
\begin{itemize}
    \item Particles in $\underline{\xi}$ at sites $m_1,...,m_d$.
    \item $k_1$ particles to the left of site $m_1$.
    \item For each $j\in\{2,..d\}$, $\hat{k}_j$ particles on the sites in the range $[m_{j-1}+1,m_{j+1}-1]$.
\end{itemize}
Moreover, by the product structure of $\underline{\mu}^c$ these are independent and so the above is equal to,
\begin{equation}\label{eq: mu^(...) proof}
\prod\limits_{i=1}^d\mu^c_{m_i}(1)\cdot\underline{\mu}^c(\{N^{\overset{\leftarrow}{p}}_{m_1-1+\frac{1}{2}}(\underline{z})=k_1\})\cdot\prod\limits_{j=2}^d\underline{\mu}^c(\{N^{\overset{\leftarrow}{p}}_{m_j-1+\frac{1}{2}}(\underline{z})-N^{\overset{\leftarrow}{p}}_{m_{j-1}+\frac{1}{2}}(\underline{z})=\hat{k}_j\}).
\end{equation}
\par \noindent  By Theorem \ref{thm: half infinite asep particles} and Lemma \ref{lem: finite asep particles} we have,
$$   \text{Equation } (\ref{eq: mu^(...) proof})= \prod\limits_{i=1}^d \frac{q^{c-m_i}}{1+q^{c-m_i}}\cdot \frac{q^{k_1(c-m_1)+\frac{k_1(k_1+1)}{2}}}{(q;q)_{k_1}(-q^{c-m_1+1};q)_\infty}\prod\limits_{j=2}^d\frac{q^{\hat{k}_j(c-m_j)+\frac{\hat{k}_j(\hat{k}_j+1)}{2}}(q;q)_{\hat{m}_j}}{(-q^{c-m_j+1};q)_{\hat{m}_j}(q;q)_{\hat{k}_j}(q;q)_{\hat{m}_j-\hat{k}_j}}.$$
Let us consider, 
\begin{align*}
    (1+q^{c-m_1})&(-q^{c+m_1+1};q)_\infty\prod\limits_{j=2}^d(1+q^{c-m_j})(-q^{c-m_j+1};q)_{\hat{m}_j} \\
    &= (1+q^{c-m_1})\prod\limits_{i=0}^\infty(1+q^{c-m_1+1+i})\cdot \prod\limits_{j=2}^d\left\{(1+q^{c-m_j})\prod\limits_{i=0}^{\hat{m}_j-1}(1+q^{c-m_j+1+i})\right\}\\
    &= \prod\limits_{i=0}^\infty(1+q^{c-m_1+i})\cdot\prod\limits_{j=2}^d\left\{\prod\limits_{i=0}^{\hat{m}_j}(1+q^{c-m_j+i})\right\}=\prod\limits_{i=-m_1}^\infty(1+q^{c+i})\cdot\prod\limits_{j=2}^d\left\{\prod\limits_{i=-m_j}^{-m_{j-1}-1}(1+q^{c+i})\right\}\\&=\prod\limits_{i=-m_d}^\infty(1+q^{c+i})=\prod\limits_{i=0}^\infty(1+q^{c-m_d+i})
    =(-q^{c-m_d};q)_\infty.
\end{align*}
Hence, 
\small $$\underline{\mu}^c\left(\left\{\underline{\xi}: \xi_{m_i}=1, \sum\limits_{j=-\infty}^{m_i-1}\xi_j=k_i \text{ for } i \in \{1,..,d\}\right\}\right) = \frac{q^{(k_1+1)(c-m_1)+\frac{k_1(k_1+1)}{2}}}{(q;q)_{k_1}(-q^{c-m_d};q)_\infty}\prod\limits_{j=2}^d\frac{q^{(\hat{k}_j+1)(c-m_j)+\frac{\hat{k}_j(\hat{k}_j+1)}{2}}(q;q)_{\hat{m}_j}}{(q;q)_{\hat{k}_j}(q;q)_{\hat{m}_j-\hat{k}_j}}.$$
\end{proof}
\normalsize

\par \vspace{2mm} Using Lemma \ref{lem: mu^c(...)}, Euler's Identity and the $q$-Binomial Theorem we prove Theorem \ref{thrm: second class positions dist}, and find the stationary distribution of the positions of the second-class particles.
\begin{proof}[Proof of Theorem \ref{thrm: second class positions dist}]
 \begin{align*}
    &\mathbb{P}(\underline{X}=\underline{m})=\sum\limits_{\underline{k}\in Z_+^d}\underline{\pi}(\underline{k})\cdot \underline{\mu}^c\left(\left\{\underline{\xi}: \xi_{m_i}=1, \sum\limits_{j=-\infty}^{m_i-1}\xi_j=k_i \text{ for } i \in \{1,..,d\}\right\}\right)\\
    &=\sum\limits_{k_1=0}^\infty\sum\limits_{k_2=k_1+1}^{k_1+1+\hat{m}_2}\cdots \sum\limits_{k_d=k_{d-1}+1}^{k_{d-1}+1+\hat{m}_d}\prod\limits_{i=1}^d(1-q^i)q^{\sum\limits_{i=1}^d k_i-\frac{d(d-1)}{2}}\underline{\mu}^c\left(\left\{\underline{\xi}: \xi_{m_i}=1, \sum\limits_{j=-\infty}^{m_i-1}\xi_j=k_i \text{ for } i \in \{1,..,d\}\right\}\right)\\
    &=\sum\limits_{k_1=0}^\infty\sum\limits_{\hat{k}_2=0}^{\hat{m}_2}\cdots \sum\limits_{\hat{k}_d=0}^{\hat{m}_d}\prod\limits_{i=1}^d(1-q^i)q^{dk_1+\sum\limits_{i=2}^d (d+1-i)\hat{k}_i} \frac{q^{(k_1+1)(c-m_1)+\frac{k_1(k_1+1)}{2}}}{(q;q)_{k_1}(-q^{c-m_d};q)_\infty}\prod\limits_{j=2}^d\frac{q^{(\hat{k}_j+1)(c-m_j)+\frac{\hat{k}_j(\hat{k}_j+1)}{2}}(q;q)_{\hat{m}_j}}{(q;q)_{\hat{k}_j}(q;q)_{\hat{m}_j-\hat{k}_j}}\\
    &=\frac{\prod\limits_{i=1}^d(1-q^i)\cdot q^{dc-\sum\limits_{j=1}^dm_j}}{(-q^{c-m_d};q)_\infty}\sum\limits_{k_1=0}^\infty\sum\limits_{\hat{k}_2=0}^{\hat{m}_2}\cdots \sum\limits_{\hat{k}_d=0}^{\hat{m}_d}\frac{q^{k_1(c+d-m_1)+\frac{k_1(k_1+1)}{2}}}{(q;q)_{k_1}}\prod\limits_{j=2}^d\frac{q^{\hat{k}_j(c+d+1-j-m_j)+\frac{\hat{k}_j(\hat{k}_j+1)}{2}}(q;q)_{\hat{m}_j}}{(q;q)_{\hat{k}_j}(q;q)_{\hat{m}_j-\hat{k}_j}}
    \end{align*}
 Euler's identity gives us that, 
    $$\sum\limits_{k_1=0}^\infty \frac{q^{k_1(c+d-m_1)+\frac{k_1(k_1+1)}{2}}}{(q;q)_{k_1}}=(-q^{c+d+1-m_1};q)_\infty.$$
    For each $j \in \{2,...,d\}$ the $q$-Binomial Theorem gives, 
    $$\sum\limits_{\hat{k}_j=0}^{\hat{m}_j}\frac{q^{\hat{k}_j(c+d+1-j-m_j)+\frac{\hat{k}_j(\hat{k}_j+1)}{2}}(q;q)_{\hat{m}_j}}{(q;q)_{\hat{k}_j}(q;q)_{\hat{m}_j-\hat{k}_j}}=(-q^{c+d+2-j-m_j};q)_{\hat{m}_j}.$$
    Thus,
    
$$\mathbb{P}(\underline{X}=\underline{m})=\frac{\prod\limits_{i=1}^d(1-q^i)\cdot q^{dc-\sum\limits_{j=1}^dm_j}(-q^{c+d+1-m_1};q)_\infty\prod\limits_{j=2}^d(-q^{c+d+2-j-m_j};q)_{\hat{m}_j}}{(-q^{c-m_d};q)_\infty}.$$
 \vspace{5mm}
 By iteratively applying Lemma \ref{lem: m_j pochammers} we have, 
    \begin{align*}
    \mathbb{P}(\underline{X}=\underline{m})&=\frac{\prod\limits_{i=1}^d(1-q^i)\cdot q^{dc-\sum\limits_{j=1}^dm_j}(-q^{c+d+1-m_1};q)_\infty\prod\limits_{j=2}^d(-q^{c+d+2-j-m_j};q)_{\hat{m}_j}}{(-q^{c-m_d};q)_\infty}\\
    &=\frac{\prod\limits_{i=1}^d(1-q^i)\cdot q^{dc-\sum\limits_{j=1}^dm_j}(-q^{c+d+1-m_1};q)_\infty\prod\limits_{j=2}^{d-1}(-q^{c+d+2-j-m_j};q)_{\hat{m}_j}}{(1+q^{c-m_d})(1+q^{c+1-m_d})(-q^{c+1-m_{d-1}};q)_\infty}\\
    &=\frac{\prod\limits_{i=1}^d(1-q^i)\cdot q^{dc-\sum\limits_{j=1}^dm_j}(-q^{c+d+1-m_1};q)_\infty\prod\limits_{j=2}^{d-2}(-q^{c+d+2-j-m_j};q)_{\hat{m}_j}}{(1+q^{c-m_d})(1+q^{c+1-m_d})(1+q^{c+1-m_{d-1}})(1+q^{c+2-m_{d-1}})(-q^{c+2-m_{d-2}};q)_\infty}\\
    &\vdots\\
    &=\frac{\prod\limits_{i=1}^d(1-q^i)\cdot q^{dc-\sum\limits_{j=1}^dm_j}(-q^{c+d+1-m_1};q)_\infty}{\prod\limits_{j=2}^d(1+q^{c+d-j-m_j})(1+q^{c+d+1-j-m_j})(q^{c+d-1-m_1};q)_\infty}.
\end{align*}
Lastly we see that, 
$$\frac{(-q^{c+d+1-m_1};q)_\infty}{(q^{c+d-1-m_1};q)_\infty}=\frac{1}{(1+q^{c+d-1-m_1})(1+q^{c+d-m_1})}$$
and thus,
$$\mathbb{P}(\underline{X}=\underline{m})=\frac{\prod\limits_{i=1}^d(1-q^i)\cdot q^{dc-\sum\limits_{j=1}^dm_j}}{\prod\limits_{j=1}^d(1+q^{c+d-j-m_j})(1+q^{c+d+1-j-m_j})}.$$
\end{proof}
\par Thus, we have found the distribution under the blocking measure for the positions of any given $d$ second class particles in ASEP.

\subsection{Second Class Particles Positions Distribution under $\underline{\nu}^n$} \label{subsect: asep positions under nu}~
\par We will now consider the same questions under the ergodic measure. To recall the questions we consider are as follows.
\begin{enumerate}
    \item For a given site what is the probability (under the ergodic measure $\underline{\nu}^n$) that there is a second class particle there? 
    \item For a given $d$ distinct sites what is the probability (under the ergodic measure $\underline{\nu}^n$) that the $d$ second class particles lie on these exact sites?
\end{enumerate}
Just as in Section \ref{subsect: asep positions}, the first question can be easily computed by looking at expectations under the ergodic measures for $\underline{\eta}$ and $\underline{\xi}$. 

\nuscpm*

\vspace{3mm}\begin{proof}
By the coupling we have that, $\eta_m=\xi_m-\sum\limits_{i=1}^d(\underline{\delta}_{X_i})_m$. We see that the event $\{\xi_m>\eta_m\}$ is exactly the event that $\{\sum\limits_{i=1}^d(\underline{\delta}_{X_i})_m>0\}$ or equivalently that $\{\sum\limits_{i=1}^d(\underline{\delta}_{X_i})_m=1\}$ since we are in the simple exclusion setting. Recall Lemma \ref{lem: nu^n to nu^n+d} and so by the assumption that $\underline{\xi}\sim\underline{\nu}^n$, it must be that $\underline{\eta}\sim\underline{\nu}^{n+d}$. Now if we take expectations over the coupling equation we have that, 
$$\mathbb{E}[\eta_m]=\mathbb{E}[\xi_m]-\mathbb{E}\left[\sum\limits_{i=1}^d(\underline{\delta}_{X_i})_m\right] \hspace{3mm} \Longleftrightarrow \hspace{3mm} \mathbb{P}_{\underline{\nu}^{n+d}}(\eta_m=1)=\mathbb{P}_{\underline{\nu}^n}(\xi_m=1)-\mathbb{P}_{\underline{\nu}^n}\left(\sum\limits_{i=1}^d(\underline{\delta}_{X_i})_m=1\right).$$
Hence by Lemma \ref{lem: site under nu n} we have that, 
    \begin{align*}
        \mathbb{P}_{\underline{\nu}^n}(\xi_m>\eta_m)&=\mathbb{P}_{\underline{\nu}^n}(\xi_m=1)-\mathbb{P}_{\underline{\nu}^{n+d}}(\eta_m=1)\\
        &=(q;q)_\infty\left\{\sum\limits_{k=(m-n-1)^+}^\infty\frac{q^{(k+1)(n-m+k+1)}}{(q;q)_k(q;q)_{n-m+k+1}}-\sum\limits_{k=(m-n-d-1)^+}^\infty\frac{q^{(k+1)(n+d-m+k+1)}}{(q;q)_k(q;q)_{n+d-m+k+1}}\right\}.
    \end{align*}
\end{proof}
As before the second question we ask is what is the joint distribution of the positions of all $d$ second class particles. We will first study the case when $d=1$, of course this is equivalent to taking $d=1$ in Proposition \ref{prop: second class particle at a given site nu^n}. However by conditioning on the label of the second class particle we find an alternative form for this distribution. 

\nuscpmother*

\begin{proof}
    We begin by conditioning on the label of the second class particle. By the independence of the label process and $\underline{\xi}$ we have that, 
    \begin{align*}
        \mathbb{P}&_{\underline{\nu}^n}(X=m)=\sum\limits_{k= 0}^\infty\underline{\pi}(k)\cdot\underline{\nu}^n\left(\left\{\underline{\xi}:\xi_m=1, \sum\limits_{i=-\infty}^{m-1}\xi_i=k\right\}\right)\\
        &=\sum\limits_{k=(m-n-1)^+}^\infty \underline{\pi}(k)\cdot \frac{\underline{\mu}^c\left(\left\{\underline{\xi}:\xi_m=1, \sum\limits_{i=-\infty}^{m-1}\xi_i=k\right\}\cap\{N=n\}\right)}{\underline{\mu}^c(\{N=n\})}\\
        &=\sum\limits_{k=(m-n-1)^+}^\infty \underline{\pi}(k)\frac{\underline{\mu}^c(\{N^{\overset{\leftarrow}{p}}_{m-1+\frac{1}{2}}(\underline{z})=k\}\cap\{\xi_m=1\}\cap\{N^{\overset{\rightarrow}{h}}_{m+\frac{1}{2}}(\underline{z})=n-m+k+1\})}{\underline{\mu}^c(\{N=n\})}\\
        &=\sum\limits_{k=(m-n-1)^+}\underline{\pi}(k)\frac{\underline{\mu}^c(\{N^{\overset{\leftarrow}{p}}_{m-1+\frac{1}{2}}(\underline{z})=k\})\cdot\mu^c_m(1)\cdot\underline{\mu}^{2m+1-c}(\{N^{\overset{\leftarrow}{p}}_{m+\frac{1}{2}}(\underline{z})=n-m+k+1\})}{\underline{\mu}^c(\{N=n\})}\\
        &=\sum\limits_{k=(m-n-1)^+}^\infty \frac{(1-q)q^k\sum\limits_{\ell\in\mathbb{Z}}q^{\frac{\ell(\ell+1)}{2}-\ell c}}{q^{\frac{n(n+1)}{2}-nc}}\cdot \frac{q^{k(c+1-m)+\frac{k(k-1)}{2}}}{(q;q)_k(-q^{c+1-m};q)_\infty}\cdot \frac{q^{c-m}}{(1+q^{c-m})}\\
        &\hspace{30mm}\cdot\frac{q^{(n-m+k+1)(m+1-c)+\frac{(n-m+k+1)(n-m+k)}{2}}}{(q;q)_{n-m+k+1}(-q^{m+1-c};q)_\infty}.\\
    \end{align*}
    Then by Lemma \ref{lem: pochammer relation for nu^n calcs} and the Jacobi triple product identity, \eqref{eq: JTP}, we have that, 
    $$\mathbb{P}_{\underline{\nu}^n}(X=m)=(1-q)(q;q)_\infty \sum\limits_{k=(m-n-1)^+}^\infty \frac{q^{k(n-m+k+1)+k+(n-m+k+1)}}{(q;q)_k(q;q)_{n-m+k+1}}.$$
\end{proof}

Under the ergodic measure $\underline{\nu}^n$ (for any $n\in\mathbb{Z}$) there is a unique ground state, $\underline{\eta}^n\in\Omega^n$ given by, 
$$\eta^n_i=\begin{cases}
        1 &\text{ if }i>n\\
        0 &\text{ if }i\leq n
    \end{cases}.$$ 
If the ASEP with a single second class particle is distributed according to the ergodic measure (i.e.\ $\underline{\xi}\sim\underline{\nu}^n$) then in the ground state $\underline{\eta}^n$ the second class particle is at site $n+1$. It is therefore natural to consider the displacement of the second class particle from its (most probable) position in the ground state. We will call this displacement $D$ and we notice that, $D=X-(n+1)$. Using Proposition \ref{prop: position of single 2cp under nu} we give the distribution of the displacement of a single second class particle in ASEP. 

\begin{cor}\label{cor: 1 displacement}
   Consider the coupling $(\underline{\eta},\underline{\xi})$ when $d=1$ (i.e.\ a single second class particle in ASEP). Assume the coupled system is stationary with $\underline{\xi}\sim\underline{\nu}^n$. Then for any $\ell \in\mathbb{Z}$ the probability that the displacement of the second class particle is $\ell$ is, 
   $$\mathbb{P}_{\underline{\nu}^n}(D=\ell)=(1-q)(q;q)_\infty\sum\limits_{k=(\ell)^+}^\infty \frac{q^{k(k-\ell)+k+(k-\ell)}}{(q;q)_k(q;q)_{k-\ell}}.$$
\end{cor}

\begin{proof}
    By definition, $D=X-(n+1)$ and so, 
    $$\mathbb{P}_{\underline{\nu}^n}(D=\ell)=\mathbb{P}_{\underline{\nu}^n}(X=\ell+n+1)=(1-q)(q;q)_\infty\sum\limits_{k=(\ell)^+}^\infty \frac{q^{k(k-\ell)+k+(k-\ell)}}{(q;q)_k(q;q)_{k-\ell}}.$$
\end{proof}

\begin{remark}\label{rem: displacement}
    In 2012, Gnedin and Olshanski, \cite{gnedin_olshanski}, defined the two-sided infinite Mallows model for random permutations of the integers. We will denote the set of all permutations of $\mathbb{Z}$ (i.e. bijections $\sigma:\mathbb{Z}\rightarrow\mathbb{Z}$) by $\mathfrak{S}$. Here we will see how the above result links to these random permutations. First we give a short introduction to Mallows measures (for full details see \cite{gnedin_olshanski}).
    \par The two sided infinite Mallows measure is a probability distribution on permutations of the integers that concentrates on,
    $$\mathfrak{S}^{\text{bal}}\defeq\left\{\sigma\in\mathfrak{S}: |\{i\in\mathbb{Z}_{\leq 0}: \sigma(i)\in\mathbb{Z}_{>0}\}|<\infty \text{ and, } |\{j\in\mathbb{Z}_{>0}: \sigma(j)\in\mathbb{Z}_{\leq 0}\}|<\infty \right\}.$$
    That is, far to the left ($i\ll0$) in the permutation we only see negative numbers and far to right ($i\gg0$) we only see positive numbers. Note that this is similar to the blocking states in ASEP, where far to the left there are only holes and far to the right only particles. \par As described in [\cite{gnedin_olshanski}, Equation (3)], such a permutation, $\sigma$, of $\mathbb{Z}$ can be given by permutations, $\sigma^+$ and $\sigma^-$, of $\mathbb{Z}_{>0}$ and $\mathbb{Z}_{\leq 0}$ respectively, and an interlacing pattern, given by an integer partition $\lambda$, which encodes how to fit the two permutations together.  For a given $\sigma\in\mathfrak{S}^{\text{bal}}$ consider the associated permutation word,
    $$w=(\cdots w_{-1}w_0w_1w_2\cdots)=(\cdots \sigma(-1)\sigma(0)\sigma(1)\sigma(2)\cdots).$$
    \par \vspace{2mm}\noindent From this we derive a binary word, 
    $$\varepsilon=(\cdots \varepsilon_{-1}\varepsilon_0\varepsilon_1\varepsilon_2\cdots)\in\{-1,+1\}^{\mathbb{Z}},$$ 
    where,

    $$\varepsilon_i=\begin{cases} +1 &\text{if } \sigma(i)\in\mathbb{Z}_{>0}\\
    -1 &\text{if } \sigma(i)\in\mathbb{Z}_{\leq 0}.\end{cases} $$
    The binary word $\varepsilon$ can be encoded into an integer partition $\lambda$ by splitting $\mathbb{Z}$ into the disjoint union of the set of positions where $\varepsilon_i=-1$ and the set of positions where $\varepsilon=1$. That is, 
    $$\mathbb{Z}=\sigma^{-1}(\mathbb{Z}_{\leq 0})\cup\sigma^{-1}(\mathbb{Z}_{>0})=\{\cdots<j_{-2}<j_{-1}<j_0\}\cup\{i_1<i_2<i_3<\cdots\}.$$
    \par\noindent Then $\lambda$ is given by, 
    $$\lambda_\ell=\ell-i_\ell \quad \forall \ell\geq 0.$$
    \par\vspace{2mm}\noindent Or equivalently the conjugate partition, $\lambda'$, is given by,
    $$\lambda'_\ell=j_{-\ell+1}+\ell-1 \quad \forall \ell\geq 0.$$
    \par \vspace{2mm}\noindent With this we can define the permutations $\sigma^+$ and $\sigma^-$ of $\mathbb{Z}_{>0}$ and $\mathbb{Z}_{\leq 0}$ respectively. We define this via the associated permutation words, 
    $$w^+\defeq (\sigma^+(1)\sigma^+(2)\sigma^+(3)\cdots)=(w_{i_1}w_{i_2}w_{i_3}\cdots),$$
    and,
    $$w^-\defeq (\cdots\sigma^-(-2)\sigma^-(-1)\sigma^-(0))=(\cdots w_{j_{-2}}w_{j_{-1}}w_{j_0}).$$
    This gives the triple $(w^+,w^-,\varepsilon)$, or equivalently $(w^+,w^-,\lambda)$; $w$ can be recovered from the this by replacing each $+1$ entry of $\varepsilon$ from left-to-right by the entries of $w^+$ and similarly the $-1$ entries of $\varepsilon$ from right-to-left with the entries of $w^-$.
    \par Then, as given in [\cite{gnedin_olshanski}, Equation (4)], the two sided infinite Mallows measure of parameter $q\in(0,1)$, $\mathcal{Q}$, is defined as a product measure,
    $$\mathcal{Q}\defeq \mathcal{Q}^+\otimes \mathcal{Q}^-\otimes\mathcal{P},$$
    where $\mathcal{Q}^+$ is the Mallows measure on permutations of $\mathbb{Z}_{>0}$, similarly $\mathcal{Q}^-$ is the Mallows measure on permutations of $\mathbb{Z}_{\leq0}$ and $\mathcal{P}$ is the natural probability distribution on integer partitions given by, 
    $$\mathcal{P}(\lambda)=\frac{q^{|\lambda|}}{(q;q)_\infty} \quad \text{ for any integer partition } \lambda.$$
    For a detailed construction of the Mallows measure on $\mathbb{Z}_{>0}$ see Gnedin and Olshanski \cite{gnedin_olshanski_2010} (in particular $\mathcal{Q}^+$ is defined in Definition 4.4). First consider the Mallows measure on permutations of $\{1,\cdots,n\}$ for any $n\in\mathbb{Z}_{>0}$,
    $$\mathcal{Q}_n(\sigma)=\frac{q^{\text{inv}(\sigma)}(q;q)_n}{(1-q)^n}.$$
    Then as $n\rightarrow\infty$, $\mathcal{Q}_n$ weakly converge to $\mathcal{Q}^+$ [\cite{gnedin_olshanski_2010}, Proposition A.1.]. The Mallows measure, $\mathcal{Q}^{-}$, on $\mathbb{Z}_{\leq 0}$ can be similarly defined. 
    \par Consider a permutation, $\sigma\in\mathfrak{S}$. For any integer, $j$, its displacement is defined to be $D_j=\sigma(j)-j$. We see that Corollary \ref{cor: 1 displacement} gives exactly the distribution of displacement for any integer under the two-sided infinite Mallows measure with parameter $q$ as given in [\cite{gnedin_olshanski}, Theorem 5.1],
    $$\mathbb{P}_{\mathcal{Q}}(D=\ell)=(1-q)(q;q)_\infty\sum\limits_{\substack{r,s\geq 0:\\ r-s=\ell}}\frac{q^{rs+r+s}}{(q;q)_r(q;q)_s} \quad \forall \ell \in\mathbb{Z}.$$
    This is clear since the connection between ASEP under blocking measures and permutations of the integers distributed according to the Mallows measure has been well studied, for example by Bufetov and Nejjar \cite{bufetov_nejjar}. Indeed, if we let any $i\in\mathbb{Z}_{>n+1}$ be equivalent to a first class particle, any $j\in\mathbb{Z}_{<n+1}$ be equivalent to a hole and $n+1$ represent a single second class particle then, any Mallows distributed permutation of $\mathbb{Z}$ gives an ASEP state with a single second class particle distributed according to the ergodic measure $\underline{\nu}^n$.
\end{remark}

\par \vspace{3mm} Now we consider the joint distribution of any given $d>0$ second class particles in ASEP under the ergodic measure $\underline{\nu}^n$ for any $n\in\mathbb{Z}$.
\dundernu*

\begin{remark}
    The proof follows the same reasoning as used in the proofs of Theorem \ref{thrm: second class positions dist} and Proposition \ref{prop: position of single 2cp under nu}.
\end{remark}

\begin{proof}
    We first condition on the labels of the $d$ second class particles. Since the label process and the distribution of $\underline{\xi}$ are independent we have that, 
    \begin{align}
    \mathbb{P}_{\underline{\nu}^n}(\underline{X}=\underline{m})&=\sum\limits_{\underline{k}\in Z_+^d}\underline{\pi}(\underline{k})\cdot\underline{\nu}^n\Bigg(\bigg\{\underline{\xi}:\xi_{m_i}=1,\sum\limits_{j=-\infty}^{m_i-1}\xi_j=k_i \hspace{1mm} \forall i\in\{1,\cdots,d\}\bigg\}\Bigg)\notag\\
    &=\sum\limits_{\underline{k}\in Z_+^d}\underline{\pi}(\underline{k})\cdot\underline{\nu}^n\Bigg(\bigg\{\underline{\xi}:\xi_{m_i}=1 \hspace{1mm} \forall i\in\{1,\cdots,d\},\sum\limits_{i=-\infty}^{m_1-1}\xi_i=k_1,\sum\limits_{i=m_{j-1}+1}^{m_j-1}\xi_i=\hat{k}_j \hspace{1mm} \forall j\in\{2,\cdots,d\}\bigg\}\Bigg), \label{eq: second class under nu n}
\end{align}
where $\hat{k}_j\defeq k_j-k_{j-1}-1$. We note that for some values of $\underline{k}\in Z_+^d$ the probability of the quantity under $\underline{\nu}^n$ above will be 0 (we will make this more precise below).
\par Let's concentrate on the quantity under $\underline{\nu}^n$. For any $c\in\mathbb{R}$ we have that,
\begin{align}
    \underline{\nu}^n\Bigg(\bigg\{\underline{\xi}:&\xi_{m_i}=1 \hspace{1mm} \forall i\in\{1,\cdots,d\},\sum\limits_{i=-\infty}^{m_1-1}\xi_i=k_1,\sum\limits_{i=m_{j-1}+1}^{m_j-1}\xi_i=\hat{k}_j \hspace{1mm} \forall j\in\{2,\cdots,d\}\bigg\}\Bigg)\label{eq: nu stuff for d 2cp}\\
    &=\underline{\mu}^c\Bigg(\bigg\{\underline{\xi}:\xi_{m_i}=1 \hspace{1mm} \forall i\in\{1,\cdots,d\},\sum\limits_{i=-\infty}^{m_1-1}\xi_i=k_1,\sum\limits_{i=m_{j-1}+1}^{m_j-1}\xi_i=\hat{k}_j \hspace{1mm} \forall j\in\{2,\cdots,d\}\bigg\}\bigg|N(\underline{\xi})=n\Bigg)\notag\\
    \notag\\
    &=\frac{\underline{\mu}^c\Bigg(\bigg\{\underline{\xi}:\xi_{m_i}=1 \hspace{1mm} \forall i\in\{1,\cdots,d\},\sum\limits_{i=-\infty}^{m_1-1}\xi_i=k_1,\sum\limits_{i=m_{j-1}+1}^{m_j-1}\xi_i=\hat{k}_j \hspace{1mm} \forall j\in\{2,\cdots,d\}\bigg\}\bigcap\{N(\underline{\xi})=n\}\Bigg)}{\underline{\mu}^c(\{N(\underline{z})=n\})}.\notag
\end{align}
As we have already seen, for any $n,m\in\mathbb{Z}$,
$$\{N(\underline{z})=n\}\Longleftrightarrow\{N_{m+\frac{1}{2}}(\underline{z})=n-m\}.$$
We also note that, 
$$\bigg\{\underline{\xi}:\xi_{m_i}=1 \hspace{1mm} \forall i\in\{1,\cdots,d\},\sum\limits_{i=-\infty}^{m_1-1}\xi_i=k_1,\sum\limits_{i=m_{j-1}+1}^{m_j-1}\xi_i=\hat{k}_j \hspace{1mm} \forall j\in\{2,\cdots,d\}\bigg\}\Rightarrow\{N^{\overset{\leftarrow}{p}}_{m_d+\frac{1}{2}}(\underline{\xi})=k_d+1\}.$$
By definition, $$N_{m_d+\frac{1}{2}}(\underline{z})=N^{\overset{\rightarrow}{h}}_{m_d+\frac{1}{2}}(\underline{z})-N^{\overset{\leftarrow}{p}}_{m_d+\frac{1}{2}}(\underline{z}),$$
and so for any $n,m\in\mathbb{Z}$ and $k\in\mathbb{Z}_{\geq 0}$,
$$\{N^{\overset{\leftarrow}{p}}_{m+\frac{1}{2}}(\underline{z})=k\}\cap\{N_{m+\frac{1}{2}}(\underline{z})=n-m\}\Longleftrightarrow\{N^{\overset{\leftarrow}{p}}_{m+\frac{1}{2}}(\underline{z})=k\}\cap\{N^{\overset{\rightarrow}{h}}_{m+\frac{1}{2}}(\underline{z})=n-m+k\}.$$
Putting this together we see that, 
\begin{align*}
    &\bigg\{\underline{\xi}:\xi_{m_i}=1 \hspace{1mm} \forall i\in\{1,\cdots,d\},\sum\limits_{i=-\infty}^{m_1-1}\xi_i=k_1,\sum\limits_{i=m_{j-1}+1}^{m_j-1}\xi_i=\hat{k}_j \hspace{1mm} \forall j\in\{2,\cdots,d\}\bigg\}\bigcap\{N(\underline{z})=n\}\\
    &\Longleftrightarrow\quad \bigg\{\underline{\xi}:\xi_{m_i}=1 \hspace{1mm} \forall i\in\{1,\cdots,d\},\sum\limits_{i=-\infty}^{m_1-1}\xi_i=k_1,\sum\limits_{i=m_{j-1}+1}^{m_j-1}\xi_i=\hat{k}_j \hspace{1mm} \forall j\in\{2,\cdots,d\}\bigg\} \\
    &\quad\quad\quad\quad\quad\quad\quad\quad\bigcap\{N^{\overset{\rightarrow}{h}}_{m_d+\frac{1}{2}}(\underline{z})=n-m_d+k_d+1\}.
\end{align*}
Using the product structure of $\underline{\mu}^c$ and Corollary \ref{cor: c change between holes and particles in ASEP} we find that,
\begin{align*}
    \text{Equation} \eqref{eq: nu stuff for d 2cp}
    &=\frac{1}{\underline{\mu}^c(\{N(\underline{z})=n\})}\cdot \prod\limits_{i=1}^d\mu^c_{m_i}(1)\cdot\prod\limits_{j=2}^d\underline{\mu}^c(\{N^{\overset{\leftarrow}{p}}_{m_j-1+\frac{1}{2}}(\underline{z})-N^{\overset{\leftarrow}{p}}_{m_{j-1}+\frac{1}{2}}(\underline{z})=\hat{k}_j\}) \\
    &\hspace{20mm}\cdot\underline{\mu}^c(\{N^{\overset{\leftarrow}{p}}_{m_1-1+\frac{1}{2}}(\underline{z})=k_1\})\cdot\underline{\mu}^c(\{N^{\overset{\rightarrow}{h}}_{m_d+\frac{1}{2}}(\underline{z})=n-m_d+k_d+1\})\\
    &=\frac{1}{\underline{\mu}^c(\{N(\underline{z})=n\})}\cdot \prod\limits_{i=1}^d\mu^c_{m_i}(1)\cdot\prod\limits_{j=2}^d\underline{\mu}^c(\{N^{\overset{\leftarrow}{p}}_{m_j-1+\frac{1}{2}}(\underline{z})-N^{\overset{\leftarrow}{p}}_{m_{j-1}+\frac{1}{2}}(\underline{z})=\hat{k}_j\}) \\
    &\hspace{20mm}\cdot\underline{\mu}^c(\{N^{\overset{\leftarrow}{p}}_{m_1-1+\frac{1}{2}}(\underline{z})=k_1\})\cdot\underline{\mu}^{2m_d+1-c}(\{N^{\overset{\leftarrow}{p}}_{m_d+\frac{1}{2}}(\underline{z})=n-m_d+k_d+1\}).
\end{align*}
\par By Lemma \ref{lem: mu^c(...)} that is, for permissible configurations, 
\begin{align*}
     \text{Equation }\eqref{eq: nu stuff for d 2cp}&= \frac{1}{\underline{\mu}^c(\{N(\underline{z})=n\})}\cdot\underline{\mu}^{2m_d+1-c}(\{N^{\overset{\leftarrow}{p}}_{m_d+\frac{1}{2}}(\underline{z})=n-m_d+k_d+1\})\\
     &\hspace{20mm}\cdot \frac{q^{(k_1+1)(c-m_1)+\frac{k_1(k_1+1)}{2}}}{(q;q)_{k_1}(-q^{c-m_d};q)_\infty}\prod\limits_{j=2}^d\frac{q^{(\hat{k}_j+1)(c-m_j)+\frac{\hat{k}_j(\hat{k}_j+1)}{2}}(q;q)_{\hat{m}_j}}{(q;q)_{\hat{k}_j}(q;q)_{\hat{m}_j-\hat{k}_j}}\\
     &=\frac{\sum\limits_{\ell=-\infty}^\infty q^{\frac{\ell(\ell+1)}{2}-\ell c}}{q^{\frac{n(n+1)}{2}-nc}}\cdot \frac{q^{(n-m_d+k_d+1)(m_d+1-c)+\frac{(n-m_d+k_d)(n-m_d+k_d+1)}{2}}}{(q;q)_{n-m_d+k_d+1}(-q^{m_d+1-c};q)_\infty}\\
    &\hspace{20mm}\cdot \frac{q^{(k_1+1)(c-m_1)+\frac{k_1(k_1+1)}{2}}}{(q;q)_{k_1}(-q^{c-m_d};q)_\infty}\prod\limits_{j=2}^d\frac{q^{(\hat{k}_j+1)(c-m_j)+\frac{\hat{k}_j(\hat{k}_j+1)}{2}}(q;q)_{\hat{m}_j}}{(q;q)_{\hat{k}_j}(q;q)_{\hat{m}_j-\hat{k}_j}}\\
    &= \sum\limits_{\ell=-\infty}^\infty q^{\frac{\ell(\ell+1)}{2}-\ell c}\frac{q^{m_dc-\frac{m_d(m_d+1)}{2}+\frac{k_d(k_d+1)}{2}+(k_d+1)(n+1)}}{(q;q)_{n-m_d+k_d+1}(-q^{m_d+1-c};q)_\infty} \\
    &\hspace{20mm}\cdot \frac{q^{-m_1(k_1+1)+\frac{k_1(k_1+1)}{2}}}{(q;q)_{k_1}(-q^{c-m_d};q)_\infty}\prod\limits_{j=2}^d\frac{q^{-m_j(\hat{k}_j+1)+\frac{\hat{k}_j(\hat{k}_j+1)}{2}}(q;q)_{\hat{m}_j}}{(q;q)_{\hat{k}_j}(q;q)_{\hat{m}_j-\hat{k}_j}},
\end{align*}
where $\hat{m}_j\defeq m_j-m_{j-1}-1$ for each $j\in\{2,3,\cdots ,d\}$.
\par By Lemma \ref{lem: pochammer relation for nu^n calcs} and the Jacobi triple product identity, equation \eqref{eq: JTP}, we have that,
\begin{align*}
     \underline{\nu}^n\Bigg(\bigg\{\underline{\xi}&:\xi_{m_i}=1 \hspace{1mm} \forall i\in\{1,\cdots,d\},\sum\limits_{i=-\infty}^{m_1-1}\xi_i=k_1,\sum\limits_{i=m_{j-1}+1}^{m_j-1}\xi_i=\hat{k}_j \hspace{1mm} \forall j\in\{2,\cdots,d\}\bigg\}\Bigg)\\
     &=\frac{q^{\frac{k_d(k_d+1)}{2}+(k_d+1)(n+1)-m_1(k_1+1)+\frac{k_1(k_1+1)}{2}}(q;q)_\infty}{(q;q)_{n-m_d+k_d+1}(q;q)_{k_1}}\prod\limits_{j=2}^d\frac{q^{-m_j(\hat{k}_j+1)+\frac{\hat{k}_j(\hat{k}_j+1)}{2}}(q;q)_{\hat{m}_j}}{(q;q)_{\hat{k}_j}(q;q)_{\hat{m}_j-\hat{k}_j}}.
\end{align*}
\par Now let's return to the sum over possible label configurations of the second class particles as in \eqref{eq: second class under nu n}. Apriori we have that $\underline{k}\in Z_+^d$ that is, 
\begin{equation}\label{eq: ks are strict increasing}
0\leq k_1<k_2<\cdots <k_d \quad \Longrightarrow \quad k_1\geq 0,\hspace{1mm} k_2\geq1,\hspace{1mm}\cdots,\hspace{1mm} k_d\geq d-1.
\end{equation}

In order for the calculations above for the quantity under $\underline{\nu}^n$ to hold we have to consider extra conditions on $\underline{k}$. In particular, any state, $\underline{\xi}$, must be such that $N(\underline{\xi})=n$, and so for each $j\in\{1,2,\cdots,d\}$ $N_{m_j+\frac{1}{2}}(\underline{\xi})=n-m_j$. The states we are considering are such that $N^{\overset{\leftarrow}{p}}_{m_j+\frac{1}{2}}(\underline{\xi})=k_j+1$ for each $j\in\{1,2,\cdots,d\}$. Thus by the definition of $N_{m_j+\frac{1}{2}}$ these states are also such that,
$$N^{\overset{\rightarrow}{h}}_{m_j+\frac{1}{2}}(\underline{\xi})=n-m_j+k_j+1 \quad\quad \quad \forall j\in\{1,2,\cdots,d\}.$$
Since the number of holes to the right of a site must be non-negative this gives that $\underline{k}$ must satisfy,
\begin{equation}\label{eq: k bound from holes begin positive}
    k_j\geq m_j-n-1 \quad \quad \quad \forall j\in\{1,2,\cdots,d\}.
\end{equation}
Putting \eqref{eq: ks are strict increasing} and \eqref{eq: k bound from holes begin positive} together we find that any possible $\underline{k}$ is such that,
$$0\leq k_1<k_2<\cdots<k_d, \text{ and } k_j\geq m_j-n-1 \quad \quad\quad \forall j\in\{1,2,\cdots,d\},$$
or equivalently, 
$$k_j\geq \max\{m_j-n-1,j-1\}=(m_j-n-j)^++(j-1)\quad\quad\quad \forall j\in\{1,2,\cdots,d\}.$$
\par Thus we have that, 
   \small \begin{align*}
    &\mathbb{P}_{\underline{\nu}^n}(\underline{X}=\underline{m})\\
    &=\sum\limits_{\underline{k}\in Z_+^d}\underline{\pi}(\underline{k})\cdot\underline{\nu}^n\Bigg(\bigg\{\underline{\xi}:\xi_{m_i}=1,\sum\limits_{j=-\infty}^{m_i-1}\xi_j=k_i \hspace{1mm} \forall i\in\{1,\cdots,d\}\bigg\}\Bigg)\\
    &=\sum\limits_{\substack{\underline{k}\in Z_+^d:\\\forall j\in\{1,2,\cdots,d\},\\ k_j\geq m_j-n-1}} q^{\sum\limits_{i=1}^d k_i-\frac{d(d-1)}{2}}\prod\limits_{i=1}^d\left(1-q^i\right)\frac{q^{\frac{k_d(k_d+1)}{2}+(k_d+1)(n+1)-m_1(k_1+1)+\frac{k_1(k_1+1)}{2}}(q;q)_\infty}{(q;q)_{n-m_d+k_d+1}(q;q)_{k_1}}\prod\limits_{j=2}^d\frac{q^{-m_j(\hat{k}_j+1)+\frac{\hat{k}_j(\hat{k}_j+1)}{2}}(q;q)_{\hat{m}_j}}{(q;q)_{\hat{k}_j}(q;q)_{\hat{m}_j-\hat{k}_j}}\\
    &=q^{-\sum\limits_{i=1}^dm_i}(q;q)_\infty\prod\limits_{i=1}^d(1-q^i)\sum\limits_{\substack{\underline{k}\in Z_+^d:\\\forall j\in\{1,2,\cdots,d\},\\ k_j\geq m_j-n-1}} q^{dk_1+\sum\limits_{j=2}^d(d+1-j)\hat{k}_j}\frac{q^{\frac{k_d(k_d+1)}{2}+(k_d+1)(n+1)-m_1k_1+\frac{k_1(k_1+1)}{2}}}{(q;q)_{n-m_d+k_d+1}(q;q)_{k_1}}\prod\limits_{j=2}^d\frac{q^{-m_j\hat{k}_j+\frac{\hat{k}_j(\hat{k}_j+1)}{2}}(q;q)_{\hat{m}_j}}{(q;q)_{\hat{k}_j}(q;q)_{\hat{m}_j-\hat{k}_j}}\\
    &=q^{-\sum\limits_{i=1}^dm_i}(q;q)_\infty\prod\limits_{i=1}^d(1-q^i)\sum\limits_{\substack{\underline{k}\in Z_+^d:\\\forall j\in\{1,2,\cdots,d\},\\ k_j\geq m_j-n-1}} \frac{q^{\frac{k_1(k_1+1)}{2}+k_1(d-m_1)+\frac{k_d(k_d+1)}{2}+(k_d+1)(n+1)}}{(q;q)_{n-m_d+k_d+1}(q;q)_{k_1}}\prod\limits_{j=2}^dq^{\hat{k}_j(d+1-j-m_j)+\frac{\hat{k}_j(\hat{k}_j+1)}{2}}\cdot \begin{bmatrix}
        \hat{m}_j\\
        \hat{k}_j
    \end{bmatrix}_q.
\end{align*}
\normalsize
\end{proof}
\begin{remark}
    It seems we cannot simplify the sum in Proposition \ref{prop: d 2cps under nu} further. For example let's consider the case where $m_d-n-1<0$, and therefore $m_j-n-1<0$ for any $j\in\{1,2,\cdots,d\}$. In this case the condition that $k_j\geq m_j-n-1$ for all $j$ is automatically satisfied and the summation is simply over any $\underline{k}\in Z_+^d$. So as we did in the proof of Theorem \ref{thrm: second class positions dist} we can rewrite the summation over $\underline{k}$ as the summation, 
    $$\sum\limits_{{k_1}=0}^\infty\sum\limits_{\hat{k}_2=0}^{\hat{m}_2}\cdots \sum\limits_{\hat{k}_d=0}^{\hat{m}_d}.$$
    In the proof of Theorem \ref{thrm: second class positions dist} we could proceed to simplify the sum by using identities such as Euler and the $q$-Binomial theorem. 
    \par However for the distribution given in Proposition \ref{prop: d 2cps under nu} we cannot use such identities. This is due to the factor of, 
    $$q^{\frac{k_d(k_d+1)}{2}}$$
    in the summand. If we write $k_d$ as, 
    $$k_d=k_1+\sum\limits_{j=2}^d\hat{k}_j+d-1,$$
    we see that $\frac{k_d(k_d+1)}{2}$  gives cross terms such as $\hat{k}_i\hat{k}_j$ and $k_1\hat{k}_j$. So these identities cannot help us here.
\end{remark}
\par \vspace{3mm}In Corollary \ref{cor: 1 displacement} we gave the distribution of the displacement of the second class particle from its ground state position in the case of ASEP with a single second class particle. Now we can give the joint distribution of displacements of $d$ second class particles from their positions in the ground state. If the ASEP with $d$ second class particles is distributed according to the ergodic measure (i.e.\ $\underline{\xi}\sim\underline{\nu}^n$) then in the ground state, $\underline{\eta}^n\in\Omega^n$, the second class particles are at sites $n+1,n+2,\cdots, n+d$. We will denote the displacement of the $j$-th second particle from it's position in the ground state by $\underline{D}=(D_1,\cdots,D_d)$, and note that $D_j=X_j-(n+j)$ for each $j\in\{1,\cdots,d\}$. We note that the second class particles stay ordered which implies that the displacements are non-decreasing, $D_1\leq D_2\leq\cdots\leq D_d$.
\begin{cor}\label{cor: d displacements}
    Assume that the coupled system is stationary and $\underline{\xi}\sim\underline{\nu}^n$. For any $\underline{\ell}\in\mathbb{Z}^d$, such that $\ell_1\leq \ell_2\leq \cdots \leq \ell_d$, the probability that the displacements of the $d$ second class particles, $\underline{D}$, is $\underline{\ell}$ is, 
    \begin{align*}
        \mathbb{P}_{\underline{\nu}^n}(\underline{D}=\underline{\ell})=q^{-\sum\limits_{i=1}^d\ell_i-\frac{d(d+1)}{2}}(q;q)_\infty\prod\limits_{i=1}^d(1-q^i)\sum\limits_{\substack{\underline{k}\in Z_+^d:\\\forall j\in\{1,2,\cdots,d\},\\ k_j\geq \ell_j+j-1}} &\frac{q^{\frac{k_1(k_1+1)}{2}+k_1(d-1-\ell_1)+\frac{k_d(k_d+1)}{2}+k_d+1}}{(q;q)_{k_d-\ell_d-d+1}(q;q)_{k_1}}\\
        &\quad \cdot\prod\limits_{j=2}^d q^{\hat{k}_j(d+1-2j-\ell_j)+\frac{\hat{k}_j(\hat{k}_j+1)}{2}}\cdot\begin{bmatrix}
            \ell_j-\ell_{j-1}\\
            \hat{k}_j       
            \end{bmatrix}_q,
    \end{align*}
    where $\hat{k}_j\defeq k_j-k_{j-1}-1$ for each $j\in\{2,\cdots,d\}$.
\end{cor}

\begin{proof}
In the ground state $\underline{\nu}^n$ the position of the $j$-th second class particle is $n+j$, for each $j\in\{1,\cdots,d\}$. Thus we have that,
    $$\mathbb{P}_{\underline{\nu}^n}(\underline{D}=\underline{\ell})=\mathbb{P}_{\underline{\nu}^n}(\underline{X}=\underline{m}),$$
    where $m_j\defeq\ell_j+n+j$ for each $j\in\{1,\cdots,d\}$. The result then follows form Proposition \ref{prop: d 2cps under nu}.
\end{proof}
\begin{remark}
    From our previous discussion (Remark \ref{rem: displacement}) we see that Corollary \ref{cor: d displacements} should be linked to [\cite{gnedin_olshanski}, Theorem 6.1] the probability of the joint displacement of $d$ integers under the two-sided infinite Mallows measure on permutations of $\mathbb{Z}$ (in the case where $D_1\leq\cdots\leq D_d$). That is for any $d>0$ and integers $\ell_1\leq \ell_2\leq \cdots \leq\ell_d$,
    $$\mathbb{P}_{\mathcal{Q}}(\underline{D}=\underline{\ell})=(1-q)^dq^{-\frac{d(d+1)}{2}}(q;q)_\infty\prod\limits_{j=2}^d(q;q)_{\ell_j-\ell_{j-1}}\sum\limits_{\substack{0\leq a_1,\cdots,a_d,b_1\cdots,b_d<\infty:\\
    \forall j\in\{1,\cdots,d\},\\
    \sum\limits_{k=1}^jb_k-\sum\limits_{k=j}^da_j=\ell_j}}\frac{q^{\sum\limits_{1\leq i\leq j\leq d}(b_i+1)(a_j+1)}}{(q;q)_{b_1}\cdots(q;q)_{b_d}(q;q)_{a_1}\cdots(q;q)_{a_d}}.$$
    
    \par As before we can equate a Mallows distributed permutation of $\mathbb{Z}$ with an ASEP state with $d$ second class particles distributed as $\underline{\nu}^n$. This is done by setting the integers $i\leq n$ to be holes, the integers $i\geq n+d+1$ to be first class particles and the integers $i\in\{n+1,\cdots, n+d\}$ to be second class particles. 
    \par As we have defined the ASEP with $d$ second class particles, the second class particles remain ordered (i.e.\ $X_1<X_2<\cdots<X_d$) and so the displacements of them from their position in the ground state are also ordered ($D_1\leq D_2\leq \cdots \leq D_d$). However, with Mallows permutations of $\mathbb{Z}$ it is not true that the displacements of consecutive integers have to be ordered in this way. Thus, we see that Corollary \ref{cor: d displacements} and [\cite{gnedin_olshanski}, Theorem 6.1] will not match exactly. In fact under the Mallows measure we need to consider all possible permutations (e.g.\ if $\ell_i=\ell_{i+1}$ then the permutation that only swaps $i$ and $i+1$ need not be considered) of the displacements $\ell_1,\cdots,\ell_d$. That is, for any $\ell_1\leq\cdots\leq\ell_d$,
    $$\mathbb{P}_{\underline{\nu}^n}(\underline{D}=\underline{\ell})=\mathbb{P}_{\mathcal{Q}}(\underline{D}=\underline{\ell})+\sum\limits_{\substack{\sigma:\{1,\cdots,d\}\stackrel{1:1}{\longrightarrow}\{1,\cdots,d\}:\\
    \sigma(\underline{\ell})\neq \underline{\ell}}}\mathbb{P}_{\mathcal{Q}}(\underline{D}=\sigma(\underline{\ell})),$$
    where the formula of [\cite{gnedin_olshanski}, Theorem 6.1] gives $\mathbb{P}_{\mathcal{Q}}(\underline{D}=\underline{\ell})$. However as remarked in \cite{gnedin_olshanski} the general case can be handled by introducing an additional factor of $q^{\text{inv}(d_1+1,\cdots,d_d+d)}$, thus all terms in the above sum can be found. 
\end{remark}
\section{Combinatorial identities and their meaning} \label{sect: combinatorics}
In Section \ref{subsect: identities} we gave probabilistic proofs to three classical combinatorial identities. Here we discuss their combinatorial meaning.
\par First, by considering the particle-hole symmetry of ASEP (Corollary \ref{cor: c change between holes and particles in ASEP}) we proved the Durfee Rectangles Identity. 
\durfee*
\par Let us first consider the case when $n=0$, then the identity is, 
$$\frac{1}{(q;q)_\infty}=\sum\limits_{k=0}^\infty\frac{q^{k^2}}{(q;q)_k^2}.$$
This is a well known identity, namely the identity for Durfee squares of integer partitions (for example see Chapter 8 in Andrews and Eriksson \cite{integer_partitions}). For a given integer partition the Durfee square is the largest square (anchored in the upper left-hand corner) in its Ferrers diagram. Equivalently, a given partition with $\ell$ parts, $(\lambda_1\geq\lambda_2\geq...\geq\lambda_\ell)$, has a Durfee square of side length $k\leq \ell$ if $\lambda_k\geq k$ and $\lambda_{k+1}\leq k$. With this definition, any integer partition decomposes into 3 pieces: its unique Durfee square of side length $k$, a partition into up $k$ parts to its right, and a partition with parts at most size $k$ underneath (for example see Figure \ref{fig:durfee square} below). We denote this decomposition in the following way, 
$$\lambda=(\lambda_1,\lambda_2,...,\lambda_\ell)\equiv\{k^2,\lambda^{(r)},\lambda^{(d)}\}$$
where $\lambda^{(r)}=(\lambda_1-k,...,\lambda_k-k)$ is the partition to the right of the Durfee square and $\lambda^{(d)}=(\lambda_{k+1},...,\lambda_{\ell})$ the partition underneath the Durfee square.
\par From this decomposition we see where the RHS of the identity comes from. Consider one term in the sum, say $\frac{q^{k^2}}{(q;q)_k^2}$, for some $k$. This is the generating function for integer partitions whose Durfee square is of side length $k$:
\begin{itemize}
\item $q^{k^2}$ corresponds to the Durfee square.
\item $\frac{1}{(q;q)_k}$ is the generating function for partitions into up to $k$ parts (see Section 6.2 in Andrews and Eriksson \cite{integer_partitions}) and so corresponds to the partition to the right of the Durfee square.
\item By conjugation $\frac{1}{(q;q)_k}$ is also the generating function for partitions into parts of size at most $k$ (see Section 6.2 in Andrews and Eriksson \cite{integer_partitions}) so this corresponds to partition underneath the Durfee square. 
\end{itemize}
The sum over all $k$ considers Durfee squares of any size and so we recover all integer partitions, for which the generating function is $\frac{1}{(q;q)_\infty}$, hence the identity holds.
\begin{figure}[H]
\centering
\begin{tikzpicture}[scale=0.7]
\draw[red] (-0.25,3.25)--(1.25,3.25);
\draw[red] (-0.25,1.75)--(1.25,1.75);
\draw[red] (-0.25,1.75)--(-0.25,3.25);
\draw[red] (1.25,1.75)--(1.25,3.25);
\filldraw [black] (0,3) circle (2pt);
\filldraw [black] (0.5,3) circle (2pt);
\filldraw [black] (1,3) circle (2pt);
\filldraw [black] (1.5,3) circle (2pt);
\filldraw [black] (2,3) circle (2pt);
\filldraw [black] (2.5,3) circle (2pt);
\filldraw [black] (3,3) circle (2pt);
\filldraw [black] (3.5,3) circle (2pt);
\filldraw [black] (0,2.5) circle (2pt);
\filldraw [black] (0.5,2.5) circle (2pt);
\filldraw [black] (1,2.5) circle (2pt);
\filldraw [black] (1.5,2.5) circle (2pt);
\filldraw [black] (2,2.5) circle (2pt);
\filldraw [black] (2.5,2.5) circle (2pt);
\filldraw [black] (3,2.5) circle (2pt);
\filldraw [black] (3.5,2.5) circle (2pt);
\filldraw [black] (0,2) circle (2pt);
\filldraw [black] (0.5,2) circle (2pt);
\filldraw [black] (1,2) circle (2pt);
\filldraw [black] (1.5,2) circle (2pt);
\filldraw [black] (2,2) circle (2pt);
\filldraw [black] (2.5,2) circle (2pt);
\filldraw [black] (3,2) circle (2pt);
\filldraw [black] (0,1.5) circle (2pt);
\filldraw [black] (0.5,1.5) circle (2pt);
\filldraw [black] (1,1.5) circle (2pt);
\filldraw [black] (0,1) circle (2pt);
\filldraw [black] (0.5,1) circle (2pt);
\filldraw [black] (0,0.5) circle (2pt);
\filldraw [black] (0,0) circle (2pt);
\node(a) at (1.5,-1.5) [label=$\lambda$\equal (8\comma 8\comma 7\comma 3\comma 2\comma 1\comma 1)]{};
\node(a) at (5,1) [label=$\Longleftrightarrow$]{};
\node(a) at (9,0.8) [label=$\lambda\equiv \{3^2\comma (5\comma5\comma4)\comma(3\comma2\comma1\comma1)\}$]{};
\end{tikzpicture}
    \caption{An example of the Durfee square for an integer partition, $\lambda$, of $30$.}
    \label{fig:durfee square}
\end{figure}
\par Now let us consider what the identity means combinatorially for any fixed $n\in\mathbb{Z}$. The identity now states that, for $n$ fixed, every integer partition has a unique Durfee rectangle, a generalisation of the notion of Durfee square (see for example equation (4) in Gessel \cite{Ges_durfee_rectangle}). For some fixed $n$ we say that a given partition with $\ell$ parts, $(\lambda_1\geq\lambda_2\geq...\geq\lambda_\ell)$ has a Durfee rectangle of side lengths $n+k$ and $k$ if $\lambda_{k}\geq n+k$ and $\lambda_{k+1}\leq n+k$. From this definition we see that for fixed $n$ each integer partition has a unique Durfee rectangle (just as it had a unique Durfee square i.e.\ the case when $n=0$). Now with this definition, for each fixed $n$, any integer partition decomposes into 3 pieces: its unique Durfee rectangle of side lengths $k$ and $n+k$, a partition into up to $k$ parts to its right, and a partition with parts at most size $n+k$ underneath. We denote this decomposition in the following way,
$$\lambda=(\lambda_1,\lambda_2,...,\lambda_\ell)\equiv\{k*(n+k),\lambda^{(n,r)},\lambda^{(n,d)}\}$$
where $\lambda^{(n,r)}=(\lambda_1-(n+k),...,\lambda_k-(n+k)$ is the partition to the right of the Durfee rectangle and $\lambda^{(n,d)}=(\lambda_{k+1},...,\lambda_\ell)$ the partition underneath the Durfee rectangle. See Figure \ref{fig: durfee rectangle} below for an example of the Durfee rectangle inside a given integer partition for different values of $n$.
\par Now we can see where the RHS of the identity comes from. Consider one term in the sum, say $\frac{q^{k(n+k)}}{(q;q)_k(q;q)_{n+k}}$, for some $k$. This is the generating function for integer partitions with Durfee rectangle of side lengths $k$ and $n+k$:
\begin{itemize}
    \item $q^{k(n+k)}$ corresponds to the Durfee rectangle.
    \item $\frac{1}{(q;q)_k}$ is the generating function for partitions into up to $k$ parts and so corresponds to the partition to the right of the Durfee rectangle.
    \item By conjugation $\frac{1}{(q;q)_{n+k}}$ is the generating function for partitions with parts of size at most $n+k$ and so corresponds to the partition underneath the Durfee rectangle.
\end{itemize}
\par Summing these generating functions for all $k$ from $\max\{-n,0\}$ to $\infty$ (with $n\in\mathbb{Z}$ fixed) we get all integer partitions, and thus the identity holds. 
\begin{figure}[H]
\centering
\begin{tikzpicture}[scale=0.7]
\draw[red] (-0.25,3.25)--(2.25,3.25);
\draw[red] (-0.25,1.75)--(2.25,1.75);
\draw[red] (-0.25,1.75)--(-0.25,3.25);
\draw[red] (2.25,1.75)--(2.25,3.25);
\filldraw [black] (0,3) circle (2pt);
\filldraw [black] (0.5,3) circle (2pt);
\filldraw [black] (1,3) circle (2pt);
\filldraw [black] (1.5,3) circle (2pt);
\filldraw [black] (2,3) circle (2pt);
\filldraw [black] (2.5,3) circle (2pt);
\filldraw [black] (3,3) circle (2pt);
\filldraw [black] (3.5,3) circle (2pt);
\filldraw [black] (0,2.5) circle (2pt);
\filldraw [black] (0.5,2.5) circle (2pt);
\filldraw [black] (1,2.5) circle (2pt);
\filldraw [black] (1.5,2.5) circle (2pt);
\filldraw [black] (2,2.5) circle (2pt);
\filldraw [black] (2.5,2.5) circle (2pt);
\filldraw [black] (3,2.5) circle (2pt);
\filldraw [black] (3.5,2.5) circle (2pt);
\filldraw [black] (0,2) circle (2pt);
\filldraw [black] (0.5,2) circle (2pt);
\filldraw [black] (1,2) circle (2pt);
\filldraw [black] (1.5,2) circle (2pt);
\filldraw [black] (2,2) circle (2pt);
\filldraw [black] (2.5,2) circle (2pt);
\filldraw [black] (3,2) circle (2pt);
\filldraw [black] (0,1.5) circle (2pt);
\filldraw [black] (0.5,1.5) circle (2pt);
\filldraw [black] (1,1.5) circle (2pt);
\filldraw [black] (0,1) circle (2pt);
\filldraw [black] (0.5,1) circle (2pt);
\filldraw [black] (0,0.5) circle (2pt);
\filldraw [black] (0,0) circle (2pt);
\node(a) at (1.5,-1.5) [label=$\lambda\equal (8\comma 8\comma 7\comma 3\comma 2\comma 1\comma 1)$]{};
\node(a)at(2.7,-2.25)[label=$\equiv \{3*5 \comma (3\comma3\comma2)\comma(3\comma2\comma1\comma1)\}$]{};
\node(a) at (1.5, 3.5) [label=If $n\equal2$\comma \hspace{0.5mm} then here $k\equal 3$]{};
\draw[red] (9.75,3.25)--(10.75,3.25);
\draw[red] (9.75,0.75)--(10.75,0.75);
\draw[red] (9.75,0.75)--(9.75,3.25);
\draw[red] (10.75,0.75)--(10.75,3.25);
\filldraw [black] (10,3) circle (2pt);
\filldraw [black] (10.5,3) circle (2pt);
\filldraw [black] (11,3) circle (2pt);
\filldraw [black] (11.5,3) circle (2pt);
\filldraw [black] (12,3) circle (2pt);
\filldraw [black] (12.5,3) circle (2pt);
\filldraw [black] (13,3) circle (2pt);
\filldraw [black] (13.5,3) circle (2pt);
\filldraw [black] (10,2.5) circle (2pt);
\filldraw [black] (10.5,2.5) circle (2pt);
\filldraw [black] (11,2.5) circle (2pt);
\filldraw [black] (11.5,2.5) circle (2pt);
\filldraw [black] (12,2.5) circle (2pt);
\filldraw [black] (12.5,2.5) circle (2pt);
\filldraw [black] (13,2.5) circle (2pt);
\filldraw [black] (13.5,2.5) circle (2pt);
\filldraw [black] (10,2) circle (2pt);
\filldraw [black] (10.5,2) circle (2pt);
\filldraw [black] (11,2) circle (2pt);
\filldraw [black] (11.5,2) circle (2pt);
\filldraw [black] (12,2) circle (2pt);
\filldraw [black] (12.5,2) circle (2pt);
\filldraw [black] (13,2) circle (2pt);
\filldraw [black] (10,1.5) circle (2pt);
\filldraw [black] (10.5,1.5) circle (2pt);
\filldraw [black] (11,1.5) circle (2pt);
\filldraw [black] (10,1) circle (2pt);
\filldraw [black] (10.5,1) circle (2pt);
\filldraw [black] (10,0.5) circle (2pt);
\filldraw [black] (10,0) circle (2pt);
\node(a) at (11.5,-1.5) [label=$\lambda\equal (8\comma 8\comma 7\comma 3\comma 2\comma 1\comma 1)$]{};
\node(a)at(12.5,-2.25)[label=$\equiv \{5*2 \comma (6\comma6\comma5\comma1)\comma(1\comma1)\}$]{};
\node(a) at (11.5, 3.5) [label=If $n\equal -3$\comma \hspace{0.5mm} then here $k\equal 5$]{};
\end{tikzpicture}
    \caption{An example of the Durfee rectangle for an integer partition, $\lambda$, of $30$ when $n=2$ and $n=-3$.}
    \label{fig: durfee rectangle}
\end{figure}
\vspace{5mm}\par By considering the distribution of a site under $\underline{\nu}^n$ (Lemma \ref{lem: site under nu n}) we proved the following identity.
\crankiden*
Combinatorially this identity arises by considering all integer partitions depending on whether their crank is greater than or less than $n$. Crank (or Dyson's crank) is a quantity of any integer partition that was first discussed by Dyson in 1944 \cite{dyson} and then was formally defined by Andrews and Garvan in 1988, \cite{andrews_garvan}. Given an integer partition $\lambda$ let $l(\lambda)$ denote the largest part of $\lambda$, $w(\lambda)$ denote the number of parts of size 1 in $\lambda$, and $\mu(\lambda)$ denote the number of parts of $\lambda$ that are larger than $w(\lambda)$. The the crank, $c(\lambda)$ is defined to be
$$c(\lambda)=\begin{cases}
    l(\lambda) &\text{if } w(\lambda)=0,\\
    \mu(\lambda)-w(\lambda) &\text{if } w(\lambda)>0.
\end{cases}$$
\begin{example}\label{eg: crank}~
    \begin{enumerate}[(a)]
        \item Consider the partition $\lambda_1=(7,5,5,2)$.
        \vspace{1mm}\begin{center}
            \begin{tikzpicture}[scale=0.7]
                \filldraw [black] (0,3) circle (2pt);
                \filldraw [black] (0.5,3) circle (2pt);
                \filldraw [black] (1,3) circle (2pt);
                \filldraw [black] (1.5,3) circle (2pt);
                \filldraw [black] (2,3) circle (2pt);
                \filldraw [black] (2.5,3) circle (2pt);
                \filldraw [black] (3,3) circle (2pt);
                \filldraw [black] (0,2.5) circle (2pt);
                \filldraw [black] (0.5,2.5) circle (2pt);
                \filldraw [black] (1,2.5) circle (2pt);
                \filldraw [black] (1.5,2.5) circle (2pt);
                \filldraw [black] (2,2.5) circle (2pt);
                \filldraw [black] (0,2) circle (2pt);
                \filldraw [black] (0.5,2) circle (2pt);
                \filldraw [black] (1,2) circle (2pt);
                \filldraw [black] (1.5,2) circle (2pt);
                \filldraw [black] (2,2) circle (2pt);
                \filldraw [black] (0,1.5) circle (2pt);
                \filldraw [black] (0.5,1.5) circle (2pt);
            \end{tikzpicture}
        \end{center}
        We have that,
            $$l(\lambda_1)=7, \quad \quad w(\lambda_1)=0, \quad \quad \text{and}, \quad \quad  \mu(\lambda_1)=4.$$
            Thus the crank of $\lambda_1$ is $c(\lambda_1)=7$.
        
        \vspace{2mm}\item Consider the partition $\lambda_2=(6,4,4,2,1,1)$.
        \vspace{1mm}\begin{center}
            \begin{tikzpicture}[scale=0.7]
                \filldraw [black] (0,3) circle (2pt);
                \filldraw [black] (0.5,3) circle (2pt);
                \filldraw [black] (1,3) circle (2pt);
                \filldraw [black] (1.5,3) circle (2pt);
                \filldraw [black] (2,3) circle (2pt);
                \filldraw [black] (2.5,3) circle (2pt);
                \filldraw [black] (0,2.5) circle (2pt);
                \filldraw [black] (0.5,2.5) circle (2pt);
                \filldraw [black] (1,2.5) circle (2pt);
                \filldraw [black] (1.5,2.5) circle (2pt);
                \filldraw [black] (0,2) circle (2pt);
                \filldraw [black] (0.5,2) circle (2pt);
                \filldraw [black] (1,2) circle (2pt);
                \filldraw [black] (1.5,2) circle (2pt);
                \filldraw [black] (0,1.5) circle (2pt);
                \filldraw [black] (0.5,1.5) circle (2pt);
                \filldraw [black] (0,1) circle (2pt);
                \filldraw [black] (0,0.5) circle (2pt);
            \end{tikzpicture}
        \end{center}
        We have that,
            $$l(\lambda_2)=6, \quad \quad w(\lambda_2)=2, \quad \quad \text{and}, \quad \quad  \mu(\lambda_2)=3.$$
            Thus the crank of $\lambda_2$ is $c(\lambda_2)=1$.
        
        \vspace{2mm}\item Consider the partition $\lambda_3=(5,3,2,1,1,1)$.
        \vspace{1mm}\begin{center}
            \begin{tikzpicture}[scale=0.7]
                \filldraw [black] (0,3) circle (2pt);
                \filldraw [black] (0.5,3) circle (2pt);
                \filldraw [black] (1,3) circle (2pt);
                \filldraw [black] (1.5,3) circle (2pt);
                \filldraw [black] (2,3) circle (2pt);
                \filldraw [black] (0,2.5) circle (2pt);
                \filldraw [black] (0.5,2.5) circle (2pt);
                \filldraw [black] (1,2.5) circle (2pt);
                \filldraw [black] (0,2) circle (2pt);
                \filldraw [black] (0.5,2) circle (2pt);
                \filldraw [black] (0,1.5) circle (2pt);
                \filldraw [black] (0,1) circle (2pt);
                \filldraw [black] (0,0.5) circle (2pt);
            \end{tikzpicture}
        \end{center}
        We have that,
            $$l(\lambda_3)=5, \quad \quad w(\lambda_3)=3, \quad \quad \text{and}, \quad \quad  \mu(\lambda_3)=1.$$
            Thus the crank of $\lambda_3$ is $c(\lambda_3)=-2$.
    \end{enumerate}
\end{example}
In \cite {andrews_garvan}, Andrews and Garvan give the generating function for integer partitions with a given crank value say $m\in\mathbb{Z}$. In 2022, Hopkins, Sellers and Yee, [\cite{hopkins_sellers}, Theorem 2.1], gave the generating function for integer partitions with crank$\geq n\geq 0$, by considering Durfee rectangles. This generating function is given by, 
$$\sum\limits_{k\geq 0} \frac{q^{(k+1)(n+k)}}{(q;q)_k(q;q)_{n+k}}.$$
Also it is well-known that integer partitions with crank$\leq -n$ are in one to one correspondence with integer partitions of crank $\geq n$ (for any positive $n$) and so they have the same generating functions.
\par Consider the identity in Proposition \ref{prop: identitiy related to durfee} for $n=1$, that is,
$$\sum\limits_{k=0}^\infty\frac{q^{(k+1)(k+1)}}{(q;q)_k(q;q)_{k+1}}+\sum\limits_{k=0}^\infty\frac{q^{(k+1)k}}{(q;q)_k(q;q)_k}=\frac{1}{(q;q)_\infty}.$$
We know that the right hand side is the generating function for all integer partitions. The left hand side is splitting integer partitions depending on whether they have positive or negative crank. In particular the first sum is the generating function for partitions with crank$\geq 1$ or equivalently $\leq -1$ and the second sum is the generating function for partitions with crank$\geq 0$.
\par The general statement for any $n\in\mathbb{Z}$ similarly splits integer partitions in terms of whether the crank is greater than or less than or equal to $n$. 
\vspace{5mm}\par By considering the distribution of the number number of particles to the left of some given site in ASEP (Theorem \ref{thm: half infinite asep particles}) we proved Euler's identity.
\euler*
Combinatorially this identity gives two ways of writing the 2-variable generating function for integer partitions into distinct parts. A general term is of the form $d_{n,k}q^nz^k$ where $d_{n,k}$ gives the number of partitions of $n$ into exactly $k$ distinct parts. The product on the LHS of the identity clearly counts partitions into distinct parts (with a $z$ in front of $q$ to count the number of non-zero parts). For the sum on the RHS, $k$ gives the number of parts and it is well known that $\frac{q^{\frac{k(k+1)}{2}}}{(q;q)_k}$ is the generating function for partitions into exactly $k$ distinct parts (see for example Section 2 of Cimpoea\c{s}'s 2022 paper \cite{cimp_exactly_distinct}). In Appendix \ref{appendix: combinatorics} we give an alternate proof of Theorem \ref{thm: half infinite asep particles} by considering particles states with $k$ particles to the left of some site $m$ as partitions of some $s$ (left jumps away from a ground state) into up to $k$ parts. Instead we can think of a state that has $k$ particles to the left of $m$ as a partition of some $n$ where each particle denotes a part of size how far this particle is from the boundary site $m$. Since there can only be one particle per site these distances must be distinct and thus we have a partition into exactly $k$ distinct parts. This explains why Euler's identity arises a consequence of Theorem \ref{thm: half infinite asep particles}.
\par\vspace{5mm} Similarly, by considering the distribution of the number of particles in between two given sites in ASEP (Lemma \ref{lem: finite asep particles}) we proved the $q$-Binomial Theorem.
\qbin*
Combinatorially this identity gives two ways of writing the 2-variable generating function for integer partitions into distinct parts of size at most $m$. It is known that the one-variable generating function for partitions into distinct parts is given by $\prod\limits_{i=1}^\infty(1+q^i)$, clearly if we truncate this product at $i=m$ we have distinct parts of size a most $m$ (see for example Equation 5.4 of Andrews and Eriksson \cite{integer_partitions}). If we then write $\prod\limits_{i=1}^m(1+zq^i)$ this is the two-variable generating function for these partitions with the power of $z$ counting the exact number of parts used. Now for the sum on the RHS, as before $k$ gives the number of parts. It is known that $\begin{bmatrix}
m\\
k
\end{bmatrix}_{q}$ is the generating function for partitions into up to $k$ part of size up to $m-k$ (for example see Section 7.2 of Andrews and Eriksson \cite{integer_partitions}). If we add a triangle of side length $k$ to the Young diagram of such a partition we now have a partition with distinct parts of size up to $m$, as desired. Note that adding this triangle is the same as multiplying the generating function by a factor of $q^{\frac{k(k+1)}{2}}$. We note that if we take $m$ to infinity in the $q$-Binomial Theorem we get Euler's Identity. In Appendix \ref{appendix: combinatorics} we give an alternative proof of Lemma \ref{lem: finite asep particles} by considering particle states with $0\leq k\leq m_2-m_1-1$ particles inside $(m_1,m_2)$ as partitions of some $s$ (left jumps away from a ground state) into up $k$ parts of size up to $m_2-m_1-1-k$. Instead we can think of the states as partitions into exactly $k$ parts with size up to $m\defeq m_2-m_1-1$ by letting each particle represent a part of size its distance away from the right boundary site $m_2$. Again we see that these distances must be distinct due to the exclusion rule and so the parts in the corresponding partition are distinct. This explains why the $q$-Binomial Theorem arises as a consequence of Lemma \ref{lem: finite asep particles}.

\section{Future directions and open questions}\label{open questions}
\par As we saw in Section \ref{subsect: identities}, natural questions for the particle system can lead to probabilistic proofs of well-known combinatorial identities. The authors are exploring further directions.
\par In the paper of Bal\'azs, Fretwell and Jay \cite{MDJ}, it is shown that states of blocking particle systems with $0$-$1$-\dots-$k$ (for $k\geq 2$) particles per site correspond to generalised Frobenius partitions (these are a generalisation of the notion of integer partitions). With this in mind, perhaps by considering the number of particles to the left of some site or within a finite range or symmetry between holes and particles for these more general blocking systems we would see GFP versions of the Euler, $q$-Binomial and Durfee rectangles identities.
\par Recently Amir, Bahadoran, Busani and Saada, \cite{amir_bahadoran_busani_saada}, characterised the blocking measures for asymmetric simple exclusion on multiple lanes. There they comment that it would be interesting to see what combinatorial identities can be found by studying equivalences between the multilane exclusion and other particle systems like in the work of Bal\'azs and Bowen, \cite{blocking}, and that of Bal\'azs, Fretwell and Jay, \cite{MDJ}. It is natural to ask what is the distribution of particles in half infinite and finite ranges in the multilane exclusion under the natural blocking measure. These distributions should also lead to interesting combinatorial identities as we have shown here for the single lane ASEP (Section \ref{subsect: identities}).
\par Both directions are subjects of upcoming papers.
\newpage\bibliographystyle{plain}
\bibliography{refsJJ}

@article{hopkins_sellers,
author="B. Hopkins and J. A. Sellers and A. J. Yee",
title="Combinatorial Perspectives on the Crank and Mex Partition Statistics.",
journal="The Electronic Journal of Combinatorics",
volume="29",
issue="2",
year=2022}

@book{liggett,
AUTHOR = {Liggett, T.M.},
     TITLE = {Interacting particle systems},
    SERIES = {Grundlehren der Mathematischen Wissenschaften [Fundamental
              Principles of Mathematical Sciences]},
    VOLUME = {276},
 PUBLISHER = {Springer-Verlag},
   ADDRESS = {New York},
      YEAR = {1985},
     PAGES = {xv+488},
      ISBN = {0-387-96069-4},
   MRCLASS = {60K35},
  MRNUMBER = {MR776231 (86e:60089)},
MRREVIEWER = {F.L. Spitzer},
}

@article{spitzer,
AUTHOR = {Spitzer, F.},
     TITLE = {Interaction of {M}arkov processes},
   JOURNAL = {Advances in Math.},
  FJOURNAL = {Advances in Mathematics},
    VOLUME = {5},
      YEAR = {1970},
     PAGES = {246--290 (1970)},
      ISSN = {0001-8708},
   MRCLASS = {60.60},
  MRNUMBER = {MR0268959 (42 \#3856)},
MRREVIEWER = {B. Weiss},
}

@article{ligg_coupling,
author="T.M. Liggett",
title="Coupling the simple exclusion process",
journal="Ann. Probab.",
volume="4",
pages="339-356",
MRNUMBER = {0418291},
year=1976}

@article{bufetov_chen,
author="A. Bufetov and K. Chen",
title="Mallows product measure",
journal="Electron. J. Probab",
volume="29",
pages="1-33",
DOI = {10.1214/24-EJP1211},
year=2024}

@article{gnedin_olshanski,
author= "A. Gnedin and G. Olshanski",
title="The two-sided infinite extension of the Mallows model for random permutations",
journal="Advances in Applied Mathematics",
volume="48",
pages="615-639",
year=2012}

@article{gnedin_olshanski_2010,
author="A. Gnedin and G. Olshanski",
title="$q$-exchangeability via quasi-invariance",
journal="The Annals of Probability",
volume="38",
pages="2103-2135",
year=2010}

@article{jay_lees_ising,
author="J. Jay and B. Lees",
title="Combinatorial identities from an inhomogeneous Ising chain",
journal="",
volume="",
pages="",
year=2024,
NOTE = {arXiv:2401.16311}}

@article{bufetov_nejjar,
author= "A. Bufetov and P. Nejjar",
title="Cutoff profile of ASEP on a segment",
journal="Probability Theory and Related Fields",
volume="183",
pages="229-253",
year=2022}

@article{dyson,
author="F.J. Dyson",
title="Some guesses in the theory of paritions",
journal="Eureka (Cambridge)",
volume="8",
pages="10-15",
year=1944}

@article{andrews_garvan,
author="G.E. Andrews and F.G. Garvan",
title="Dyson's crank of a partition",
journal="Bulletin of the American Mathematical Society",
volume="18",
pages="167-171",
DOI = {10.1090/S0273-0979-1988-15637-6},
year=1988}

@article{amir_bahadoran_busani_saada,
author="G. Amir and C. Bahadoran and O. Busani and E. Saada",
title="Invariant measures for multilane exclusion process",
journal="Ann. Inst. H. Poincaré Probab. Statist.",
volume="61(3)",
pages="2184-2234",
DOI = {10.1214/24-AIHP1482},
year=2025}

@article{and,
AUTHOR = {Andjel, E.D.},
     TITLE = {Invariant measures for the zero range processes},
   JOURNAL = {Ann. Probab.},
  FJOURNAL = {The Annals of Probability},
    VOLUME = {10},
      YEAR = {1982},
    NUMBER = {3},
     PAGES = {525--547},
      ISSN = {0091-1798},
     CODEN = {APBYAE},
   MRCLASS = {60K35 (82A05)},
  MRNUMBER = {MR659526 (83j:60106)},
MRREVIEWER = {T.M. Liggett},
}

@article{fer_kip_saa_coupling,
author="P.A. Ferrari and C. Kipnis and E. Saada",
title="Microscopic structure of travelling waves in the asymmetric simple exclusion process",
journal="Ann. Prob.",
volume="19",
number="1",
pages="226-244",
MRNUMBER = {1085334},
year="1991"}

@article{bal_sepp_annals,
AUTHOR = {M. Bal{\'a}zs and Sepp{\"a}l{\"a}inen, T.},
TITLE = {Order of current variance and diffusivity in the asymmetric simple exclusion process},
JOURNAL = {Ann. Math.},
FJOURNAL = {Annals of Mathematics. Second Series},
VOLUME = {171},
YEAR = {2010},
NUMBER = {2},
PAGES = {1237--1265},
ISSN = {0003-486X},
CODEN = {ANMAAH},
MRCLASS = {60K35 (60J25)},
MRNUMBER = {2630064},
MRREVIEWER = {Daniel Remenik},
DOI = {10.4007/annals.2010.171.1237},
URL = {http://dx.doi.org/10.4007/annals.2010.171.1237},
}

@article {angel_2_species,
    AUTHOR = "O. Angel",
     TITLE = "The stationary measure of a 2-type totally asymmetric
              exclusion process",
   JOURNAL = "J. Combin. Theory Ser. A",
  FJOURNAL = "Journal of Combinatorial Theory. Series A",
    VOLUME = "113",
      YEAR = "2006",
    NUMBER = "4",
     PAGES = "625--635",
      ISSN = "0097-3165",
     CODEN = "JCBTA7",
   MRCLASS = "82C22 (60K35)",
  MRNUMBER = "MR2216456 (2007e:82029)",
MRREVIEWER = "Ellen Saada",
}

@article{ferr_mart_multi_species,
author="P.A. Ferrari and J. Martin",
title="Stationary distributions of multi-type totally asymmetric exclusion processes",
journal="Ann. Probab.",
volume="35",
number="3",
pages="807-832",
year="2007"}

@article{bal_sepp_coupling,
AUTHOR = {M. Bal{\'a}zs and Sepp{\"a}l{\"a}inen, T.},
TITLE = {Fluctuation bounds for the asymmetric simple exclusion process},
JOURNAL = {ALEA},
FJOURNAL = {ALEA. Latin American Journal of Probability and Mathematical Statistics},
VOLUME = {6},
YEAR = {2009},
PAGES = {1--24},
ISSN = {1980-0436},
MRCLASS = {60K35},
MRNUMBER = {2485877},
MRREVIEWER = {Daniel Remenik},
}

@article{tracy-widom,
AUTHOR = {Tracy, C. A. and Widom, H.},
TITLE = {On the distribution of a second-class particle in the asymmetric simple exclusion process},
JOURNAL = {J. Phys. A},
FJOURNAL = {Journal of Physics. A. Mathematical and Theoretical},
VOLUME = {42},
YEAR = {2009},
NUMBER = {42},
PAGES = {425002, 6},
ISSN = {1751-8113},
MRCLASS = {60K35 (82C22)},
MRNUMBER = {2546037 (2011c:60322)},
MRREVIEWER = {P.A. Ferrari},
DOI = {10.1088/1751-8113/42/42/425002},
URL = {http://dx.doi.org/10.1088/1751-8113/42/42/425002},
}

@article {Andrews_jacobi,
AUTHOR = {Andrews, G.E.},
TITLE = {A simple proof of {Jacobi}'s triple product identity},
JOURNAL = {Proc. Amer. Math. Soc.},
FJOURNAL = {Proceedings of the American Mathematical Society},
VOLUME = {16},
YEAR = {1965},
PAGES = {333--334},
ISSN = {0002-9939},
MRCLASS = {05.10},
MRNUMBER = {0171725},
MRREVIEWER = {L. Carlitz},
}

@article{blocking,
 author = "Bal{\'a}zs, M. and Bowen, R.",
 doi = "10.1214/16-AIHP813",
 fjournal = "Annales de l'Institut Henri Poincar{\'e}, Probabilit{\'e}s et Statistiques",
 journal = "Ann. Inst. H. Poincar{\'e} Prob. Stat.",
 month = "02",
 number = "1",
 pages = "514--528",
 publisher = "Institut Henri Poincar{\'e}",
 title = "Product blocking measures and a particle system proof of the Jacobi triple product",
 url = "https://doi.org/10.1214/16-AIHP813",
 volume = "54",
 year = "2018"
}

@article{MDJ,
 author="Bal{\'a}zs, M. and Fretwell, D. and Jay, J.",
 title="Interacting Particle Systems and {J}acobi style identities",
 journal="Research in the Mathematical Sciences",
 pages="48",
 volume="9",
 doi = {10.1007/s40687-022-00342-2},
 URL = {https://doi.org/10.1007/s40687-022-00342-2},
 year="2022"
}

@article{fan_sepp_joint_bus,
author = {W. Fan and T. Sepp{\"a}l{\"a}inen},
title = {Joint distribution of Busemann functions in the exactly solvable corner growth model},
volume = {1},
journal = {Probability and Mathematical Physics},
number = {1},
publisher = {Mathematical Sciences Publishers},
pages = {55 -- 100},
year = {2020},
doi = {10.2140/pmp.2020.1.55}
}

@book{andrews_q_bin,
 author={Andrews, G.E.},
 title={$q$-Series: Their Development and Application in Analysis, Number Theory, Combinatorics, Physics, and Computer Algebra},
 publisher={Amer. Math. Soc.},
 year={1986}
}

@book{integer_partitions,
 author={Andrews, G.E. and Eriksson,K.},
 title={Integer Partitions},
 publisher={Cambridge University Press},
 year={2004}
}

@article{discrete_logistic,
 author = {S. Chakraborty and D. Chakravarty},
 title = {A new discrete probability distribution with integer support on $(-\infty,\,\infty)$},
 journal = {Communications in Statistics - Theory and Methods},
 volume = {45},
 number = {2},
 pages = {492-505},
 year  = {2016},
 publisher = {Taylor & Francis},
 doi = {10.1080/03610926.2013.830743},
 URL = {https://doi.org/10.1080/03610926.2013.830743},
 eprint = {https://doi.org/10.1080/03610926.2013.830743}
}

@article{cimp_exactly_distinct,
 author={M. Cimpoea{\c s}},
 title={A note on the number of partitions of $n$ into $k$ parts},
 journal={U.P.B. Sci. Bull., Series A},
 volume={84},
 issue={4},
 pages={131-138},
 year={2022}
}

@article{Ges_durfee_rectangle,
 title = {Some generalized Durfee square identities},
 journal = {Discrete Mathematics},
 volume = {49},
 number = {1},
 pages = {41-44},
 year = {1984},
 issn = {0012-365X},
 doi = {https://doi.org/10.1016/0012-365X(84)90149-3},
 url = {https://www.sciencedirect.com/science/article/pii/0012365X84901493},
 author = {I. M. Gessel},
 abstract = {A generalized q-binomial Vandermonde convolution of Sulanke is proved using a generalization of the Durfee square of a partition.}
}

@article{heine,
 url = {https://doi.org/10.1515/crll.1847.34.285},
 title = {Untersuchungen {\"u}ber die Reihe.},
 author = {E. Heine},
 pages = {285--328},
 volume = {1847},
 number = {34},
 journal = {J. reine angew. Math.},
 doi = {doi:10.1515/crll.1847.34.285},
 year = {1847},
 lastchecked = {2023-05-23}
}

@article{belitski_schutz_n_species_asep,
 title = {Self-duality and shock dynamics in the n-species priority {ASEP}},
 author = {V. Belitsky and G.M. Sch\"utz},
 pages = {1165--1207},
 volume = {128},
 number = {4},
 journal = {Stochastic Processes and their Applications},
 doi = {},
 year = {2018},
 lastchecked = {}
}
\newpage \appendix
\section{Simple $q$-Pochhammer manipulations}
\begin{lem}\label{lem: pochammer relation for nu^n calcs}
    For any $m\in\mathbb{Z}$ and $c\in\mathbb{R}$,
    $$\frac{q^{\frac{m(m+1)}{2}-mc}(-q^{1+m-c};q)_\infty(-q^{c-m};q)_\infty}{(-q^{1-c};q)_\infty(-q^{c};q)_\infty}=1.$$ 
\end{lem}

\begin{proof}
   Clearly this holds for $m=0$, so let's consider the cases when $m>0$ and $m<0$ separately. 
   \par\noindent Firstly when $m>0$ we have that,
    \begin{align*}
        \frac{q^{\frac{m(m+1)}{2}-mc}(-q^{1+m-c};q)_\infty(-q^{c-m};q)_\infty}{(-q^{1-c};q)_\infty(-q^{c};q)_\infty}&= q^{\frac{m(m+1)}{2}-mc}\cdot\frac{\prod\limits_{i=-m}^{-1}(1+q^{i+c})}{\prod\limits_{i=1}^m(1+q^{i-c})}\\
        &=q^{\frac{m(m+1)}{2}-mc}\cdot\frac{\prod\limits_{i=1}^{m}q^{c-i}(1+q^{i-c})}{\prod\limits_{i=1}^m(1+q^{i-c})}=1.
    \end{align*}
    Similarly when $m<0$. We suppose $m=-l$ for some $l\in\mathbb{Z}_{>0}$ and so,
    \begin{align*}
        \frac{q^{\frac{-l(-l+1)}{2}+lc}(-q^{1-l-c};q)_\infty(-q^{c+l};q)_\infty}{(-q^{1-c};q)_\infty(-q^{c};q)_\infty}&=q^{\frac{-l(-l+1)}{2}+lc}\cdot \frac{\prod\limits_{i=-l+1}^0(1+q^{i-c})}{\prod\limits_{i=0}^{l-1}(1+q^{i+c})}\\
        &=q^{\frac{(l-1)l}{2}+lc}\cdot\frac{\prod\limits_{i=0}^{l-1}q^{-i-c}(1+q^{i+c})}{\prod\limits_{i=0}^{l-1}(1+q^{i+c})}=1.
    \end{align*} 
\end{proof}
\begin{lem}\emph{\textbf{$q$-Binomial analogue of Pascal's identity} (e.g.\ (7.1) in Andrews and Erikkson \cite{integer_partitions})}\label{lem: q-pascal}
\par For any $m\in\mathbb Z_{>0}$ and $k\in\{0,\dots,m\}$,
$$\begin{bmatrix}
    m\\
    k\\
\end{bmatrix}_q=q^k\begin{bmatrix}
    m-1\\k
\end{bmatrix}_q+\begin{bmatrix}
    m-1\\
    k-1
\end{bmatrix}_q.$$    
\end{lem}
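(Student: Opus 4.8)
The plan is to prove the identity by direct algebraic manipulation of the product definition of the $q$-Binomial coefficient, reducing both terms on the right to a common denominator and simplifying. First I would dispose of the two boundary cases, where the standard convention that a $q$-Binomial coefficient with negative lower entry (or with lower entry exceeding the upper) equals zero is needed. When $k=0$ the term $\begin{bmatrix} m-1\\ k-1 \end{bmatrix}_q$ vanishes and the identity reduces to $1 = q^0\cdot 1$; when $k=m$ the term $\begin{bmatrix} m-1\\ k \end{bmatrix}_q$ vanishes and we are left with $1=1$. So it suffices to treat $1\le k\le m-1$, where all three coefficients are genuine ratios of $q$-Pochhammer symbols.

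For the main case I would start from the right-hand side and bring both terms over the common denominator $(q;q)_k(q;q)_{m-k}$. The key tool is the pair of one-step recursions $(q;q)_k=(q;q)_{k-1}(1-q^k)$ and $(q;q)_{m-k}=(q;q)_{m-1-k}(1-q^{m-k})$, which allow me to rewrite $1/(q;q)_{m-1-k}$ in the first term and $1/(q;q)_{k-1}$ in the second so that both denominators become $(q;q)_k(q;q)_{m-k}$. After factoring out $(q;q)_{m-1}/\big((q;q)_k(q;q)_{m-k}\big)$, the right-hand side equals this prefactor times the bracket $q^k(1-q^{m-k})+(1-q^k)$.

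The computation then closes because the bracket collapses: $q^k(1-q^{m-k})+(1-q^k)=q^k-q^m+1-q^k=1-q^m$. Applying $(q;q)_m=(q;q)_{m-1}(1-q^m)$ one final time recovers exactly $(q;q)_m/\big((q;q)_k(q;q)_{m-k}\big)=\begin{bmatrix} m\\ k \end{bmatrix}_q$, which is the left-hand side. There is no substantive obstacle; the only point requiring care is keeping the power $q^k$ attached to the correct summand so that the cancellation producing $1-q^m$ is clean, together with the boundary bookkeeping noted above. (One could alternatively give a combinatorial proof by reading $\begin{bmatrix} m\\ k \end{bmatrix}_q$ as the generating function for partitions fitting inside a $k\times(m-k)$ box and conditioning on whether the largest part attains its maximal value, but the algebraic route is shorter and self-contained.)
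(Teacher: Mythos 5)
Your proposal is correct and follows essentially the same route as the paper: both terms on the right are put over the common denominator $(q;q)_k(q;q)_{m-k}$ via the one-step Pochhammer recursions, the bracket collapses to $1-q^m$, and one final application of $(q;q)_m=(q;q)_{m-1}(1-q^m)$ finishes the computation. Your explicit treatment of the boundary cases $k=0$ and $k=m$ is a small but welcome addition that the paper's proof leaves implicit.
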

\begin{proof}
 Let us consider,
 \begin{align*}
    q^k\begin{bmatrix}
    m-1\\k
\end{bmatrix}_q+\begin{bmatrix}
    m-1\\
    k-1
\end{bmatrix}_q&= q^k\frac{(q;q)_{m-1}}{(q;q)_k(q;q)_{m-1-k}}+\frac{(q;q)_{m-1}}{(q;q)_{k-1}(q;q)_{m-k}} \\
&=q^k\frac{(q;q)_{m-1}(1-q^{m-k})}{(q;q)_k(q;q)_{m-k}}+\frac{(q;q)_{m-1}(1-q^k)}{(q;q)_k(q;q)_{m-k}}\\
&=\frac{(q;q)_{m-1}(1-q^m)}{(q;q)_k(q;q)_{m-k}}=\frac{(q;q)_m}{(q;q)_k(q;q)_{m-k}}=\begin{bmatrix}
    m\\
    k
\end{bmatrix}_q.
 \end{align*}
\end{proof}
\vspace{-3mm}\begin{lem}\label{lem: pochammer relation}
For any $k\in\mathbb Z_{\ge0}$,
     $$(q^{-k};q)_k=(q^{-1};q^{-1})_k=\frac{(q;q)_k}{(-1)^kq^{\frac{k(k+1)}{2}}}. $$  
\end{lem}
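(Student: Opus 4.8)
The plan is to reduce everything to the defining product $(a;q)_n=\prod_{i=0}^{n-1}(1-aq^i)$ and then perform a single index manipulation. First I would expand the left-hand side directly: $(q^{-k};q)_k=\prod_{i=0}^{k-1}(1-q^{-k}q^i)=\prod_{i=0}^{k-1}(1-q^{i-k})$. Substituting $j=k-i$ (so that $j$ runs over $1,\dots,k$ as $i$ runs over $0,\dots,k-1$, and $i-k=-j$) turns this into $\prod_{j=1}^{k}(1-q^{-j})$. The same defining product applied to the middle expression gives $(q^{-1};q^{-1})_k=\prod_{i=0}^{k-1}(1-q^{-1}(q^{-1})^i)=\prod_{i=0}^{k-1}(1-q^{-1-i})=\prod_{j=1}^{k}(1-q^{-j})$, which is literally the same product. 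This establishes the first equality $(q^{-k};q)_k=(q^{-1};q^{-1})_k$ with essentially no work.

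For the second equality I would factor $-q^{-j}$ out of each term in $\prod_{j=1}^{k}(1-q^{-j})$: since $1-q^{-j}=-q^{-j}(1-q^{j})$, the product becomes $\prod_{j=1}^{k}(-q^{-j})(1-q^j)=(-1)^k\,q^{-\sum_{j=1}^k j}\prod_{j=1}^{k}(1-q^j)$. Recognising $\sum_{j=1}^k j=\tfrac{k(k+1)}{2}$ and $\prod_{j=1}^{k}(1-q^j)=(q;q)_k$ yields $(-1)^k q^{-\frac{k(k+1)}{2}}(q;q)_k$, which is exactly $\dfrac{(q;q)_k}{(-1)^k q^{\frac{k(k+1)}{2}}}$ because $(-1)^{-k}=(-1)^k$.

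There is no genuine obstacle here; the only thing to watch is the bookkeeping of the index reversal $j=k-i$ and the sign and power of $q$ extracted from each factor. I would also dispose of the degenerate case $k=0$ at the start, where all three expressions are empty products equal to $1$ and the identity holds trivially.
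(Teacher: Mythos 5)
Your proposal is correct and follows essentially the same route as the paper's proof: both reduce the first equality to the index reversal $j=k-i$ in the defining product, and both obtain the second equality by extracting $-q^{-j}$ from each factor $1-q^{-j}$ to produce $(-1)^k q^{-k(k+1)/2}(q;q)_k$. Your explicit treatment of the $k=0$ empty-product case is a small tidy addition but does not change the argument.
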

\begin{proof}
  We have that, 
    $$(q^{-k};q)_k=\prod\limits_{j=0}^{k-1}(1-q^{-(k-j)})=\prod\limits_{i=1}^k(1-q^{-i})=(q^{-1};q^{-1})_k.$$
    $$(q^{-1};q^{-1})_k=\prod\limits_{i=1}^k(1-q^{-i})=(1-q^{-1})(1-q^{-2})\cdots(1-q^{-k})=q^{-\sum\limits_{i=1}^k i}\prod\limits_{j=1}^k(q^i-1)= \frac{(q;q)_k}{(-1)^kq^{\frac{k(k+1)}{2}}}.$$   
\end{proof}
\begin{lem}\label{lem: m_j pochammers}
For any $d\in\mathbb Z_{>0}$, any $m_1<m_2<...<m_d \in \mathbb Z$, any $c\in\mathbb{R}$, and for each $j \in \{2,..,d\}$,
$$\frac{(-q^{c+d+2-j-m_j};q)_{\hat{m}_j}}{(-q^{c+d-j-m_j};q)_\infty}=\frac{1}{(1+q^{c+d-j-m_j})(1+q^{c+d+1-j-m_j})(-q^{c+d-(j-1)-m_{j-1}};q)_\infty}$$
where $\hat{m}_j\defeq m_j-m_{j-1}-1$
\end{lem}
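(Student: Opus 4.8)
The plan is to prove this purely algebraic identity by expanding every $q$-Pochhammer symbol as an explicit product and cancelling common factors. To keep the bookkeeping clean I would first abbreviate $a \defeq q^{c+d-j-m_j}$, so that the quantities entering the statement become $q^{c+d+2-j-m_j} = aq^2$, $q^{c+d-j-m_j} = a$, and the factor $1+q^{c+d+1-j-m_j}$ on the right-hand side becomes $1+aq$.

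First I would expand the two factors on the left using $(x;q)_n = \prod_{i=0}^{n-1}(1-xq^i)$, giving
$$(-aq^2;q)_{\hat{m}_j} = \prod_{i=2}^{\hat{m}_j+1}(1+aq^i), \qquad (-a;q)_\infty = \prod_{i=0}^\infty(1+aq^i).$$
Every factor $(1+aq^i)$ with $2 \le i \le \hat{m}_j+1$ occurs in both products and cancels, leaving
$$\frac{(-aq^2;q)_{\hat{m}_j}}{(-a;q)_\infty} = \frac{1}{(1+a)(1+aq)\prod_{i=\hat{m}_j+2}^\infty(1+aq^i)}.$$
The factors $(1+a)$ and $(1+aq)$ are exactly the first two factors on the right-hand side, so it remains only to identify the tail product with $(-q^{c+d-(j-1)-m_{j-1}};q)_\infty$.

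The final step is a re-indexing of this tail. Setting $\ell = i-(\hat{m}_j+2)$ rewrites it as $\prod_{\ell=0}^\infty\bigl(1+aq^{\,\ell+\hat{m}_j+2}\bigr)$, and substituting $a = q^{c+d-j-m_j}$ together with the defining relation $\hat{m}_j = m_j-m_{j-1}-1$ collapses the exponent of $q$ to $c+d-(j-1)-m_{j-1}+\ell$. Hence the tail is precisely $(-q^{c+d-(j-1)-m_{j-1}};q)_\infty$, which establishes the claimed equality. There is no genuine obstacle beyond careful index management; the one point deserving attention is checking that the re-indexed exponent simplifies to exactly $c+d-(j-1)-m_{j-1}+\ell$, which is where the relation $\hat{m}_j = m_j-m_{j-1}-1$ is used.
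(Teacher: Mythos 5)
Your proof is correct and follows essentially the same route as the paper's: expand both $q$-Pochhammer symbols as explicit products, cancel the common factors, and re-index the tail using $\hat{m}_j=m_j-m_{j-1}-1$ to recognise it as $(-q^{c+d-(j-1)-m_{j-1}};q)_\infty$. The substitution $a=q^{c+d-j-m_j}$ is merely a notational convenience and does not change the argument.
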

\begin{proof}
\vspace{-1mm}\begin{align*}
 \frac{(-q^{c+d+2-j-m_j};q)_{\hat{m}_j}}{(-q^{c+d-j-m_j};q)_\infty}&=\frac{\prod\limits_{i=1}^{\hat{m}_j}(1+q^{c+d+1-j-m_j+i})}{\prod\limits_{i=0}^\infty(1+q^{c+d-j-m_j+i})}\\
 &=\frac{\prod\limits_{i=1}^{\hat{m}_j}(1+q^{c+d+1-j-m_j+i})}{(1+q^{c+d-j-m_j})(1+q^{c+d+1-j-m_j})\prod\limits_{i=1}^\infty(1+q^{c+d+1-j-m_j+i})}\\
 &=\frac{1}{(1+q^{c+d-j-m_j})(1+q^{c+d+1-j-m_j})\prod\limits_{i=\hat{m}_j+1}^\infty(1+q^{c+d+1-j-m_j+i})}\\
 &=\frac{1}{(1+q^{c+d-j-m_j})(1+q^{c+d+1-j-m_j})\prod\limits_{i=0}^\infty(1+q^{c+d+1-j-m_{j-1}+i})}\\
 &=\frac{1}{(1+q^{c+d-j-m_j})(1+q^{c+d+1-j-m_j})(-q^{c+d-(j-1)-m_{j-1}};q)_\infty}
\end{align*}\vspace{-2mm}
\end{proof}
\section{Combinatorial Arguments}\label{appendix: combinatorics}
This section is included for completeness. Notice that no other parts of the paper rely on these arguments. 
\begin{proof}[Alternative proof of Theorem \ref{thm: half infinite asep particles}]~
\par
We notice that, any state $\underline{z}$ such that $N_{m+\frac{1}{2}}^{\overset{\leftarrow}{p}}(\underline{z})=k$ is some $s$ number of left jumps away from the state $\tilde{\underline{\omega}}$, which is defined on the sites $(-\infty,m]$ as, 
$$\tilde{\omega}_i=\begin{cases}
 1 &\text{ if } i \in \{m-k+1,...,m\}\\
 0 &\text{ o/w. }
\end{cases}$$
 We see that a left particle jump changes the probability of a state, under $\underline{\mu}^c$, by a factor of $q$,
$$\underline{\mu}^c(\underline{z}^{(j,j-1)})=\prod\limits_{i\in\mathbb{Z}}\frac{q^{-(i-c)z_i^{(j,j-1)}}}{1+q^{-(i-c)}}=q^{j-c}\cdot q^{-(j-1-c)}\cdot\underline{\mu}^c(\underline{z})=q\cdot\underline{\mu}^c(\underline{z}).$$
 \noindent Thus we have that, 
$$\underline{\mu}^c(\{N_{m+\frac{1}{2}}^{\overset{\leftarrow}{p}}(\underline{z})=k\})=\underline{\mu}^c|_{(-\infty,m]}(\tilde{\underline{\omega}})\sum\limits_{s=0}^\infty q^s \hat{g}(k,s),$$
where $\hat{g}(k,s)$ counts the number of states on $(-\infty,m]$ that are $s$ left jumps away from $\underline{
\tilde{\omega}}$. A state that is $s$ left jumps away from $\underline{\tilde{\omega}}$ can be viewed as a partition of $s$ where the parts are the number of left jumps each particle is away from its position in $\underline{\tilde{\omega}}$. So the partitions that these states correspond to are partitions of $s$ with up to $k$ parts. Then $\sum\limits_{s=0}^\infty q^s \hat{g}(k,s)$ is the generating function for such partitions, by conjugation of partitions we know that $\sum\limits_{s=0}^\infty q^s \hat{g}(k,s)=\frac{1}{(q;q)_k}$ (for example see Section 6.2 of Andrews and Eriksson \cite{integer_partitions}). Now we find, 
$$\underline{\mu}^c|_{(-\infty,m]}(\tilde{\underline{\omega}}) = \prod\limits_{i=-\infty}^m \mu^c_i(\tilde{\omega}_i)=\frac{\prod\limits_{i=m-k+1}^mq^{c-i}}{\prod\limits_{i=-\infty}^m(1+q^{c-i})}=\frac{q^{k(c-m)+\frac{k(k-1)}{2}}}{\prod\limits_{i=0}^\infty(1+q^{c-m+i})}=\frac{q^{k(c-m)+\frac{k(k-1)}{2}}}{(-q^{c-m};q)_{\infty}}.$$

Putting this together we have that, 
$$\underline{\mu}^c(\{N_{m+\frac{1}{2}}^{\overset{\leftarrow}{p}}(\underline{z})=k\})=\frac{q^{k(c-m)+\frac{k(k-1)}{2}}}{(q;q)_k(-q^{c-m};q)_\infty}.$$
\end{proof}
\begin{proof}[Alternative proof of Lemma \ref{lem: finite asep particles}]~
\par
\par \noindent For some $k \in \{0,...,\hat{m}_2\}$ define a state $\underline{\omega}$ on $[m_1+1,m_2-1]$ such that,
$$\omega_i=\begin{cases}
1 &\text{for } i \in \{m_2-k,...,m_2-1\}\\
0 &\text{for } i \in \{m_1+1,...,m_2-k-1\}.
\end{cases}$$
We see that for any state $\underline{z}$ defined on $[m_1+1,m_2-1]$ there is some minimal number of right particle jumps needed to get to $\underline{\omega}$; that is $\underline{z}$ is some $s$ left jumps away from $\underline{\omega}$.
Thus, since a left jump changes the measure by a factor of $q$,
$$\underline{\mu}^c(\{N^{\overset{\leftarrow}{p}}_{m_2-1+\frac{1}{2}}(\underline{z})-N^{\overset{\leftarrow}{p}}_{m_1+\frac{1}{2}}(\underline{z})=k\})=\underline{\mu}^c|_{[m_1+1,m_2-1]}(\underline{\omega})\sum\limits_{s=0}^{k(\hat{m}_2-k)}q^sg(k,s)$$
where $g(k,s)$ counts the number of states on $[m_1+1,m_2-1]$ that are $s$ jumps away from $\underline{\omega}$. The probability under $\underline{\mu}^c|_{[m_1+1,m_2-1]}$ of the state $\underline{\omega}$ is, 
$$
\underline{\mu}^c|_{[m_1+1,m_2-1]}(\underline{\omega})=\prod\limits_{i=m_1+1}^{m_2-1}\mu^c_i(\omega_i)=\frac{\prod\limits_{i=m_2-k}^{m_2-1}q^{c-i}}{\prod\limits_{i=m_1+1}^{m_2-1}(1+q^{c-i})}=\frac{q^{k(c+1-m_2)+\sum\limits_{i=1}^{k-1} i}}{\prod\limits_{i=1}^{\hat{m}_2}(1+q^{c-m_2+i})}=\frac{q^{k(c+1-m_2)+\frac{k(k-1)}{2}}}{(-q^{c-m_2+1};q)_{\hat{m}_2}}.
$$
 A state that is $s$ left jumps away from $\underline{\omega}$ can be viewed as a partition of $s$ where the parts are the number of left jumps each particle is away from its position in $\underline{\omega}$. So the partitions which these states correspond to are partitions of $s$ with up to $k$ parts of size up to $\hat{m}_2-k$. Hence $\sum\limits_{s=0}^{k(\hat{m}_2-k)}q^sg(k,s)$ is the generating function for these such partitions which can be written in the following way (for example see Section 7.2 of Andrews and Eriksson \cite{integer_partitions}),
$$\sum\limits_{s=0}^{k(\hat{m}_2-k)}q^sg(k,s)=\begin{bmatrix}
    \hat{m}_2 \\
    k
    \end{bmatrix}_{q} = \frac{(q;q)_{\hat{m}_2}}{(q;q)_k(q;q)_{\hat{m}_2-k}}
$$
Putting all this together we find that, $$\underline{\mu}^c(\{N^{\overset{\leftarrow}{p}}_{m_2-1+\frac{1}{2}}(\underline{z})-N^{\overset{\leftarrow}{p}}_{m_1+\frac{1}{2}}(\underline{z})=k\})=\frac{q^{k(c+1-m_2)+\frac{k(k-1)}{2}}(q;q)_{\hat{m}_2}}{(-q^{c-m_2+1};q)_{\hat{m}_2}(q;q)_k(q;q)_{\hat{m}_2-k}}$$
\end{proof}
\end{document}